\documentclass[11pt, leqno, a4paper,twoside]{amsart} 
\usepackage{lipsum}
\usepackage{amsfonts}
\usepackage{graphicx}
\usepackage{epstopdf}
\usepackage{algorithmic}
\usepackage{calligra}
\usepackage{amsfonts,amsmath,amsthm,amssymb}
\usepackage{mathtools}
\usepackage{hyperref}
\usepackage{autonum}
\usepackage{hhline}
\usepackage{array}
\usepackage{diagbox}
\usepackage{tcolorbox}
\usepackage{mdframed}
\usepackage{multicol}
\usepackage{graphicx}
\usepackage{subcaption}
\usepackage{moreverb}
\usepackage{bbm}
\usepackage[margin=1.38in]{geometry}
\usepackage{todonotes}
\usepackage{scalerel,amssymb}
\allowdisplaybreaks
\usepackage{mathrsfs}  
\usepackage{lineno}
\usepackage{todonotes}
\usepackage{tikz}
\usepackage{pgfplots}
\usetikzlibrary{arrows.meta}
\usepackage{siunitx}
\usepackage[numbers,sort&compress]{natbib}
\definecolor{mygreen}{HTML}{43a047}
\usepackage{subcaption}
\usepackage{doi}
\usepackage{alphalph}
\usepackage{booktabs}
\usepackage{makecell}
\def\dmp{d}

\def\opsi{\overline{\psi}}

\newcommand{\Om}{\Omega}
\newcommand{\D}{\Delta}
\newcommand{\pn}{\psi^n}

\newcommand{\kernel}{\mathfrak{k}}
\newcommand{\kerML}{\mathfrak{k}}
\newcommand{\kersing}{\mathfrak{g}}
\newcommand{\Dt}{\textup{D}_t} 
\newcommand{\Dtal}{{\textup{D}}_t^{2-\alpha}}
\def\elltil{\tilde{\ell}}

\newcommand{\pt}{\psi_t}
\newcommand{\ptn}{\psi_t^n}
\newcommand{\ptt}{\psi_{tt}}
\newcommand{\pttn}{\psi_{tt}^n}
\newcommand{\ptttn}{\psi_{ttt}^n}
\newcommand{\pttt}{\psi_{ttt}}
\newcommand{\ddt}{\frac{\textup{d}}{\textup{d}t}}
\newcommand{\Igamma}{{\textup{I}}^\gamma}
\newcommand{\Is}{{\textup{I}}^s}
\newcommand{\Igammas}{{\textup{I}}^{\gamma+s}}
\newcommand{\Ione}{{\textup{I}}^{1}}
\newcommand{\Ialpha}{{\textup{I}}^\alpha}
\newcommand{\Italpha}{{\textup{I}}^{2\alpha}}
\newcommand{\Dal}{{\textup{D}}_t^\alpha}
\newcommand{\Doal}{{\textup{D}}_t^{1-\alpha}}
\newcommand{\bmu}{\boldsymbol{\mu}}
\newcommand{\bxi}{\boldsymbol{\xi}}
\newcommand{\dt}{\, \textup{d} t}
\newcommand{\ds}{\, \textup{d} s }

\newcommand{\intt}{\int_0^t}

\newcommand{\nLtwo}[1]{\|#1\|_{L^2}}

\newcommand{\nLfour}[1]{\|#1\|_{L^4}}

\newcommand{\nHtwo}[1]{\|#1\|_{H^2}}
\newcommand{\nLtwoLtwo}[1]{\|#1\|_{L^2 (L^2)}}
\newcommand{\nLtwotLtwo}[1]{\|#1\|_{L_t^2 (L^2)}}
\newcommand{\nLtwoLfour}[1]{\|#1\|_{L^2 (L^4)}}

\newcommand{\nLinfLtwo}[1]{\|#1\|_{L^\infty (L^2)}}

\newcommand{\nLinfLfour}[1]{\|#1\|_{L^\infty (L^4)}}
\newcommand{\nLinfLinf}[1]{\|#1\|_{L^\infty (L^\infty)}}

\newcommand{\nLtwotLinf}[1]{\|#1\|_{L^2_t(L^\infty)}}
\newcommand{\nLtwotLfour}[1]{\|#1\|_{L^2_t(L^4)}}
\newcommand{\nLinftLtwo}[1]{\|#1\|_{L^\infty_t(L^2)}}
\newcommand{\spacelow}{X^{\textup{low}}_{\textup{fMGT III}}}
\newcommand{\spacelowIII}{X^{\textup{low}}_{\textup{fMGT III}}}
\newcommand{\spacehighIII}{X^{\textup{high}}_{\textup{fMGT III}}}
\newcommand{\spaceI}{X_{\textup{fMGT I}}}

\newcommand{\prodLtwo}[2]{(#1, #2)_{L^2}}
\newcommand{\R}{\mathbb{R}} 
\newcommand{\N}{\mathbb{N}} 

\newcommand{\Honetwo}{{H_\diamondsuit^2(\Omega)}}
\newcommand{\Honethree}{{H_\diamondsuit^3(\Omega)}}

\newcommand{\spaceKhigh}{X_{\textup{fMGT--K III}}}

\def\LoneLtwo{L^1(L^2)}

\def\LinfLtwo{L^\infty(L^2)}

\newcommand{\CHone}{C_{H^1, L^4}}

\newcommand{\CHtwo}{C_{H^2, L^\infty}}

\newcommand{\CPF}{C_{\textup{PF}}}

\newtheorem{theorem}{Theorem}
\newtheorem{lemma}{Lemma}
\newtheorem{proposition}{Proposition}
\newtheorem*{assumption*}{Assumptions}

\newtheorem{remark}{Remark}
\numberwithin{lemma}{section}
\numberwithin{proposition}{section}
\numberwithin{theorem}{section}
\numberwithin{equation}{section}
\makeatletter
\newcommand{\leqnomode}{\tagsleft@true}
\newcommand{\reqnomode}{\tagsleft@false}
\makeatother
\newcommand{\TK}{\mathcal{T}}

\newcommand{\fmgt}{\textup{fMGT}}
\newcommand{\fmgti}{\textup{fMGT I}}

\definecolor{grey}{rgb}{0.5,0.5,0.5}

\title[Time-fractional MGT equations]{Time-fractional Moore--Gibson--Thompson equations}
\subjclass[2010]{35L05, 35L72}

\keywords{fractional MGT equations, nonlinear acoustics, well-posedness, limiting behavior}

\author[B. Kaltenbacher and V. Nikoli\'c]{\bfseries Barbara Kaltenbacher$^1$ and Vanja Nikoli\'c$^2$}

\address{  
	Department of Mathematics\\  
	Alpen-Adria-Universit\"at Klagenfurt 
	\\ Universitätsstr. 65--67, A-9020 Klagenfurt, Austria}
\email{barbara.kaltenbacher@aau.at}

\address{ 
	Department of Mathematics \\ 
	Radboud University   \\ 
	Heyendaalseweg 135,
	6525 AJ Nijmegen, The Netherlands}
\email{vanja.nikolic@ru.nl} 

\begin{document}
	\maketitle     
	\vspace*{-4mm}      
	\begin{center}
		{\footnotesize
			$^1$Department of Mathematics, Alpen-Adria-Universit\"at Klagenfurt, Austria \\
			$^2$Department of Mathematics, Radboud University, The Netherlands
		}
\end{center}
\vspace*{4mm}
\begin{abstract}
	In this paper, we consider several time-fractional generalizations of the Jordan--Moore--Gibson--Thompson (JMGT) equations in nonlinear acoustics as well as their linear Moore--Gibson--Thompson (MGT) versions. Following the procedure described in Jordan (2014), these time-fractional acoustic equations are derived from four fractional versions of the Maxwell--Cattaneo law in Compte and Metzler (1997). Additionally to providing well-posedness results for each of them, we also study the respective limits as the fractional order tends to one, leading to the classical third order in time (J)MGT equation.
\end{abstract}
	
\section{Introduction}
It is well-known that using the Fourier temperature flux law, given by
\begin{equation} \label{fourier_law}
	\boldsymbol{q}= -\kappa \nabla \theta,
\end{equation}
in the derivation of second-order models of nonlinear acoustics may lead to the so-called paradox of infinite speed of propagation; see~\cite{kuznetsov1971equations, kaltenbacher2009global, kaltenbacher2007numerical, jordan2008nonlinear, jordan2016survey}. As a remedy, the Maxwell--Cattaneo law may be used instead
\[
\boldsymbol{q}+\tau \boldsymbol{q}_t = - \kappa \nabla \theta,
\]
whereby a time lag $\tau>0$ is introduced between the heat flux and the temperature induced by it. This change within the governing equations leads to the third-order in time sound propagation described by a family of Moore--Gibson--Thompson (MGT) equations in linear acoustics:
\begin{equation}
	\begin{aligned}
		\tau \psi_{ttt}+\psi_{tt}-c^2 \Delta \psi- (\tau c^2+\delta) \Delta \psi_t = 0
	\end{aligned}
\end{equation}
or Jordan--Moore--Gibson--Thompson (JMGT) equations in nonlinear acoustics:
\begin{equation}
	\begin{aligned}
		\tau \psi_{ttt}+\psi_{tt}-c^2 \Delta \psi- (\tau c^2+\delta) \Delta \psi_t = f(\psi_t, \psi_{tt}, \nabla \psi, \nabla \psi_{t});
	\end{aligned}
\end{equation}
see the works of Moore and Gibson~\cite{moore1960propagation}, Thompson~\cite{thompson}, and Jordan~\cite{jordan2014second, jordan2008nonlinear} for a detailed insight into their derivation and physical background and~\cite{kaltenbacher2011wellposedness, KaltenbacherNikolic, bongarti2020vanishing, kaltenbacher2012well, pellicer2020uniqueness} for a selection of results on their mathematical analysis.\\
\indent However, a drawback of using the hyperbolic heat equation is that it may violate the second law
of thermodynamics; see, for example,~\cite{zhang2014time, fabrizio2017modeling, ferrillo2018comparing}. Fractional generalizations of the heat flux law have emerged in the literature as a way of interpolating between the properties of the two flux laws; see, e.g.,~\cite{povstenko2011fractional, compte1997generalized, fabrizio2015some, atanackovic2012cattaneo} and the references contained therein. In~\cite{compte1997generalized}, Compte and Metzler proposed several generalized time-fractional heat-flux laws in the following form:
\begin{equation} \label{fractional_law}
	\begin{aligned}
		(1+\tau^{\alpha_1} \Dt^{\alpha_1})\boldsymbol{q}(t) = -\kappa \Dt^{\alpha_2} \nabla \theta,
	\end{aligned}
\end{equation} 
where the choice of $(\alpha_1, \alpha_2)$ arises from a particular anomalous diffusion process in complex media. In the present work, we derive and analyze the time-fractional (J)MGT equations that arise from the use of fractional temperature laws \eqref{fractional_law} in place of the standard heat-flux law within the governing equations.\\
\indent One such model coming from the choice of fractional orders $(\alpha_1, \alpha_2)=(1, 1-\alpha)$ in the generalized Cattaneo law \eqref{fractional_law} is given by
\begin{equation} \label{general_fJMGT_III}
	\tau \psi_{ttt}+\psi_{tt}-c^2 \Delta \psi -(\tau c^2+\delta \Doal)\Delta \psi_{t}=f(\psi_t, \psi_{tt}, \nabla \psi, \nabla \psi_{t}), \ 0< \alpha \leq 1,
\end{equation}
whereas the choice  $(\alpha_1, \alpha_2)=(\alpha, 1-\alpha)$ leads to 
\begin{equation} \label{general_fJMGT_I}
	\tau^\alpha \Dal \psi_{tt}+\psi_{tt}-c^2 \Delta \psi -\tau^\alpha c^2 \Dal \Delta \psi- \delta \Dtal \D\psi=f(\psi_t, \psi_{tt}, \nabla \psi, \nabla \psi_{t}),
\end{equation}
where we assume that $\alpha \in (1/2, 1)$. We refer to Section~\ref{Sec:Modeling} below for the definition of $\textup{D}_t^\alpha$ and details on the modeling and to Tables~\ref{table:fJMGT} and \ref{table:fMGT} for a complete list of the fractional models that are considered in this work. In particular, we analyze the time-fractional JMGT equations in terms of local-in-time solvability and the limiting behavior of their solutions as $\alpha \rightarrow 1^{-}$.\\
\indent To the best of our knowledge, this is the first work dealing with the mathematical analysis of time-fractional MGT models. We point out that, on the other hand, (J)MGT equations with memories that involve smooth kernel functions represent an active field of research; see, e.g.,~\cite{lasiecka2017global, bucci2019regularity, dell2017moore, alves2018moore, dell2016moore, bounadja2020decay} and the references contained therein. \\
\indent Our exposition is organized as follows. In Section~\ref{Sec:Modeling} we derive four fractional versions of JMGT based on the four instances of \eqref{fractional_law} elaborated on in~\cite{compte1997generalized}. After a short Section~\ref{Sec:Preliminaries} with mathematical notation and tools, we first in Section~\ref{Sec:Analysis_fJMGT_W_III} focus on the version \eqref{general_fJMGT_III} of fixed highest order three. We prove its well-posedness in the linear as well as in the nonlinear case without gradient nonlinearity and justify the limit $\alpha\to 1^{-}$. Next, in Section~\ref{Sec:Analysis_fJMGT_W_others} we provide a similar analysis for the other models, that have in common a $2+\alpha$ leading derivative. This analysis works out with one exception, where the damping term is too weak to allow for varying coefficients or nonlinearities and whose linear version is analyzed separately in Section~\ref{Sec:WaveEq_Memory} based on a reformulation as a second-order wave equation. Before doing so, we return to \eqref{general_fJMGT_III} in Section~\ref{Sec:Analysis_fJMGT_K_III} and provide well-posedness and the limit $\alpha\to1^{-}$ in its full version, including the gradient nonlinearity, which requires higher-order energy estimates.
\section{Modeling with generalized heat-flux equations} \label{Sec:Modeling}

In this section, we consider the four general versions of the constitutive equation \eqref{fourier_law} proposed by Compte and Metzler in~\cite{compte1997generalized} and discuss the resulting acoustic equations. These time-fractional general flux equations (GFE) are as follows:
\begin{alignat}{3}
	\hspace*{-1.5cm}\text{\small (GFE)}\hphantom{III}&&\qquad \qquad  (1+\tau^\alpha \Dal)\boldsymbol{q}(t) =&&\,-\kappa \nabla \theta;\hphantom{\Doal}\\[1mm]
	\hspace*{-1.5cm}\text{\small(GFE I)}\hphantom{II}&& \qquad \qquad(1+\tau^\alpha \Dal)\boldsymbol{q}(t) =&&\, -\kappa \Doal \nabla \theta;\\[1mm]
	\hspace*{-1.5cm}\text{\small(GFE II)}\, \hphantom{I}&&\qquad \qquad (1+\tau^\alpha \Dal)\boldsymbol{q}(t) =&&\, -\kappa \Dt^{\alpha-1} \nabla \theta;\\[1mm]
	\hspace*{-1.5cm}\text{\small(GFE III)}\,\, && \qquad \qquad (1+\tau \partial_t)\boldsymbol{q}(t) =&&\, -\kappa \Doal \nabla \theta,
\end{alignat}
where $\boldsymbol{q}$ denotes the flux vector, $\theta$ the absolute temperature, and $\kappa$ is the thermal conductivity. A numerical study and comparison of the four resulting fractional heat equations has been performed in~\cite{zhang2014time} in a one-dimensional setting. Although they can all predict negative temperatures, the fractional heat equation based on using (GFE I) appears to avoid this nonphysical behavior for $\alpha \in (1/2, 1)$ close enough to $1/2$.    \\
\indent Note that while Compte and Metzler~\cite{compte1997generalized} state the equations using the Riemann--Liouville fractional derivative, in the present work $\Dt^{\gamma}$ always denotes the Caputo--Djrbashian fractional derivative:
	\[
	\Dt^{\gamma}w(t)=\frac{1}{\Gamma(1-\gamma)}\int_0^t (t-s)^{-\gamma}\Dt^{\lceil\gamma\rceil} w(s) \, \textup{d} s, \quad -1<\gamma <1;
	\]
	see, for example,~\cite[\S 1]{kubica2020time} and~\cite[\S 2.4.1]{podlubny1998fractional} for its definition.
	Here $n=\lceil\gamma\rceil$, $n\in\{0,1\}$ is the integer obtained by rounding up $\gamma$ and $\Dt^n$ is the zeroth or first derivative operator.  \\

\noindent {\small \bf (fJMGT)} We begin by discussing the modeling with the first option; that is
\begin{equation} \label{heat_flux_fractional_GFE}
	\begin{aligned}
		(1+\tau^\alpha \Dal)\boldsymbol{q}(t) = -\kappa \nabla \theta;
	\end{aligned}
\end{equation} 
cf.~\cite[Eq. (9)]{zhang2014time}. We note that this modification of the heat-flux law is introduced \emph{ad hoc} in~\cite{compte1997generalized} and then disregarded, however numerical studies of the resulting heat equation in~\cite{zhang2014time} incorporate it as well, and so we include it here. \\
\indent The derivation of the acoustic equation follows the steps taken in~\cite[\S 4]{jordan2014second} with now \eqref{heat_flux_fractional_GFE} in place of the Maxwell--Cattaneo law. This derivation employs a weakly-nonlinear approximation, which for our purposes can be restated as
\begin{equation} \label{weakly_nl_assumptions_alpha}
	\epsilon <<1,\quad \theta=O(\epsilon),\quad \tilde{K}=O(\epsilon), \quad \tau^\alpha=O(\epsilon), \quad |\mathfrak{e}|=O(\epsilon^2).
\end{equation}
Here $\epsilon$ is the Mach number, $\tilde{K}$ is the dimensionless thermal diffusivity, and $\mathfrak{e}$ the dimensionless entropy. Note that, compared to~\cite{jordan2014second}, the condition $\tau^\alpha=O(\epsilon)$ replaces $\tau=O(\epsilon)$ here. \\
\indent It is assumed that the sound wave propagates through a thermally conductive and relaxing liquid or gas with negligible viscosity. Starting from a one-dimensional setting, the governing system is first approximated by
\begin{equation} \label{eq_1_GFE}
	\begin{aligned}
		\psi_{tt}+\tfrac12 \epsilon \partial_t(\psi_x)^2-(1+(\gamma-2)s)[\psi_xs_x+(1+s)\psi_{xx}]=-\epsilon^{-1}\mathfrak{e}_t,
	\end{aligned}
\end{equation}
where $\psi$ is the acoustic velocity potential, $\gamma$ the adiabatic index, and $s$ is known as the condensation; see~\cite[Eq. (44)--(49) and Eq. (53)]{jordan2014second}. Upon employing $s \approx -\epsilon \psi_t$, one arrives at
\begin{equation} \label{eq_1}
	\begin{aligned}
		\psi_{tt}+\tfrac12 \epsilon \partial_t(\psi_x)^2-(1-(\gamma-2)\epsilon\psi_t)[-\epsilon \psi_x \psi_{tx}+(1-\epsilon \psi_t)\psi_{xx}]=-\epsilon^{-1}\mathfrak{e}_t;
	\end{aligned}
\end{equation}
cf. \cite[Eq. (49)]{jordan2014second}. From the entropy production law
$$\tilde{\kappa}{\mathfrak{e}_t}=-\tilde{K} \boldsymbol{q}_x,$$
with $\tilde{\kappa}$ being the dimensionless thermal conductivity,  and the general heat flux law \eqref{heat_flux_fractional_GFE} in a dimensionless version
\[
(1+\lambda_{\alpha} \Dal)\boldsymbol{q}(t) = -\tilde{\kappa} \nabla \theta,
\]
we then have the following entropy equation:
$$   (1+\lambda_{\alpha}  \Dal) \mathfrak{e}_t=\tilde{K}  \theta_{xx}.$$
After utilizing that $\theta \approx - \epsilon (\gamma-1) \psi_t$, we can rewrite it as
\begin{equation} \label{e_t_GFE}
	\begin{aligned}
		(1+\lambda_{\alpha}  \Dal)\mathfrak{e}_t=-\epsilon\tilde{K}(\gamma-1)\psi_{txx};
	\end{aligned}
\end{equation}
cf.~\cite[Eq. (57) and (58)]{jordan2008nonlinear}. Applying the relaxation operator $(1+\lambda_{\alpha} \Dal)$ to \eqref{eq_1} and using \eqref{e_t_GFE} to eliminate $\mathfrak{e}$ then leads to
\begin{equation} \label{eq_2_I}
	\begin{aligned}
		\begin{multlined}[t] (1+\lambda_{\alpha}  \Dal)\left\{\psi_{tt}+\tfrac12 \epsilon \partial_t(\psi_x)^2-(1-(\gamma-2)\epsilon \psi_t)[-\epsilon \psi_x \psi_{tx}+(1-\epsilon \psi_t)\psi_{xx}]\right\}\\
			= \,  \tilde{K}(\gamma-1) \psi_{txx}. \end{multlined}
	\end{aligned}
\end{equation}
With $\lambda_{\alpha} =O(\epsilon)$, by neglecting the $O(\epsilon^2)$ terms in the equation above, we arrive at
\begin{equation} \label{eq_3}
	\begin{aligned}
		(1+\lambda_{\alpha}  \Dal)\psi_{tt}-\lambda_{\alpha} \Dal \psi_{xx}-(1-\epsilon (\gamma-1)\psi_t)\psi_{xx}+\epsilon \partial_t(\psi_x)^2= \tilde{K}(\gamma-1) \psi_{txx}.
	\end{aligned}
\end{equation}
Dividing this equation by $(1-\epsilon (\gamma-1)\psi_t)$, using $(1-\epsilon (\gamma-1)\psi_t)^{-1} \approx 1+\epsilon (\gamma-1)\psi_t$ for $\epsilon<<1$ and neglecting all $O(\epsilon^2)$ terms, yields
\begin{equation} \label{final_eq_2}
	\begin{aligned}
		\begin{multlined}[t]\lambda_{\alpha}  \Dal \psi_{tt}+(1+\epsilon (\gamma-1)\psi_t)\psi_{tt}-\psi_{xx}-\lambda_{\alpha}  \Dal \psi_{xx}\\\hspace*{5cm}-\tilde{K}(\gamma-1) \psi_{txx}+\epsilon \partial_t(\psi_x)^2= 0. \end{multlined}
	\end{aligned}
\end{equation}
Extrapolating to a dimensionalized 3D model in a mathematically general form gives
\begin{equation} \label{fJMGTK}
	\begin{aligned}
		\begin{multlined}[t]\tau^\alpha \Dal \psi_{tt}+(1+2\tilde{k}\psi_t)\psi_{tt}-c^2 \Delta \psi -\tau^\alpha c^2 \Dal \Delta \psi- \delta \D\psi_{t}+ \elltil\partial_t |\nabla \psi|^2=0. \end{multlined}
	\end{aligned}
\end{equation}
Since the quadratic gradient nonlinearity present in this model corresponds to the one in the second-order Kuznetsov equation~\cite{kuznetsov1971equations}, we will henceforth refer to \eqref{fJMGTK} as the fractional JMGT--Kuznetsov equation, or the fJMGT--K equation for short.  \\
\indent Assuming local nonlinear effects can be neglected so that 
\begin{equation}
	|\nabla \psi|^2 \approx c^{-2}\psi_t^2,
\end{equation}
we obtain
\begin{equation} \label{fJMGTW}
	\begin{aligned}
		\begin{multlined}[t]
			\tau^\alpha \Dal \psi_{tt}+(1+2k\psi_t)\psi_{tt}-c^2 \Delta \psi -\tau^\alpha c^2 \Dal \Delta \psi- \delta \D\psi_{t}=0.
		\end{multlined}
	\end{aligned}
\end{equation}
The above approximation corresponds to the one commonly used when reducing the Kuznetsov equation to the Westvervelt second-order model of nonlinear acoustics; cf.~\cite[\S 2.3]{jordan2016survey}. For this reason, we will refer to \eqref{fJMGTW} as the fractional Jordan--Moore--Gibson--Thompson--Westervelt equation, or the fJMGT--W equation for short. This approximation is appropriate when cumulative nonlinear effects dominate the local ones, which is the case, e.g., for sound propagation sufficiently far from the source in terms of wavelengths; see the discussion in \cite[Ch. 3, Section 6]{hamilton1998nonlinear}.\\

\noindent {\small \bf (fJMGT I)} As a second option, we employ the general heat-flux model given by
\begin{equation} \label{heat_flux_fractional}
	\begin{aligned}
		(1+\tau^\alpha \Dal)\boldsymbol{q}(t) = -\kappa \Doal \nabla \theta;
	\end{aligned}
\end{equation} 
see~\cite[Eq. (14)]{compte1997generalized} and~\cite[Eq. (10)]{zhang2014time}. The use of this flux law is motivated in~\cite{compte1997generalized} stochastically by fractal time random walks. Retracing the derivation steps from before leads to the following equation:
\begin{equation} \label{eq_2_II}
	\begin{aligned}
		\begin{multlined}[t] (1+\lambda_{\alpha} \Dal)\left\{\psi_{tt}+\tfrac12 \epsilon \partial_t(\psi_x)^2-(1-(\gamma-2)\epsilon \psi_t)[-\epsilon \psi_x \psi_{tx}+(1-\epsilon \psi_t)\psi_{xx}]\right\}\\
			= \,  \tilde{K}(\gamma-1)  \Doal\psi_{txx} \end{multlined}
	\end{aligned}
\end{equation}
in place of \eqref{eq_2_I}. Neglecting the $O(\epsilon^2)$ terms then yields
\begin{equation} \label{eq_3}
	\begin{aligned}
		(1+\lambda_{\alpha} \Dal)\psi_{tt}-(1-\epsilon (\gamma-1)\psi_t)\psi_{xx}- \lambda_{\alpha} \Dal \psi_{xx}+\epsilon \partial_t(\psi_x)^2= \tilde{K}(\gamma-1)   \Doal\psi_{txx}.
	\end{aligned}
\end{equation}
Analogously to before, dividing by $(1-\epsilon (\gamma-1)\psi_t)$ leads to
\begin{equation} 
	\begin{aligned}
		\begin{multlined}[t]\lambda_{\alpha} \Dal \psi_{tt}+(1+\epsilon (\gamma-1)\psi_t)\psi_{tt}-\psi_{xx}-\lambda_{\alpha} \Dal \psi_{xx}-\tilde{K} (\gamma-1)  \Doal\psi_{txx}\\+\epsilon \partial_t(\psi_x)^2= 0 \end{multlined}
	\end{aligned}
\end{equation}
in place of \eqref{final_eq_2}. Then extrapolating to a general 3D equation gives
\begin{equation} \label{fJMGTK_I}
	\begin{aligned}
		\begin{multlined}[t]\tau^\alpha \Dal \psi_{tt}+(1+2\tilde{k}\psi_t)\psi_{tt}-c^2 \Delta \psi -\tau^\alpha c^2 \Dal \Delta \psi- \delta \Doal \D\psi_{t}+  \elltil\partial_t |\nabla \psi|^2=0, \end{multlined}
	\end{aligned}
\end{equation}
which we will call the fractional Jordan--Moore--Gibson--Thompson--Kuznetsov equation of type I, or the fJMGT--K I equation for short. By assuming local nonlinear effects can be neglected as before, we arrive at
\begin{equation} \label{fJMGTW_I}
	\begin{aligned}
		\begin{multlined}[t]
			\tau^\alpha \Dal \psi_{tt}+(1+2k\psi_t)\psi_{tt}-c^2 \Delta \psi -\tau^\alpha c^2 \Dal \Delta \psi- \delta \Doal \D\psi_{t}=0,
		\end{multlined}
	\end{aligned}
\end{equation}
which we will refer to as the fractional Jordan--Moore--Gibson--Thompson--Westervelt equation of type I, or just the fJMGT--W I equation. \\

\noindent {\small \bf (fJMGT II)}
Thirdly, we employ the heat-flux model given by
\begin{equation} \label{heat_flux_fractional}
	\begin{aligned}
		(1+\tau^\alpha \Dal)\boldsymbol{q}(t) = -\kappa \Dt^{\alpha-1} \nabla \theta;
	\end{aligned}
\end{equation} 
cf.~\cite[Eq. (14)]{compte1997generalized} and \cite[Eq. (11)]{zhang2014time}. This flux law is motivated in~\cite{compte1997generalized} by nonlocal transport theory with memory effects; that is, a nonlocal relation between the flux $\boldsymbol{q}$ and temperature $\theta$:
\[
\boldsymbol{q}(x,t)= \int_0^t \kernel(t-s)\nabla \theta (x,s)\ds,
\]
with a suitable choice of the kernel. Analogously to before, we can derive the following general fractional model:
\begin{equation} \label{fJMGTK_II}
	\begin{aligned}
		\begin{multlined}[t]\tau^\alpha \Dal \psi_{tt}+(1+2\tilde{k}\psi_t)\psi_{tt}-c^2 \Delta \psi -\tau^\alpha c^2 \Dal \Delta \psi- \delta \Dal \D\psi+\elltil\partial_t |\nabla \psi|^2=0, \end{multlined}
	\end{aligned}
\end{equation}
which we will from now on refer to as the fractional Jordan--Moore--Gibson--Thompson--Kuznetsov equation of type II, or the fJMGT--K II equation for short.  If the local nonlinear effects can be neglected, we obtain
\begin{equation} \label{fJMGTW_II}
	\begin{aligned}
		\tau^\alpha \Dal \psi_{tt}+(1+2k\psi_t)\psi_{tt}-c^2 \Delta \psi -(\tau^\alpha c^2 +\delta)\Dal \Delta \psi=0,
	\end{aligned}
\end{equation}
which we will refer to as the fractional Jordan--Moore--Gibson--Thompson--Westervelt equation of type II, or the fJMGT--W II equation for short.\\

\noindent {\small \bf (fJMGT III)} Finally, we consider the wave-like acoustic models resulting from using the following flux law:
\begin{equation} \label{heat_flux_fractional}
	\begin{aligned}
		(1+\tau \partial_t)\boldsymbol{q}(t) = -\kappa \Doal \nabla \theta;
	\end{aligned}
\end{equation} 
see~\cite[Eq. (18)]{compte1997generalized} and~\cite[Eq. (12)]{zhang2014time}. In~\cite{compte1997generalized}, this law is motivated by a delayed equation that may connect the flux to a generalized force
\[
\boldsymbol{q}(t+\tau) = -\kappa \Doal \nabla \theta.
\]
Here, weakly-nonlinear acoustic approximation is based on assuming that 
\begin{equation} \label{weakly_nl_assumptions}
	\epsilon <<1,\quad \theta=O(\epsilon),\quad \tilde{K}=O(\epsilon), \quad \tau=O(\epsilon), \quad |\mathfrak{e}|=O(\epsilon^2).
\end{equation}
Retracing our previous derivation steps then quickly leads to
\begin{equation} 
	\begin{aligned}
		\begin{multlined}[t]\tau \psi_{ttt}+(1+\epsilon (\gamma-1)\psi_t)\psi_{tt}-\psi_{xx}-\tau \psi_{txx}-\tilde{K} (\gamma-1)  \Doal\psi_{txx}\\+\epsilon \partial_t(\psi_x)^2= 0. \end{multlined}
	\end{aligned}
\end{equation}
Extrapolating to a dimensionalized 3D model yields
\begin{equation} \label{fJMGTK_III}
	\begin{aligned}
		\begin{multlined}[t]\tau \psi_{ttt}+(1+2\tilde{k}\psi_t)\psi_{tt}-c^2 \Delta \psi -\tau c^2 \Delta \psi_{t}- \delta \Doal\D\psi_{t}+ \elltil\partial_t |\nabla \psi|^2=0, \end{multlined}
	\end{aligned}
\end{equation}
which we will henceforth refer to it as the fractional Jordan--Moore--Gibson--Thompson--Kuznetsov equation of type III, or the fJMGT--K III equation for short. If the local nonlinear effects can be neglected, we obtain
\begin{equation} \label{fJMGTW_III}
	\begin{aligned}
		\begin{multlined}[t]
			\tau \psi_{ttt}+(1+2k\psi_t)\psi_{tt}-c^2 \Delta \psi -\tau c^2\Delta \psi_{t}- \delta \Doal\D\psi_{t}=0.
		\end{multlined}
	\end{aligned}
\end{equation}
We will refer to this model as the fractional Jordan--Moore--Gibson--Thompson--Westervelt equation of type III, or the fJMGT--W III equation for short. \\

\indent We collect all discussed time-fractional acoustic equations in Table~\ref{table:fJMGT} for convenience and state them with a general source function $f$. Note that the constant $\delta>0$ for models I--III no longer has the dimension of usual sound diffusivity. \\
\indent We assume that $\alpha \in (0,1]$ in the fJMGT II and III equations, whereas we perform the analysis of the fJMGT and fJMGT I equations  under the assumption that $\alpha \in (1/2,1]$.   Formally letting $\alpha \rightarrow 1^{-}$ in these equations leads to the Jordan--Moore--Gibson--Thompson equations, either in the Westervelt or Kuznetsov forms; cf.~\cite{jordan2014second}.

\begin{table}[h]
	\captionsetup{width=.94\linewidth}
	\begin{center} \small
		\begin{tabular}{|m{1.3cm}||m{10.7cm}|}
			\hline
			\vspace*{2mm}
			{fJMGT--}                                     & \vspace*{2mm} \textbf{Nonlinear time-fractional acoustic equations}                                                                                                      \\[6pt]
			\Xhline{2\arrayrulewidth} \hline \vspace*{4mm}
			\centering	K & $\tau^\alpha \Dal \psi_{tt}+(1+2\tilde{k}\psi_t)\psi_{tt}-c^2 \Delta \psi -\tau^\alpha c^2 \Dal \Delta \psi- \delta \D\psi_{t}+ \elltil\partial_t |\nabla \psi|^2=f$   \\[3mm]
			\Xhline{0.02\arrayrulewidth} \vspace*{3mm}
			\centering	W     & $\tau^\alpha \Dal \psi_{tt}+(1+2k\psi_t)\psi_{tt}-c^2 \Delta \psi -\tau^\alpha c^2 \Dal \Delta \psi- \delta \D\psi_{t}=f$                                                \\[3mm] \hline\hline
			\vspace*{4mm}
			\centering	K I                                & $\tau^\alpha \Dal \psi_{tt}+(1+2\tilde{k}\psi_t)\psi_{tt}-c^2 \Delta \psi -\tau^\alpha c^2 \Dal \Delta \psi- \delta \Dtal \D\psi+ \elltil\partial_t |\nabla \psi|^2=f$ \\[3mm]
			\Xhline{0.02\arrayrulewidth} \vspace*{3mm}
			\centering	W I   & $\tau^\alpha \Dal \psi_{tt}+(1+2k\psi_t)\psi_{tt}-c^2 \Delta \psi -\tau^\alpha c^2 \Dal \Delta \psi- \delta \Dtal \D\psi=f$                                              \\[3mm] \hline\hline
			\vspace*{4mm}
			\centering	K II                               & $\tau^\alpha \Dal \psi_{tt}+(1+2\tilde{k}\psi_t)\psi_{tt}-c^2 \Delta \psi -\tau^\alpha c^2 \Dal \Delta \psi- \delta \Dal \D\psi+\elltil\partial_t |\nabla \psi|^2=f$   \\[2mm]
			\Xhline{0.02\arrayrulewidth} \vspace*{3mm}
			\centering	W II  & $\tau^\alpha \Dal \psi_{tt}+(1+2k\psi_t)\psi_{tt}-c^2 \Delta \psi -(\tau^\alpha c^2 +\delta)\Dal \Delta \psi=f$                                                          \\[3mm] \hline\hline
			\vspace*{4mm}
			\centering K III                              & $\tau \psi_{ttt}+(1+2\tilde{k}\psi_t)\psi_{tt}-c^2 \Delta \psi -\tau c^2 \Delta \psi_{t}- \delta \Dtal\D\psi+ \elltil\partial_t |\nabla \psi|^2=f$                     \\[3mm]
			\Xhline{0.02\arrayrulewidth} \vspace*{3mm}
			\centering	W III & $\tau \psi_{ttt}+(1+2k\psi_t)\psi_{tt}-c^2 \Delta \psi -\tau c^2\Delta \psi_{t}- \delta \Dtal\D\psi=f$                                                                   \\[3mm] \hline
		\end{tabular}
	\end{center}
	\caption{Nonlinear fJMGT models in the Kuznetsov and Westervelt forms.} \label{table:fJMGT}
\end{table} 
We will also study the linearizations of these equations (obtained by setting $k=\tilde{k}=\elltil=0$), which we will refer to as fractional Moore--Gibson--Thompson (fMGT) equations; cf. Table~\ref{table:fMGT} below.
\section{Theoretical preliminaries} \label{Sec:Preliminaries}
In this section, we gather several theoretical results from fractional calculus that will be useful later on. To simplify the notation, we often omit the spatial domain and the time interval when writing norms; for example, $\|\cdot\|_{W_t^{p,q} (L^r)}$ denotes the norm on $W^{p,q}(0,t;L^r(\Omega))$ and  $\|\cdot\|_{W^{p,q} (L^r)}$ denotes the norm on $W^{p,q}(0,T;L^r(\Omega))$.\\
\indent Throughout the paper, we assume that $\Om \subset \R^{n}$ is an open, bounded, and sufficiently smooth set, where $n \in \{1, 2, 3\}$. When writing solution spaces for $\psi$, we use the following notational convention:
\begin{equation} \label{sobolev_withtraces}
\begin{aligned}
\Honetwo=&\,H_0^1(\Omega)\cap H^2(\Omega),\\
\Honethree=&\, \left\{\psi \in H^3(\Omega)\,:\, \mbox{tr}_{\partial\Omega} \psi = 0, \  \mbox{tr}_{\partial\Omega} \D \psi = 0\right\}.
\end{aligned}
\end{equation}
In the analysis, we will rely on the continuous embeddings 
$H^1(\Omega)\hookrightarrow L^4(\Omega)$ and $H^2(\Omega) \hookrightarrow L^\infty(\Omega)$:
\begin{equation}\label{embeddigs}
\begin{aligned}
&\|v\|_{L^4(\Omega)}\leq \CHone \|\nabla v\|_{L^2(\Omega)}, \quad && v \in H^1_0(\Omega)\\
&\|v\|_{L^\infty(\Omega)}\leq C_{H^2, L^\infty} \|\D v\|_{L^2(\Omega)}, \quad && v \in \Honetwo.
\end{aligned}
\end{equation}
\indent We often write $x \lesssim y$ instead of $x \leq C y$. In such cases, $C>0$ represents a generic constant that may depend on the medium parameters and the final time $T$, but does not depend on the order of differentiation $\alpha$. \\
\indent Throughout the paper we make the following assumptions on the (constant) medium parameters:
\begin{equation}
\tau >0, \quad c>0, \quad \delta>0, \quad k,\, \tilde{k},\, \elltil\, \in \R. 
\end{equation}
\subsection*{Coercivity estimates}
When performing energy analysis, we will rely on the following two coercivity estimates.
\begin{itemize}
	\item
	\cite[Lemma 1]{Alikhanov:11}: For any absolutely continuous function $w$,
	\begin{equation}\label{eqn:Alikhanov_1}
	{w(t)}\textup{D}_t^{\gamma}w(t)\geq \tfrac12(\textup{D}_t^{\gamma} w^2)(t).
	\end{equation}
\end{itemize}
\begin{itemize}	
	\item
	\cite[Lemma 2.3]{Eggermont1987}; see also \cite[Theorem 1]{VoegeliNedaiaslSauter2016}: For any $w\in H^{-(1-\alpha)/2}(0,t)$,
	\begin{equation}\label{coercivityI}
	\int_0^t \langle \textup{I}^{1-\alpha} w(s),  w(s) \rangle \ds \geq \cos ( \tfrac{\pi(1-\alpha)}{2} ) \| w \|_{H^{-(1-\alpha)/2}(0,t)}^2, 
	\end{equation}	 
	where $\textup{I}^{\gamma}$ denotes the Abel integral operator:
	\[
	\textup{I}^{\gamma}w (t)= \frac{1}{\Gamma(\gamma)}\int_0^t (t-s)^{\gamma-1} w(s)\ds, \quad \gamma \in (0,1)
	\]
and the negative norm is defined by
\[
\| w \|_{H^{-\gamma}(0,t)}^2 = \int_{\mathbb{R}} (1+\omega^2)^{-\gamma}|\hat{w}(\omega)|^2\, \textup{d} \omega, \quad \gamma >0,
\]
with $\hat{w}$ being the Fourier transform of the extension by zero of $w$ to all of $\mathbb{R}$.
\end{itemize}
\subsection*{The Kato--Ponce inequality} The following product rule estimate holds:
\begin{equation}\label{prodruleest}
\|fg\|_{W^{\rho,r}(0,T)}\lesssim \|f \|_{W^{\rho,p_1}(0,T)} \|g\|_{L^{q_1}(0,T)}
+ \| f \|_{L^{p_2}(0,T)} \|g\|_{W^{\rho,q_2}(0,T)}
\end{equation}
for $0\leq\rho\leq\overline{\rho}<1$, 
$1<r<\infty$, $p_1$, $p_2$, $q_1$, $q_2\in(1,\infty]$, with $\frac{1}{r}=\frac{1}{p_i}+\frac{1}{q_i}$, $i=1,2$; see, e.g., \cite{GrafakosOh2014}. 
We note that the fractional derivative $\partial_t^{\rho}=(I-D_{tt})^{\rho/2}$ 
used in \cite{GrafakosOh2014} is an isomorphism between $H^\rho(0,T)$ and $L^2(0,T)$ for any $\rho\in[0,1]$.

\subsection*{Limits with respect to the order of differentiation}
Let $w: [0,T] \mapsto X$, where $X$ is a Banach space equipped with the norm $\|\cdot\|_X$. It is known that the mapping $\alpha\mapsto \Dal w(t)$ is left-sided continuous at any integer $\alpha$, while it is  discontinuous from the right side unless $w(0)=0$. This property is due to the following two identities that follow by integration by parts:
\begin{equation}\label{limitalphaC2}
\begin{aligned}
&\lim_{\alpha \rightarrow 1^{-}}\left\|\Dal w(t) -w_t(t)\right\|_X\\
=& \lim_{\alpha \rightarrow 1^{-}} \left\|\frac{1}{\Gamma(1-\alpha)}\int_0^t  (t-s)^{-\alpha}  w_t(s) \ds -w_t(t) \right\|_X\\
=&\lim_{\alpha \rightarrow 1^{-}} \left\|\frac{t^{1-\alpha}w_t(0)}{\Gamma(2-\alpha)}+\frac{1}{\Gamma(2-\alpha)}\int_0^t (t-s)^{1-\alpha} w_{tt}(s) \ds -w_t(t) \right\|_X \\
=&\, \left\| w_t(0)+\int_0^t w_{tt}(s)\ds -w_t(t)\right\|_X 
=\, 0,
\end{aligned}
\end{equation}
as well as
\begin{equation}
\begin{aligned}
\lim_{\alpha \rightarrow 0^+}\left\|\Dal w(t) -w(t)\right\|_X =&\, \lim_{\alpha \rightarrow 0^+} \left\|\frac{1}{\Gamma(1-\alpha)}\int_0^t  (t-s)^{-\alpha}  w_t(s) \ds -w(t) \right\|_X\\
=&\, \left\| w(0)\right\|_X;
\end{aligned}
\end{equation}
see, for example,~\cite[\S 2.4.1]{podlubny1998fractional}. These limits extend to $L^p(0,T;X)$ under relaxed smoothness assumptions on $w$. Here and below we use the notation 
\begin{align} \label{kersing_notation}
\kersing_\alpha(t)=\frac{t^{-\alpha}}{\Gamma(1-\alpha)},\quad \kersing_0(t)=1.
\end{align}
\begin{lemma}\label{Lemma:Limit}
Let $1 \leq p < \infty$. For any $w\in W^{1,p}(0,T;X){\, \cap \, W^{2,1}(0,T;X)}$, it holds that
\begin{equation}\label{limitalphaL2_1}
\limsup_{\alpha \rightarrow 1^{-}}\left\|\Dal w -w_t\right\|_{L^p(0,T;X)}=0.
\end{equation}
For any $w\in W^{1,1}(0,T;X)$,
\begin{equation}\label{limitalphaL2_2}
\Dal w \rightharpoonup w_t \mbox{ in } L^1(0,T;X)
\mbox{ as }\alpha \rightarrow 1^{-},
\end{equation}
\noindent and 
\begin{equation}\label{limitalphaL2}
\limsup_{\alpha \rightarrow 0^+}\left\|\Dal w -w+w(0)\right\|_{L^p(0,T;X)} =0\,. 
\end{equation}
\end{lemma}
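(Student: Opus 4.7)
The plan is to exploit the convolution representation
\[
\Dal w(t) = (\kersing_\alpha \ast w_t)(t),
\]
valid for any $w \in W^{1,1}(0,T;X)$, and to reduce each claim to Young's convolution inequality combined with the dominated convergence theorem. A crucial uniform bound is $\|\kersing_\alpha\|_{L^1(0,T)} = T^{1-\alpha}/\Gamma(2-\alpha)$, which stays bounded for $\alpha$ in compact subsets of $(0,1)$; heuristically, $\kersing_\alpha$ behaves as an approximation to the identity as $\alpha \to 1^{-}$ and concentrates toward the constant $1$ as $\alpha \to 0^{+}$, which is what drives the three limits.

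For \eqref{limitalphaL2_1} I would start from the pointwise integration-by-parts identity already used in \eqref{limitalphaC2}, which rewrites
\[
\Dal w(t) - w_t(t) = w_t(0)\, h_\alpha(t) + (h_\alpha \ast w_{tt})(t), \qquad h_\alpha(t) := \tfrac{t^{1-\alpha}}{\Gamma(2-\alpha)} - 1,
\]
where $w_t(0)\in X$ is well defined since $w \in W^{2,1}(0,T;X)$ forces $w_t$ to have an absolutely continuous representative on $[0,T]$. The first summand is a fixed element of $X$ times a scalar $h_\alpha$ which vanishes pointwise on $(0,T]$ and is uniformly bounded for $\alpha$ near $1$, so its $L^p(0,T;X)$-norm tends to zero by DCT. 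For the second summand, Young applied to the scalar convolution $|h_\alpha| \ast \|w_{tt}(\cdot)\|_X$ gives the $L^p(0,T)$-bound $\|h_\alpha\|_{L^p(0,T)} \|w_{tt}\|_{L^1(0,T;X)}$, which again tends to zero by the same DCT.

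For \eqref{limitalphaL2_2} I would argue by density: pick $w^{(n)} \in W^{2,1}(0,T;X)$ with $w^{(n)} \to w$ in $W^{1,1}(0,T;X)$. Young gives
\[
\|\Dal w - \Dal w^{(n)}\|_{L^1(0,T;X)} \leq \|\kersing_\alpha\|_{L^1(0,T)} \|w_t - w^{(n)}_t\|_{L^1(0,T;X)},
\]
so the approximation tail is uniformly small in $\alpha$, while \eqref{limitalphaL2_1} with $p=1$ applied to each $w^{(n)}$ gives strong $L^1$ convergence $\Dal w^{(n)} \to w^{(n)}_t$. A routine $\epsilon/3$ splitting delivers strong $L^1$ convergence $\Dal w \to w_t$, and the claimed weak convergence follows a fortiori. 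For \eqref{limitalphaL2}, subtracting and using $w(t)-w(0)=\int_0^t w_t(s)\,\ds$ yields
\[
\Dal w(t) - w(t) + w(0) = \bigl((\kersing_\alpha-1)\ast w_t\bigr)(t),
\]
and Young bounds the left-hand side in $L^p(0,T;X)$ by $\|\kersing_\alpha - 1\|_{L^1(0,T)} \|w_t\|_{L^p(0,T;X)}$; the scalar limit $\|\kersing_\alpha - 1\|_{L^1(0,T)} \to 0$ as $\alpha \to 0^{+}$ follows from DCT, since $t^{-\alpha}/\Gamma(1-\alpha)\to 1$ pointwise on $(0,T]$ and, for $\alpha$ in a small right-neighborhood of $0$, the integrand is dominated by an $\alpha$-independent $L^1$ function of $t$. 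The main obstacle I anticipate is precisely this uniform-in-$\alpha$ DCT domination at both endpoints: one must confirm that the singular factors $t^{1-\alpha}$ and $t^{-\alpha}$ admit an integrable majorant independent of $\alpha$ on one-sided neighborhoods of $\alpha=1^{-}$ and $\alpha=0^{+}$ respectively, after which the Young-plus-DCT skeleton runs mechanically for all three statements.
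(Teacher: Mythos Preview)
Your arguments for \eqref{limitalphaL2_1} and \eqref{limitalphaL2_2} are correct. For \eqref{limitalphaL2_1} you take essentially the same route as the paper (integrate by parts once and apply DCT), packaged slightly more cleanly through Young's inequality. For \eqref{limitalphaL2_2} your density argument is genuinely different from the paper's: the paper tests against an arbitrary $v\in L^\infty(0,T;X^*)$, transfers the convolution onto $v$ via Fubini, and obtains only the weak $L^1$ limit. Your approximation by $w^{(n)}\in W^{2,1}$, combined with the uniform bound $\|\kersing_\alpha\|_{L^1(0,T)}=T^{1-\alpha}/\Gamma(2-\alpha)$, in fact yields \emph{strong} $L^1$ convergence, which is a sharper conclusion obtained by a shorter and more elementary argument.

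There is, however, a real gap in your treatment of \eqref{limitalphaL2}. The hypothesis there is only $w\in W^{1,1}(0,T;X)$, so $\|w_t\|_{L^p(0,T;X)}$ need not be finite for $p>1$, and your Young bound
\[
\|(\kersing_\alpha-1)\ast w_t\|_{L^p(0,T;X)}\leq \|\kersing_\alpha-1\|_{L^1(0,T)}\,\|w_t\|_{L^p(0,T;X)}
\]
is not available. The fix is to apply Young's inequality the other way round, placing the $L^p$ norm on the kernel:
\[
\|(\kersing_\alpha-1)\ast w_t\|_{L^p(0,T;X)}\leq \|\kersing_\alpha-1\|_{L^p(0,T)}\,\|w_t\|_{L^1(0,T;X)}.
\]
This is exactly what the paper does. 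One then needs $\|\kersing_\alpha-1\|_{L^p(0,T)}\to0$ as $\alpha\to0^+$, and here the DCT majorant must be chosen with care: for $\alpha\leq\alpha_0<1/p$ the function $t\mapsto t^{-p\alpha_0}/\Gamma(1-\alpha_0)^p$ dominates $\kersing_\alpha^{\,p}$ and is integrable on $(0,T)$. Your $L^1$ majorant would not suffice for $p>1$.
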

\begin{proof} 
To prove \eqref{limitalphaL2}, we rely on Young's convolution inequality and Lebesgue's Dominated Convergence theorem, which yields 
\[ \|\kersing_\alpha-\kersing_0\|_{L^p(0,T)}\rightarrow 0 \ \text{as} \ \alpha\to0^+,\]  with the $L^1$ dominating function $t\mapsto \dfrac{t^{-p\alpha_0}}{\Gamma(1-\alpha_0)^p}$ for $\alpha\leq\alpha_0<1/p$ (without loss of generality since $\alpha$ tends to zero). Therefore, \eqref{limitalphaL2} holds via 
\[
\begin{aligned}
\limsup_{\alpha \rightarrow 0^+}\left\|\Dal w -w+w(0)\right\|_{L^p(0,T;X)} =&\, 
\limsup_{\alpha \rightarrow 0^+} \left\|(\kersing_\alpha-\kersing_0)*w_t \right\|_{L^p(0,T;X)}\\
\leq&\,\limsup_{\alpha \rightarrow 0^+} \left\|\kersing_\alpha-\kersing_0\right\|_{L^p(0,T)} \left\|w_t \right\|_{L^1(0,T;X)}\,=0.
\end{aligned}
\]
To show \eqref{limitalphaL2_2}, we use the fact that for any $w\in W^{1,1}(0,T;X)$ and $v\in L^\infty(0,T;X^*)$ where $X^*$ is the dual space of $X$, 
\begin{equation}\label{testv}
\begin{aligned}
&\int_0^T \left\langle \int_0^t \kersing_\alpha(t-s)w_t(s)\ds -w_t(t), v(t)\right\rangle_X\dt\\
=&\, \int_0^T \int_0^t \langle \kersing_\alpha(t-s)w_t(s)\ds, v(t)\rangle_X\dt -\int_0^T \langle w_t(t), v(t)\rangle_X\dt\\
=&\, \int_0^T \left\langle w_t(s), \int_s^T   \kersing_\alpha(t-s) v(t) \dt \right\rangle_X  \ds-\int_0^T \langle w_t(r), v(r)\rangle_X\dt\\
=&\, \int_0^T \left\langle w_t(r), \Bigl(\int_r^T   \kersing_\alpha(t-r) v(t) \dt -v(r)\Bigr)\right\rangle_X \textup{d}r
\ =: \int_0^T \psi(r) \textup{d}r,
\end{aligned}
\end{equation}
where $\langle\cdot,\cdot\rangle_X$ denotes the dual pairing. From the identity
\begin{equation}
\begin{aligned}
&\int_0^T \left\langle \phi(r), \Bigl(\int_r^T   \kersing_\alpha(t-r) v(t) \dt -v(r)\Bigr)\right\rangle_X\\
=&\,\int_0^T \left\langle \int_0^t \kersing_\alpha(t-s)\phi(s)\ds -\phi(t), v(t)\right\rangle_X\dt\\
=&\,\int_0^T \frac{1}{\Gamma(2-\alpha)}\left\langle t^{1-\alpha} \phi(0)+\int_0^t (t-s)^{1-\alpha} \phi_t(s) \ds -\phi(t), v(t)\right\rangle_X\dt \ \to0\mbox{ as }\alpha\to1^-
\end{aligned}
\end{equation}
for any $\phi\in C^1(0,T;X)$, we conclude that $\Bigl(\int_r^T \kersing_\alpha(t-r) v(t) \dt -v(r)\Bigr)$ tends to zero pointwise almost everywhere on $(0,T)$ and therefore so does the integrand $\psi$ in \eqref{testv}. \\
\indent On the other hand, $\psi$ can be bounded by the $L^1(0,T)$ function 
\[r\mapsto (C_T+1)\|w_t(r)\|_X \|v\|_{L^\infty(0,T;X^*)},\] where we have estimated 
\begin{equation}
\begin{aligned}
&\left\|\int_r^T \kersing_\alpha(t-r) v(t) \dt \right\|_{X^*} \\
\leq&\, \|v\|_{L^\infty(0,T;X^*)} \int_r^T \kersing_\alpha(t-r)\, \textup{d}r
= \|v\|_{L^\infty(0,T;X^*)} \frac{(T-r)^{1-\alpha}}{\Gamma(2-\alpha)} 
\end{aligned}
\end{equation}
and set \[C_T:=\sup_{\alpha\in(\frac12,1]}\frac{T^{1-\alpha}}{\Gamma(2-\alpha)}<\infty.\]
Thus by Lebesgue's Dominated Convergence theorem, \eqref{testv} tends to zero as $\alpha\to1^-$. Since $v\in L^\infty(0,T;X^*)$ is arbitrary, we obtain weak convergence of $\Dal w -w_t$ to zero in $L^1(0,T;X)$.
If $X$ is a Hilbert space, this computation can be repeated replacing dual pairings with inner products and yields weak convergence in the Hilbert space sense.
\\
\indent Finally, in case $w\in W^{1,p}(0,T;X)\cap W^{2,1}(0,T;X)$, \eqref{limitalphaL2_1} follows from 
\[
\begin{aligned}
&\limsup_{\alpha \rightarrow 1^{-}}\left\|\Dal w -w_t\right\|_{L^p(0,T;X)}\\
=&\, \limsup_{\alpha \rightarrow 1^{-}} \Bigl(\int_0^T{\|(\kersing_{\alpha}*w_t)(t)-w_t(t)\|_X}^p\dt\Bigr)^{1/p}=0\,,
\end{aligned}
\]
where we have again used Lebesgue's Dominated Convergence theorem.
Indeed, $(\kersing_{\alpha}*w_t)-w_t$ (and therefore the $p$th power of its norm) tends to zero almost everywhere on $(0,T)$ as $\alpha\to1^{-}$, on account of the previous discussion.
Furthermore, 
\[\|(\kersing_{\alpha}*w_t)-w_t\|_X^p\leq 2^{p-1}\|\kersing_\alpha*w_t\|_X^p + 2^{p-1}\|w_t\|_X^p\] 
can be further dominated by the $L^1(0,T)$ function 
\[2^{p}\max\{1,T\}^p \Bigl(\|w_t(0)\|_X+ \|w_{tt}\|_{L^1(0,T;X)}\Bigr)^p+ 2^{p-1}\|w_t\|_X^p\]
due to 
\[
\begin{aligned}
&\|(\kersing_{\alpha}*w_t)(t)\|_X=\frac{t^{1-\alpha}}{\Gamma(2-\alpha)}\left\|w_t(0)
+\int_0^t (1-\tfrac{s}{t})^{1-\alpha} w_{tt}(s) \ds\right\|_X\\
&\leq C_T \left(\|w_t(0)\|_X+ \|w_{tt}\|_{L^1(0,T;X)}\right);
\end{aligned}
\]
cf. \eqref{limitalphaC2} with $C_T$ as above.
\end{proof}
\section{Analysis of the equation with the third-order leading term} \label{Sec:Analysis_fJMGT_W_III}
We begin our analytical considerations by looking at the fJMGT--W III equation 
\begin{equation}
\tau \psi_{ttt}+(1+2k\psi_t)\psi_{tt}-c^2 \Delta \psi -\tau c^2\Delta \psi_{t}- \delta \Dtal\D\psi=f,
\end{equation}
\noindent which has an integer-order leading term; cf. Table~\ref{table:fJMGT}. We intend to analyze it by setting up a fixed-point mapping. To this end, we, first of all, study the following linearization:
\begin{equation} \label{fMGT_III_sigma}
\tau \psi_{ttt} + (1+\sigma(x,t)) \psi_{tt} - c^2\Delta\psi -\tau c^2\Delta \psi_t - \delta \Dtal \Delta \psi = f, \quad 0< \alpha \leq 1.
\end{equation}
This is the linear fMGT III equation with a variable coefficient, which we assume is uniformly bounded; cf. Table~\ref{table:fMGT} below. More precisely, we assume that there exist $\underline{\sigma}$, $\overline{\sigma}>0$, such that
\begin{equation}\label{nondegeneracy_sigma}
\underline{\sigma} \leq \sigma(x,t)\leq \overline{\sigma} \ \mbox{ for all } \ x\in\Omega, \,  t\in(0,T).
\end{equation}
Note that since we study the local-in-time behavior in this work, we do not impose a non-degeneracy condition on $1+\sigma$. In the upcoming analysis, the crucial estimate involving fractional derivatives will be the following:
\begin{equation} \label{fractional_est_w}
\begin{aligned}
\int_0^t \prodLtwo{\Dtal w}{w_{tt}} \ds 
\geq&\,     \cos(\pi \alpha/2)    \|w_{tt} \|_{H^{-\alpha/2}(0,t; L^2(\Omega))}^2,
\end{aligned}
\end{equation}
which follows by \eqref{coercivityI}. To formulate the first well-posedness result, we introduce the solution space
\begin{equation} \label{def_X_fMGTIII}
\begin{aligned}
\quad X^{\textup{low}}_{\textup{fMGT III}}=\,\left\{\vphantom{H^{-\alpha/2}(0,T; H_0^1(\Om))}\right.&\psi \in L^{\infty}(0,T; H_0^1(\Omega)): \psi_t \in L^{\infty}(0,T; H_0^1(\Omega)),\\&\, \psi_{tt} \in L^\infty(0,T; L^2(\Omega)) \cap H^{-\alpha/2}(0,T; H_0^1(\Om)),\\&\left.\, \textup{I}^{1-\alpha} \|\nabla \Doal \pt\|^2_{L^2} \in L^\infty(0,T), \ \psi_{ttt} \in L^2(0,T; H^{-1}(\Omega)) \vphantom{H^{-\alpha/2}(0,T; H_0^1(\Om))}\right\}  
\end{aligned}
\end{equation}
for $ \alpha \in (0,1)$, and
\begin{equation} \label{def_X_fMGTIII_alphaone}
\begin{aligned}
X^{\textup{low}}_{\textup{fMGT III}}=  W^{1, \infty}(0,T; H_0^1(\Omega)) \cap W^{2, \infty}(0,T; L^2(\Om))\cap H^3(0,T; H^{-1}(\Om)) 
\end{aligned}
\end{equation}
in case $\alpha=1$. We denote by $\|\cdot\|_{X^{\textup{low}}_{\textup{fMGT III}}}$ the corresponding norm on this space. We claim that the fMGT III equation \eqref{fMGT_III_sigma} has a unique solution in this space under suitable assumptions on the data and the variable coefficient.
\begin{proposition}[Well-posedness of the fMGT III equation] \label{Prop:fMGT_III_lower} Let $\alpha \in (0,1]$ and $\sigma \in L^\infty(0,T; L^\infty(\Omega))$. Given $f \in L^2(0,T; L^2(\Om))$ and \[(\psi_0, \psi_1, \psi_2) \in (H_0^1(\Om), H_0^1(\Om), L^2(\Om)),\] there exists a unique $\psi$ in $X^{\textup{low}}_{\textup{fMGT III}}$, such that
	\begin{equation} \label{fMGT1_sigma}
	\begin{aligned}
	\begin{multlined}[t] \langle \tau \psi_{ttt}, v\rangle_{H^{-1}, H_0^1} + \prodLtwo{(1+\sigma) \psi_{tt}}{v} \\+ \prodLtwo{c^2\nabla\psi +\tau c^2\nabla \psi_t +\delta \Dtal \nabla \psi}{\nabla v} = \prodLtwo{f}{v} \end{multlined}
	\end{aligned}
	\end{equation}
	for all $v \in H_0^1(\Omega)$, a.e. in $(0,T)$, with $(\psi, \psi_t, \psi_{tt})\vert_{t=0}=(\psi_0, \psi_1, \psi_2)$. Furthermore, the solution satisfies the following estimate:
	\begin{equation} \label{energy_est1_fJMGT_W_III}
	\begin{aligned}
	&\,\begin{multlined}[t]
	\|\psi\|^2_{W^{1,\infty}(H^1)}+\nLinfLtwo{\ptt}^2 +\|\pttt\|^2_{L^2(H^{-1})}\\[2mm] \hspace*{1cm}+\cos(\alpha \pi/2)\|\ptt\|^2_{H^{-\alpha/2}(H^1)}+ \sup_{t \in (0,T)} \textup{I}^{1-\alpha} \|\nabla \Doal \pt\|^2_{L^2}
	\end{multlined} \\
	\lesssim&\, \nLtwoLtwo{f}^2+\nLtwo{\nabla \psi_0}^2+\nLtwo{\nabla \psi_1}^2+\nLtwo{\psi_2}^2,
	\end{aligned}
	\end{equation}	
where for $\alpha=1$, the $\cos(\alpha \pi/2)$ term should be omitted.	
\end{proposition}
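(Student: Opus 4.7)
The plan is to use a Galerkin approximation. Let $\{e_k\}_{k\in\N}$ denote the $L^2(\Omega)$-orthonormal eigenfunctions of the Dirichlet Laplacian on $\Omega$ and set $V_n=\textup{span}\{e_1,\dots,e_n\}$. I seek $\psi^n(t)=\sum_{k=1}^n c_k^n(t)e_k$ satisfying \eqref{fMGT1_sigma} tested against each $e_k$, with $(\psi^n,\ptn,\pttn)|_{t=0}$ the $L^2$-projections of $(\psi_0,\psi_1,\psi_2)$ onto $V_n$. The resulting system is a linear third-order integro-differential equation for the coefficients $c_k^n(t)$ whose memory is a Volterra convolution with an $L^1_{\textup{loc}}$ kernel; local-in-time solvability follows from a Banach fixed-point argument on short intervals, with the extension to $[0,T]$ provided by the uniform energy bound derived below.

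The central step is the energy identity obtained by testing with $\pttn$. Using $\Dtal\psi^n=\Doal\ptn$ and spatial integration by parts, the wave and leading-order terms produce (after time integration) an MGT-type energy $\tfrac{\tau}{2}\|\pttn\|_{L^2}^2+\tfrac{\tau c^2}{2}\|\nabla\ptn\|_{L^2}^2+c^2\prodLtwo{\nabla\psi^n}{\nabla\ptn}$ via the identities $\nabla\ptn\cdot\nabla\pttn=\tfrac12\partial_t|\nabla\ptn|^2$ and $\nabla\psi^n\cdot\nabla\pttn=\partial_t(\nabla\psi^n\cdot\nabla\ptn)-|\nabla\ptn|^2$; the remainder $c^2\|\nabla\ptn\|_{L^2}^2$ and the variable-coefficient contribution, bounded by $(1+\|\sigma\|_{L^\infty(L^\infty)})\|\pttn\|_{L^2}^2$, are absorbed by Gronwall. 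The fractional damping term is treated in two complementary ways. Since $\nabla\ptn|_{t=0}$ is finite, one has $\Doal\nabla\ptn=\Ialpha\nabla\pttn$, so Eggermont's inequality \eqref{coercivityI} yields
\begin{equation*}
\delta\int_0^t\prodLtwo{\Doal\nabla\ptn}{\nabla\pttn}\ds\geq\delta\cos(\pi\alpha/2)\,\|\nabla\pttn\|_{H^{-\alpha/2}(0,t;L^2)}^2,
\end{equation*}
while Alikhanov's inequality \eqref{eqn:Alikhanov_1} applied to $w=\Doal\nabla\ptn$ at order $\alpha$, combined with the identities $\Dal\Ialpha=\textup{id}$ on its image and $w|_{t=0}=0$, gives after time integration
\begin{equation*}
\delta\int_0^t\prodLtwo{\Doal\nabla\ptn}{\nabla\pttn}\ds\geq\tfrac{\delta}{2}\,\textup{I}^{1-\alpha}\|\nabla\Doal\ptn\|_{L^2}^2(t).
\end{equation*}
Testing the equation additionally against an arbitrary $v\in H_0^1(\Omega)$ and invoking the above bounds yields the $L^2(0,T;H^{-1})$ estimate on $\ptttn$, and altogether one obtains \eqref{energy_est1_fJMGT_W_III} uniformly in $n$.

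The remaining steps are standard: weak-$*$ compactness provides a subsequence converging in the topology of each space entering $\spacelow$; passage to the limit in the linear equation \eqref{fMGT1_sigma} is then immediate, as $\Dtal$ is a bounded linear convolution operator and the product $\sigma\pttn$ passes to the limit in the weak formulation after pairing against a test function (since $\sigma$ only multiplies fixed test data); the initial data are recovered by weak temporal continuity using $\ptttn\in L^2(H^{-1})$. Uniqueness follows from linearity together with \eqref{energy_est1_fJMGT_W_III}. For $\alpha=1$, the fractional damping collapses to the standard $\delta\D\ptn$ term, the $\cos(\pi\alpha/2)$ factor is absent, and the energy identity reduces to the classical MGT one. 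The main obstacle I anticipate is the coupled extraction of both the $H^{-\alpha/2}$ and pointwise $\textup{I}^{1-\alpha}$ bounds from the single fractional damping contribution; this hinges on the identity $\Doal\nabla\ptn=\Ialpha\nabla\pttn$, which permits either Eggermont or Alikhanov to be invoked as needed and is the key structural feature absent in the more subtle fMGT I/II equations discussed later.
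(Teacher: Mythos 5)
Your Galerkin scheme, the choice of multiplier $\pttn$, and the two coercivity bounds (Eggermont's inequality \eqref{coercivityI} for the $H^{-\alpha/2}$ term and Alikhanov's inequality \eqref{eqn:Alikhanov_1}, via $\pttn=\Dal\Doal\ptn$, for the $\textup{I}^{1-\alpha}$ term) reproduce the paper's existence argument faithfully, and the limit passage and attainment of initial data are handled as in the paper. The genuine gap is uniqueness. You assert that it ``follows from linearity together with \eqref{energy_est1_fJMGT_W_III},'' but that estimate is an a priori bound derived at the semidiscrete level and inherited by the \emph{constructed} solution through weak lower semicontinuity; it is not known to hold for an arbitrary solution of the homogeneous problem in $\spacelowIII$. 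To run the usual linearity argument you would have to test \eqref{fMGT1_sigma} (with $f=0$ and zero data) by $\psi_{tt}$, and this is precisely what the low regularity forbids: for $\psi\in\spacelowIII$ one only has $\ptt\in L^\infty(0,T;L^2(\Omega))\cap H^{-\alpha/2}(0,T;H_0^1(\Omega))$, so $\ptt(t)\notin H_0^1(\Omega)$ and the pairings of $\ptt$ with $\pttt\in L^2(H^{-1})$, with $\Delta\psi$, $\Delta\psi_t$, and with the fractional term $\Doal\Delta\psi_t$ are not defined pointwise in time, nor is the chain-rule identity producing $\frac{\textup{d}}{\textup{d}t}\textup{I}^{1-\alpha}\|\nabla\Doal\psi_t\|_{L^2}^2$ justified for such $\psi$.

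The paper closes this gap with a regularization argument in the spirit of Temam: it multiplies the solution by a smooth time cutoff, mollifies in time, proves the differential inequality \eqref{unique_III_low} for the mollified function (using the Alikhanov-type bound there), and passes to the limit in the mollification parameter before applying the Gronwall argument to conclude $\psi=0$. Note that this is also the only place where the uniform bound $\sup_{t\in(0,T)}\textup{I}^{1-\alpha}\|\nabla\Doal\pt\|_{L^2}^2$ is actually needed; you derive it in your energy step but never use it, which is symptomatic of the missing uniqueness argument. An alternative route (testing with a time-integrated or otherwise smoothed test function, or a duality/transposition argument) would also work, but some such device must be supplied; as written, the uniqueness claim is circular. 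Minor additional remarks: for the $L^2(0,T;H^{-1})$ bound on $\ptttn$ you need the mapping property $\|\Ialpha\nabla\pttn\|_{L^2(L^2)}\lesssim\|\nabla\pttn\|_{H^{-\alpha/2}(L^2)}$ (the paper cites Gorenflo--Yamamoto for this), and for a \emph{linear} Volterra system the continuation to $[0,T]$ does not actually require the energy bound, though your argument is harmless.
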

\begin{proof}
	We focus in the proof on the case $\alpha \in (0,1)$ since the case $\alpha=1$ follows in a more straightforward manner. We perform the analysis by employing the standard Galerkin procedure to discretize the problem in space~\cite[\S 7]{evans2010partial}, with alterations needed to accommodate the third-order derivative and the fractional term. We approximate the solution by
	\begin{equation}
	\begin{aligned}
	\psi^n(x,t) =\sum_{i=1}^n \xi^n_i(t)\phi_i(x),
	\end{aligned}
	\end{equation}
	where $\{\phi_i\}_{i=1}^\infty$ are the eigenfunctions of the Dirichlet-Laplacian operator:
	\[
	-\Delta \phi_i = \lambda_i  \phi_i \ \text{ in } \Omega, \qquad \phi_i=0 \ \text{ on } \partial \Om.
	\]
	Denote $V_n=\textup{span}\{\phi_1, \ldots, \phi_n\}$. The semi-discrete problem is given by
	\begin{equation} \label{semi-discrete_III}
	\begin{aligned}
	\begin{multlined}[t] (\tau \ptttn, \phi_j)_{L^2} + \prodLtwo{(1+\sigma) \pttn}{\phi_j}\\ + \prodLtwo{c^2\nabla\pn +\tau c^2\nabla \ptn +\delta \Dtal \nabla \pn}{\nabla \phi_j} = \prodLtwo{f}{\phi_j}, \end{multlined}
	\end{aligned}
	\end{equation}
	for all $j=1, \ldots, n$, with approximate initial conditions $(\psi^n, \psi_t^n, \psi_{tt}^n)\vert_{t=0}=(\psi_{0}^n, \psi_{1}^n, \psi_{2}^n)$ chosen as $L^2$ projections of $(\psi_0, \psi_1, \psi_2)$ onto $V_n$.  In other words,	
	\begin{equation}
	\begin{aligned}
	\psi_{0}^n= \sum_{i=1}^n \xi^n_{0, i} \phi_i(x), \quad \psi_{1}^n= \sum_{i=1}^n \xi^n_{1, i} \phi_i(x), \quad \psi_{2}^n= \sum_{i=1}^n \xi^n_{2, i} \phi_i(x)
	\end{aligned}
	\end{equation}
	with 
	\begin{equation}
	\xi^n_{0, i} =(\psi_0, \phi_i)_{L^2}, \quad \xi^n_{1, i} =(\psi_1, \phi_i)_{L^2}, \quad \xi^n_{2, i} =(\psi_2, \phi_i)_{L^2}.
	\end{equation}
	Then we know that
	\begin{equation}
	\begin{aligned}
	\|\psi^n_0\|_{H^1} \leq&\, \|\psi_0\|_{H^1} \ \text{ and } \ &&\psi^n_0 \rightarrow \psi_0 \ \text{ in } \ H_0^1(\Omega),\\
	\|\psi^n_1\|_{H^1} \leq&\, \|\psi_1\|_{H^1} \ \text{ and } \ &&\psi^n_1 \rightarrow \psi_1 \ \text{ in } \ H_0^1(\Omega),\\
	\|\psi^n_2\|_{L^2} \leq&\, \|\psi_2\|_{L^2} \ \, \text{ and } \ &&\psi^n_2 \rightarrow \psi_2 \ \text{ in } \ L^2(\Omega);
	\end{aligned}
	\end{equation}
	cf.~\cite[\S 7, Lemma 7.5]{robinson2001infinite}. \\
	
	\noindent (I) \emph{Existence of an approximate solution.} We first show that for a given $n$, a unique approximate solution exists. With $\boldsymbol{\xi}=[\xi^n_1 \ \xi^n_2 \ \ldots \ \xi^n_n]^T$, the approximate problem can be rewritten in matrix form
	\begin{equation} \label{matrix_eq}
	\begin{aligned}
	\tau M\boldsymbol{\xi}_{ttt}+M_{\sigma}(t)\boldsymbol{\xi}_{tt}+c^2 K\boldsymbol{\xi}+\tau c^2 K \boldsymbol{\xi}_t+\delta K \Dtal \boldsymbol{\xi}= \boldsymbol{f}
	\end{aligned}
	\end{equation}
	with the entries of matrices $M=[M_{ij}]$, $M_\sigma(t)=[M_{\sigma, ij}(t)]$, $K=[K_{ij}]$, and the vector $\boldsymbol{f}(t)=[f_i(t)]$ given by
	\begin{equation} \label{matrices}
	\begin{aligned}
	&M_{ij}= (\phi_i, \phi_j)_{L^2}, \quad &&M_{\sigma, ij}(t)= ((1+\sigma(t))\phi_i, \phi_j)_{L^2}, \\
	&K_{ij}= (\nabla \phi_i, \nabla \phi_j)_{L^2}, \quad && f_i(t)=(f(t), \phi_i)_{L^2}.
	\end{aligned}
	\end{equation}
	We also introduce the vectors of coordinates of the approximate initial data in the basis: 
	\[
	\boldsymbol{\xi}_0=[\xi^n_{0,1} \ \xi^n_{0,2} \ \ldots \ \xi^n_{0,n}]^T, \quad \boldsymbol{\xi}_1=[\xi^n_{1,1} \ \xi^n_{1,2} \ \ldots \ \xi^n_{1,n}]^T, \quad \boldsymbol{\xi}_2=[\xi^n_{2,1} \ \xi^n_{2,2} \ \ldots \ \xi^n_{2,n}]^T. 
	\]
	Then by setting $\boldsymbol{\mu}=\boldsymbol{\xi}_{ttt}$, we have
	\begin{equation} \label{eq_xi}
	\boldsymbol{\xi}(t)=\boldsymbol{\xi}_0 +t \boldsymbol{\xi}_1+\frac{1}{2}t^2\boldsymbol{\xi}_2+\int_0^t \frac12(t-s)^2 \boldsymbol{\mu}(s)\ds,
	\end{equation}
	and we can restate the semi-discrete problem as a system of Volterra integral equations given by
	\begin{equation}
	\begin{aligned}
	\begin{multlined}[t]
	\tau M \boldsymbol{\mu}(t)+M_{\sigma}(t)\left(\int_0^t \boldsymbol{\mu}(s)\ds +\boldsymbol{\xi}_{2}\right)+c^2K \left(\boldsymbol{\xi}_0 +t \boldsymbol{\xi}_1+\frac{t^2}{2}\boldsymbol{\xi}_2+\int_0^t \frac12(t-s)^2 \boldsymbol{\mu}(s)\ds\right)\\+\tau c^2 K\left(\boldsymbol{\xi}_1 +t \boldsymbol{\xi}_2+\int_0^t (t-s) \boldsymbol{\mu}(s)\ds\right)+ \frac{\delta}{\Gamma(\alpha)}K\int_0^t(t-s)^{\alpha-1}\left(\int_0^s\boldsymbol{\mu}(r)\, \textup{d}r+\boldsymbol{\xi}_2\right)\textup{d}s\\ =\boldsymbol{f}(t)\hphantom{fill} \end{multlined}
	\end{aligned}
	\end{equation}
	for $t \in (0,T)$. By using Dirichlet's formula 
	\begin{align} \label{exchange_integrals}
	{\frac{\delta}{\Gamma(\alpha)}}\int_0^t(t-s)^{\alpha-1}\left(\int_0^s\boldsymbol{\mu}(r)\,\textup{d}r\right) \ds = {\frac{\delta}{\Gamma(\alpha)}}\int_0^t \left(\int_r^t (t-s)^{\alpha-1}\,\textup{d}s\right) \boldsymbol{\mu}(r) \textup{d}r,
	\end{align}
	we arrive at an equivalent reformulation
	\begin{equation} \label{eq_mu}
	\begin{aligned}
	\boldsymbol{\mu}(t) =\tilde{\boldsymbol{f}}(t)+\int_0^t K_{\alpha}(t,s)\boldsymbol{\mu}(s)\ds,
	\end{aligned}
	\end{equation}
	where the first term on the right is defined as
	\begin{equation}
	\begin{aligned}
	\tilde{\boldsymbol{f}}(t)=&\,\begin{multlined}[t]-\frac{1}{\tau}M^{-1} \left\{M_{\sigma}(t)\boldsymbol{\xi}_2+c^2K (\boldsymbol{\xi}_0+t \boldsymbol{\xi}_1+\frac12 t^2 \boldsymbol{\xi}_2)+\tau c^2 K(\boldsymbol{\xi}_1+t\boldsymbol{\xi}_2)-\boldsymbol{f}(t) \right.\\ \left.
	+\frac{\delta}{\Gamma(\alpha)}K\int_0^t (t-s)^{\alpha-1} \boldsymbol{\xi}_2\ds\right\} \end{multlined}
	\end{aligned}
	\end{equation}
	and the kernel function is given by
	\begin{equation}
	\begin{aligned}  
	K_{\alpha}(t,s)=&\,\begin{multlined}[t] -\frac{1}{\tau}M^{-1}\left(M_{\sigma}(t)+\frac12 c^2K(t-s)^2+\tau c^2K (t-s)\right) \\
	-\frac{1}{\tau} \frac{\delta}{\Gamma(\alpha+1)}M^{-1} K (t-s)^\alpha.
	\end{multlined}
	\end{aligned}
	\end{equation}
{To arrive at the kernel expression, we have employed
	\[
	\frac{\delta}{\Gamma(\alpha)}K\int_s^t (t-r)^{\alpha-1}\, \textup{d}r=\frac{\delta}{\alpha \Gamma(\alpha)}K(t-s)^\alpha.
	\] }
\noindent Due to the $L^\infty(0,T)$ regularity of the kernel $K_\alpha$ on $D=\{(t,s): \ 0 \leq s \leq t \leq T\}$ and the fact that function $\tilde{\boldsymbol{f}}$ belongs to $L^2(0,T)$, vector equation \eqref{eq_mu} has a unique solution $\boldsymbol{\mu} \in L^2(0,T)$. This claim directly follows by considering (systems of) integral equations in $L^2(0, T)$ instead of $C[0,T]$ in~\cite[Theorem 2.1.7]{brunner2004collocation}; see also \cite[Theorem 4.2, p. 24 in \S 9]{GLS90}. From \eqref{eq_xi}, taking into account initial data, a unique $\boldsymbol{\xi} \in H^3(0,T)$ and, in turn, $\pn \in H^3(0,T; V_n)$ exists. \\

	\noindent (II) \emph{A priori energy analysis.} We next focus on the derivation of the energy estimate, which goes through by testing the semi-discrete problem by $\psi^n_{tt}(t) \in V_n$. More precisely, we test \eqref{semi-discrete_III} with $\xi^n_{tt}(t)$ and sum over $j=1, \ldots, n$. After integrating over $(0,t)$, we at first obtain the identity
	\begin{equation} \label{energy_id}
	\begin{aligned}
	&\begin{multlined}[t]\frac12 \tau\nLtwo{\pttn(t)}^2 \Big \vert_0^t
	+ \frac12\tau c^2\nLtwo{\nabla \ptn(s)}^2 \Big \vert_0^t +\delta  \int_0^t \prodLtwo{\Dt^{2-\alpha}\nabla \pn}{\nabla \pttn} \ds
	\end{multlined}\\
	=&\begin{multlined}[t] - \int_0^t(1+\sigma)\| \pttn\|^2_{L^2}\ds+\int_0^t \prodLtwo{f}{\pttn}\ds - c^2 \prodLtwo{\nabla \psi^n}{\nabla \ptn} \Big \vert_0^t 
	+  c^2 \int_0^t \nLtwo{\nabla \ptn}^2 \ds. \end{multlined}
	\end{aligned}
	\end{equation}
	By Young's $\varepsilon$-inequality, we have
	\begin{equation}
	\begin{aligned}
	c^2\int_0^t \prodLtwo{\nabla \pn}{\nabla \ptn} \ds \leq \frac{1}{4\varepsilon}c^2(C(T)\|\nabla \ptn\|_{L^2_t(L^2)}+\nLtwo{\nabla \psi^n_0})^2 +\varepsilon c^2 \nLtwo{\nabla \ptn(t)}^2.
	\end{aligned}
	\end{equation}
	For $0<\varepsilon < \tau/2$, employing estimate \eqref{fractional_est_w} and Gronwall's inequality leads to   
	\begin{equation} \label{energy_est1_fJMGT_W_III_discrete}
	\begin{aligned}
	&\begin{multlined}[t] \nLtwo{\pttn(t)}^2  
	+ \nLtwo{\nabla \ptn(t)}^2+ \cos(\pi \alpha/2)  \| \nabla \pttn \|_{H^{-\alpha/2}(0,t; L^2(\Omega))}^2
	\end{multlined}\\
	\lesssim&\,\begin{multlined}[t] \nLtwoLtwo{f}^2+\nLtwo{\nabla \psi_0}^2+\nLtwo{\nabla \psi_1}^2+\nLtwo{\psi_2}^2. \end{multlined}
	\end{aligned}
	\end{equation}
	Note that we can also use the estimate
	\[
	\nLtwo{\pn(t)} \leq \CPF\nLtwo{\nabla \pn(t)} \lesssim  T \sup_{t \in (0,T)}\nLtwo{\nabla \ptn(t)}+{\nLtwo{\nabla \pn_0}}.
	\]	
	Additionally, standard arguments (cf.~\cite[\S 7]{evans2010partial}) lead to the bound
	\begin{equation}
	\begin{aligned}
	\int_0^t\|\psi_{ttt}^n\|^2_{H^{-1}}\ds \lesssim&\, \begin{multlined}[t] \int_0^t (1+\overline{\sigma})\nLtwo{\pttn}^2\ds+
	\int_0^t \nLtwo{\nabla \pn}^2\ds\\+\int_0^t \nLtwo{\nabla \ptn}^2\ds+\int_0^t \nLtwo{\Dtal\nabla \pn}^2\ds+\int_0^t \nLtwo{f}^2\ds.
	\end{multlined}
	\end{aligned}
	\end{equation}	
	We can further estimate the fractional term on the right as follows:
	\begin{equation} \label{est_Dtal}
	\begin{aligned}
	\|\Dtal\nabla \pn\|^2_{L^2(0,t; L^2)} = \|\textup{I}^{\alpha} \nabla \pttn\|^2_{L^2(0,t; L^2)} \lesssim&\, \|\nabla \pttn\|^2_{X_{-\alpha}(0,t; L^2)} \\
	\lesssim&\,\|\nabla \pttn\|^2_{H^{-\alpha/2}(0,t; L^2)},
	\end{aligned}
	\end{equation}
	where the first inequality follows by~\cite[Theorem 1]{gorenflo1999operator}. {Note that $\{X_{\beta}\}_{\beta \in \R}$ represents a scale of Hilbert spaces of functions $(0,t) \mapsto L^2(\Omega)$; cf.~\cite[\S 5]{baumeister1987stable} and~\cite[Lemma 8]{gorenflo1999operator}.} The second inequality follows by the fact that \[X_{\alpha}(0,t; L^2(\Om))\subseteq X_{\alpha/2}(0,t; L^2(\Om))= H_0^{\alpha/2}(0,t; L^2(\Om))\] for $\alpha<1$ and therefore $\alpha/2<\frac12$, together with duality. Note also that, on account of estimate \eqref{eqn:Alikhanov_1}, we know that
	\begin{equation}\label{Alikhanov_Galerkin}
	\begin{aligned}
	(\nabla \Doal \pn_{t}, \nabla \pttn)_{L^2}=&\,	(\nabla \Doal \pn_{t}, \Dt^\alpha\nabla \Dt^{1-\alpha} \ptn)_{L^2}\\\geq&\,  \frac12 \Dt^\alpha\|\nabla \Doal \ptn\|^2_{L^2}\\
	=&\, \frac12\ddt \, \textup{I}^{1-\alpha} \|\nabla \Doal \ptn\|^2_{L^2}.
	\end{aligned}
	\end{equation}
	Employing this estimate instead of \eqref{fractional_est_w} in the above derivation yields a uniform bound on $\displaystyle \sup_{t \in (0,T)} \textup{I}^{1-\alpha} \|\nabla \Doal \ptn\|^2_{L^2}$, which will be needed in the proof of uniqueness.  \\
	
	\noindent (III) \emph{Passing to the limit.} Thanks to the uniform bounds and \eqref{est_Dtal}, we have weak convergence of a subsequence, which we do not relabel, in the following sense:
	\begin{equation} \label{weak_limits1}
	\begin{alignedat}{4} 
	\ptttn  &\relbar\joinrel\rightharpoonup \pttt &&\ \text{ weakly}  &&\text{ in } &&L^2(0,T; H^{-1}(\Omega)),  \\
	\pttn  &\relbar\joinrel\rightharpoonup \ptt &&\ \text{ weakly-$\star$}  &&\text{ in } &&L^\infty(0,T; L^2(\Omega)),  \\
	\ptn &\relbar\joinrel\rightharpoonup \pt &&\ \text{ weakly-$\star$} &&\text{ in } &&L^\infty(0,T; H_0^1(\Omega)), \\
	\pn &\relbar\joinrel\rightharpoonup \psi &&\ \text{ weakly-$\star$} &&\text{ in } &&L^\infty(0,T; H_0^1(\Omega)).
	\end{alignedat} 
	\end{equation}
	Furthermore, we know that
		\begin{equation} \label{weak_limits2}
	\begin{alignedat}{4} 
	\nabla \pttn &\relbar\joinrel\rightharpoonup \nabla \ptt &&\ \ \text{  weakly} &&\text{ in } &&H^{-\alpha/2}(0,T; L^2(\Omega)), \\
	\Dtal\nabla \pn &\relbar\joinrel\rightharpoonup \Dtal\nabla \psi &&\ \ \text{  weakly} &&\text{ in } &&L^2(0,T; L^2(\Omega)), \\
	\textup{I}^{1-\alpha} \|\nabla \Doal \ptn\|^2_{L^2} &\relbar\joinrel\rightharpoonup \textup{I}^{1-\alpha} \|\nabla \Doal \pt\|^2_{L^2} &&\ \ \text{  weakly-$\star$} &&\text{ in } &&L^\infty(0,T).
	\end{alignedat} 
	\end{equation}
	We can thus pass to the weak limit in the usual way to conclude that $\psi$ solves \eqref{fMGT1_sigma}. Further, weak/weak-$\star$ lower semi-continuity of norms implies
	\[	
	\begin{aligned}
	\| \nabla \ptt \|_{H^{-\alpha/2}(0,t; L^2(\Omega))}^2\leq&\,  \liminf_{n \rightarrow \infty} \| \nabla \pttn \|_{H^{-\alpha/2}(0,t; L^2(\Omega))}^2, \\
	 \sup_{t \in (0,T)}  \textup{I}^{1-\alpha} \|\nabla \Doal \pt\|^2_{L^2} \leq&\, \liminf_{n \rightarrow \infty} \sup_{t \in (0,T)} \textup{I}^{1-\alpha} \|\nabla \Doal \ptn\|^2_{L^2}.
	\end{aligned} 
	\]
	and thus by passing to the limit in the energy estimate for $\pn$, we conclude that $\psi$ satisfies \eqref{energy_est1_fJMGT_W_III}. \\

	\noindent (IV) \emph{Attainment of the initial conditions.} We next show that $\psi$ attains its initial conditions. By \eqref{weak_limits1} and~\cite[Lemma 3.1.7]{zheng2004nonlinear}, we know that
	\[
	\pn (0) \relbar\joinrel\rightharpoonup \psi(0) \text{ weakly}  \text{ in } H_0^1(\Omega),  
	\]
	and since $\psi^n(0) \rightarrow \psi_0$ in $H_0^1(\Omega)$, we have $\psi(0)=\psi_0$. Further,
	\[
	\ptn (0) \relbar\joinrel\rightharpoonup \psi_t(0) \text{ weakly}  \text{ in } L^2(\Omega),  
	\]
	and thus $\psi_t(0)=\psi_1$ as an equality in $L^2(\Omega)$. Similarly, $\psi_{tt}(0)=\psi_2$ as an equality in $H^{-1}(\Omega)$; that is, 
	\[\langle \psi_{tt}(0  ), v \rangle_{H^{-1}, H^1} = \langle \psi_{2}, v \rangle_{H^{-1}, H^1} , \quad \forall v \in H_0^1(\Omega).\]

		\noindent (V) \emph{Uniqueness.} To prove uniqueness, we should show that the only solution of 
	\begin{equation} \label{homogeneous_eq}
	\begin{aligned}
	\tau \psi_{ttt}+(1+\sigma)\ptt-\tau c^2\Delta \psi_t-c^2 \Delta \psi-\delta \Doal \D \psi_t=0
	\end{aligned}
	\end{equation}
	with $\psi_0=\psi_1=\psi_2=0$ in $\spacelowIII$ is $\psi=0$. The issue, however, is that at this point we are not allowed to directly test \eqref{fMGT1_sigma} with $\ptt$ due to its low regularity. Instead, in the spirit of~\cite[\S 2.4]{temam2012infinite}, we will prove that such $\psi$ satisfies
	\begin{equation} \label{unique_III_low}
	\begin{aligned}
	&(\tau \psi_{ttt}-\tau c^2\Delta \psi_t-c^2 \Delta \psi-\delta \Doal \D \psi_t, \psi_{tt})_{L^2} \\
\gtrsim &\,\begin{multlined}[t] \ddt \left\{\frac12\tau \nLtwo{\ptt}^2+\frac12\tau c^2\nLtwo{\nabla \pt}^2  +c^2(\nabla \psi, \nabla \pt)_{L^2}\right. \\ \left.\hspace*{3cm}+\frac{\delta}{2}\textup{I}^{1-\alpha} \|\nabla \Doal \psi_{t}\|^2_{L^2}\right\} - c^2\nLtwo{\nabla \pt}^2.\end{multlined}
	\end{aligned}
	\end{equation}
	This estimate combined with \eqref{homogeneous_eq} implies that
	\begin{equation} \label{unique_III_low_b}
	\begin{aligned}
	\begin{multlined}[t] \ddt \left\{\frac12\tau \nLtwo{\ptt}^2+\frac12\tau c^2\nLtwo{\nabla \pt}^2+c^2(\nabla \psi, \nabla \pt)_{L^2} \right. \\ \left.\hspace*{3cm}+\frac{\delta}{2}\textup{I}^{1-\alpha} \|\nabla \Doal \psi_{t}\|^2_{L^2}\right\} - c^2\nLtwo{\nabla \pt}^2
	\lesssim -((1+\sigma \psi_{tt}, \psi_{tt})_{L^2} , \end{multlined}
	\end{aligned}
	\end{equation}
	after which we can proceed as in the previous energy analysis to arrive at $\psi=0$. We note that if $\psi \in \spacelowIII$ solves \eqref{homogeneous_eq}, then  
	\begin{equation} \label{reg1}
	\begin{aligned}
	\psi_t \in L^2(H_0^1(\Om)), \quad \ptt \in L^2(0,T; L^2(\Om)) \cap H^{-\alpha/2}(0,T; H_0^1(\Omega)).
	\end{aligned}
	\end{equation}
	Furthermore, a bootstrap argument yields
	\begin{equation} \label{reg2}
	\begin{aligned}
	\tau \psi_{ttt}-\tau c^2\Delta \psi_t-c^2 \Delta \psi-\delta \Doal \D \psi_t= - (1+\sigma)\ptt \in L^2(L^2).
	\end{aligned}
	\end{equation}
	We next construct a regularization of $\psi$ which satisfies \eqref{unique_III_low}, following~\cite[\S 2.4, Lemma 4.1]{temam2012infinite}. Let $\tilde{\psi}: \R \mapsto H_0^1(\Omega)$ be defined by
	\begin{equation}
	\begin{aligned}
	\tilde{\psi}= \begin{cases}
	\theta \psi, \ &\text{ on } (0,T), \\
	0, \ &\text{ on } \R \setminus [0,T],
	\end{cases}
	\end{aligned}
	\end{equation}
	where $\theta: \R \mapsto [0,1]$ is a $C^\infty$ truncation function, equal to $0$ on $\R \setminus [0,T]$ and to $1$ on some sub-interval of $(0,T)$. Then
	\begin{equation} \label{reg3}
	\begin{aligned}
	&\tilde{\psi}_t \in L^2(0, \infty; H_0^1(\Om)), \quad \tilde{\psi}_{tt} \in L^2(0, \infty; L^2(\Om)) \cap H^{-\alpha/2}(0,\infty; H_0^1(\Omega)),\\
	&\, \tau \tilde{\psi}_{ttt}-\tau c^2\Delta \tilde{\psi}_t-c^2 \Delta \tilde{\psi}-\delta \Doal \D \tilde{\psi}_t \in L^2(0, \infty; L^2(\Omega)).
	\end{aligned}
	\end{equation}
	We regularize $\tilde{\psi}$ by $\tilde{\psi}_\varepsilon= \varrho_\varepsilon * \tilde{\psi}$, with $\varrho_\varepsilon$ being a $C^\infty$ mollifier. Then $\tilde{\psi}_\varepsilon: \R \mapsto H_0^1(\Omega)$ is a $C^\infty$ function, which satisfies
	\begin{equation} 
	\begin{aligned}
	&(\tau \tilde{\psi}_{\varepsilon, ttt}-\tau c^2 \Delta \tilde{\psi}_{\varepsilon, t}-c^2 \Delta \tilde{\psi}_\varepsilon- \delta \Doal \Delta \tilde{\psi}_{\varepsilon, t}, \tilde{\psi}_{\, \varepsilon, tt})_{L^2} \\
	=&\,  \begin{multlined}[t]\ddt \left\{\frac12\tau \nLtwo{\tilde{\psi}_{\varepsilon, tt}}^2+\frac12\tau c^2\nLtwo{\nabla \tilde{\psi}_{\varepsilon, t}}^2+c^2(\nabla \tilde{\psi}_\varepsilon, \nabla \tilde{\psi}_{\varepsilon, t})_{L^2} \right\} \\- c^2\nLtwo{\nabla \tilde{\psi}_{\varepsilon,t}}^2+\delta (\nabla \Doal \tilde{\psi}_{\varepsilon, t}, \nabla \tilde{\psi}_{\varepsilon,tt})_{L^2}. \end{multlined}
	\end{aligned}
	\end{equation}
	Similarly to \eqref{Alikhanov_Galerkin}, we have
	\[
	\begin{aligned}
 (\nabla \Doal \tilde{\psi}_{\varepsilon, t}, \nabla \tilde{\psi}_{\varepsilon,tt})_{L^2}=\,(\nabla \Doal \tilde{\psi}_{\varepsilon, t}, \Dt^\alpha\nabla \Dt^{1-\alpha} \tilde{\psi}_{\varepsilon, t})_{L^2}\geq \frac12\ddt \textup{I}^{1-\alpha} \|\nabla \Doal \tilde{\psi}_{\varepsilon, t}\|^2_{L^2}.
 \end{aligned}
	\]
	 Therefore,
		\begin{equation} \label{ineq_tilde_eps}
	\begin{aligned}
	&(\tau \tilde{\psi}_{\varepsilon, ttt}-\tau c^2 \Delta \tilde{\psi}_{\varepsilon, t}-c^2 \Delta \tilde{\psi}_\varepsilon- \delta \Doal \Delta \tilde{\psi}_{\varepsilon, t}, \tilde{\psi}_{\, \varepsilon, tt})_{L^2} \\
	\gtrsim&\, \begin{multlined}[t] \ddt \left\{\frac12\tau \nLtwo{\tilde{\psi}_{\varepsilon, tt}}^2+\frac12\tau c^2\nLtwo{\nabla \tilde{\psi}_{\varepsilon, t}}^2 +c^2(\nabla \tilde{\psi}_\varepsilon, \nabla \tilde{\psi}_{\varepsilon, t})_{L^2}\right.\\ \left.+\frac{\delta}{2}\textup{I}^{1-\alpha} \|\nabla \Doal \tilde{\psi}_{\varepsilon, t}\|^2_{L^2}\right\} - c^2\nLtwo{\nabla \tilde{\psi}_{\varepsilon,t}}^2.\end{multlined}
	\end{aligned}
	\end{equation}
	Thanks to \eqref{reg3}, we know that
	\[
	\begin{aligned}
	&\lim_{\varepsilon \rightarrow 0}\ (\tau \tilde{\psi}_{\varepsilon, ttt}-\tau c^2 \Delta \tilde{\psi}_{\varepsilon, t}-c^2 \Delta \tilde{\psi}_\varepsilon- \delta \Doal \Delta \tilde{\psi}_{\varepsilon, t}, \tilde{\psi}_{\varepsilon, tt})_{L^2} \\
	=&\,(\tau \tilde{\psi}_{ttt}-\tau c^2 \Delta \tilde{\psi}_{t}-c^2 \Delta \tilde{\psi}- \delta \Dal \Delta \tilde{\psi}_{t}, \tilde{\psi}_{tt})_{L^2}. 
	\end{aligned}
	\]
	We can thus pass to the limit $\varepsilon \rightarrow 0$ in \eqref{ineq_tilde_eps} to arrive at 
	\begin{equation}
	\begin{aligned}
	&(\tau \tilde{\psi}_{ttt}-\tau c^2 \Delta \tilde{\psi}_t-c^2 \Delta \tilde{\psi}- \delta \Doal \Delta \tilde{\psi}_t, \tilde{\psi}_{tt})_{L^2} \\
\gtrsim&\, \begin{multlined}[t]\ddt \left\{\frac12\tau \nLtwo{\tilde{\psi}_{tt}}^2+\frac12\tau c^2\nLtwo{\nabla \tilde{\psi}_{t}}^2 +c^2(\nabla \tilde{\psi}, \nabla \tilde{\psi}_t)_{L^2}+\frac{\delta}{2}\textup{I}^{1-\alpha} \|\nabla \Doal \tilde{\psi}_{t}\|^2_{L^2}\right\}\\ - c^2\nLtwo{\nabla \tilde{\psi}_{t}}^2.\end{multlined}
	\end{aligned}
	\end{equation}
	By restriction to $(0,T)$, the same holds for $\theta \psi$, from which the claim follows.
\end{proof}

To formulate the second well-posedness result, we introduce a higher-order solution space:
\begin{equation}
\begin{aligned}
\spacehighIII=   W^{1, \infty}(0,T; \Honetwo) \cap W^{2, \infty}(0,T; H_0^1(\Om))\cap H^3(0,T; L^2(\Om)). 
\end{aligned}
\end{equation}
We denote by $\|\cdot\|_{\spacehighIII}$ the corresponding norm on this space. Under stronger regularity assumptions on the data and the coefficient $\sigma$, the fMGT III equation \eqref{fMGT_III_sigma} has a unique solution in this space.
\begin{proposition}[Higher regularity for the fMGT III equation]  \label{Prop:fMGT_III_higher} Let $\alpha \in (0,1]$ and $\sigma \in L^\infty(0,T; L^\infty(\Omega)\cap W^{1, 4}(\Omega))$. Given $f \in L^2(0,T; H_0^1(\Om))$ and \[(\psi_0, \psi_1, \psi_2) \in\Honetwo \times \Honetwo \times H_0^1(\Om),\] there exists a unique solution $\psi \in \spacehighIII$, which solves \eqref{fMGT1_sigma} in the $L^2(0,T; L^2(\Omega))$ sense, and satisfies
	\begin{equation} \label{energy_est2_fMGT_W_III}
	\begin{aligned}
	\|\psi\|^2_{\spacehighIII}
	\lesssim\,\begin{multlined}[t] \nLtwoLtwo{\nabla f}^2+\nLtwo{\D \psi_0}^2+\nLtwo{\D \psi_1}^2+\nLtwo{\nabla \psi_2}^2. \end{multlined}
	\end{aligned}
	\end{equation}
\end{proposition}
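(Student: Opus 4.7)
The plan is to reuse the Galerkin scheme from the proof of Proposition~\ref{Prop:fMGT_III_lower} and to upgrade the a priori estimate by testing the semi-discrete equation with $-\Delta\psi_{tt}^n\in V_n$, which is admissible because the basis $\{\phi_i\}$ consists of Dirichlet--Laplacian eigenfunctions, so $-\Delta\psi_{tt}^n=\sum_i\lambda_i\xi^n_{i,tt}\phi_i\in V_n$. Existence of the Galerkin solution on each finite-dimensional level follows verbatim as before; only the approximation of the initial data must be adjusted, projecting $(\psi_0,\psi_1,\psi_2)$ into $V_n$ in $\Honetwo\times\Honetwo\times H_0^1(\Omega)$ (e.g.\ by using the Riesz projection with respect to $(-\Delta\cdot,\cdot)_{L^2}$), which is compatible with the eigenbasis.

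Testing the semi-discrete equation with $-\Delta\psi_{tt}^n$ and integrating over $(0,t)$ produces the higher-order identity
\begin{equation*}
\begin{aligned}
&\tfrac12\tau\|\nabla\psi_{tt}^n(t)\|_{L^2}^2
+\tfrac12\tau c^2\|\Delta\psi_t^n(t)\|_{L^2}^2
+\delta\int_0^t\bigl(\Dtal\Delta\psi^n,\Delta\psi_{tt}^n\bigr)_{L^2}\ds\\
&\quad=\,\text{initial terms}
-\int_0^t\bigl((1+\sigma)\psi_{tt}^n,-\Delta\psi_{tt}^n\bigr)_{L^2}\ds\\
&\quad\phantom{=}+\int_0^t\bigl(\nabla f,\nabla\psi_{tt}^n\bigr)_{L^2}\ds
-c^2\bigl(\Delta\psi^n,\Delta\psi_t^n\bigr)_{L^2}\bigg|_0^t+c^2\int_0^t\|\Delta\psi_t^n\|_{L^2}^2\ds,
\end{aligned}
\end{equation*}
where $(f,-\Delta\psi_{tt}^n)_{L^2}$ is integrated by parts, using $f\in L^2(0,T;H_0^1(\Omega))$, and $(-c^2\Delta\psi^n,-\Delta\psi_{tt}^n)_{L^2}$ is expanded via the product rule in time. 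For the variable-coefficient term,
\begin{equation*}
\bigl((1+\sigma)\psi_{tt}^n,-\Delta\psi_{tt}^n\bigr)_{L^2}
=\int_\Omega(1+\sigma)|\nabla\psi_{tt}^n|^2\dx
+\int_\Omega(\nabla\sigma\cdot\nabla\psi_{tt}^n)\,\psi_{tt}^n\dx,
\end{equation*}
and the troublesome cross term is bounded by H\"older's inequality and the embedding $H^1_0(\Omega)\hookrightarrow L^4(\Omega)$ as
\begin{equation*}
\Bigl|\int_\Omega(\nabla\sigma\cdot\nabla\psi_{tt}^n)\,\psi_{tt}^n\dx\Bigr|
\lesssim \|\nabla\sigma\|_{L^4}\|\nabla\psi_{tt}^n\|_{L^2}\|\psi_{tt}^n\|_{L^4}
\lesssim \|\nabla\sigma\|_{L^4}\|\nabla\psi_{tt}^n\|_{L^2}^2,
\end{equation*}
which is precisely why the assumption $\sigma\in L^\infty(W^{1,4})$ enters. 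The fractional contribution is controlled by the coercivity estimate \eqref{fractional_est_w} applied to $\Delta\psi_{tt}^n$, yielding $\cos(\pi\alpha/2)\|\Delta\psi_{tt}^n\|_{H^{-\alpha/2}(L^2)}^2$ on the left-hand side.

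After absorbing the $\varepsilon$-Young terms into the left-hand side and applying Gronwall's lemma, we obtain the uniform bounds
\begin{equation*}
\|\nabla\psi_{tt}^n\|_{L^\infty_tL^2}^2+\|\Delta\psi_t^n\|_{L^\infty_tL^2}^2+\|\Delta\psi^n\|_{L^\infty_tL^2}^2
\lesssim\|\nabla f\|_{L^2L^2}^2+\|\Delta\psi_0\|_{L^2}^2+\|\Delta\psi_1\|_{L^2}^2+\|\nabla\psi_2\|_{L^2}^2,
\end{equation*}
and in addition a bound on $\|\Dtal\Delta\psi^n\|_{L^2L^2}$ via \eqref{est_Dtal}. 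The $H^3(L^2)$ bound on $\psi_{ttt}^n$ then follows from the semi-discrete equation itself by isolating $\tau\psi_{ttt}^n$ and bounding each of the remaining terms in $L^2(L^2)$ using the already established estimates; here the control of $(1+\sigma)\psi_{tt}^n$ in $L^2(L^2)$ uses $\sigma\in L^\infty(L^\infty)$. Passing to the weak/weak-$\star$ limit as in step (III) of the previous proof, together with lower semicontinuity, yields a solution $\psi\in\spacehighIII$ satisfying \eqref{energy_est2_fMGT_W_III}. Uniqueness is inherited for free from Proposition~\ref{Prop:fMGT_III_lower}, since $\spacehighIII\subset\spacelowIII$, so the regularized-test-function machinery need not be redone.

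The main obstacle is the cross term $\int\nabla\sigma\cdot\nabla\psi_{tt}^n\,\psi_{tt}^n\dx$: it is this precise term that forces the $W^{1,4}$ assumption on $\sigma$ and dictates why a mere $L^\infty$ bound on $\sigma$ (which sufficed in the lower-order setting) is not enough here. A secondary, but routine, subtlety is to verify that the initial-data projections in the higher norms still satisfy $\|\psi_0^n\|_{H^2}\lesssim\|\psi_0\|_{H^2}$ etc., which holds because $\{\phi_i\}$ is orthogonal in the $(-\Delta\cdot,-\Delta\cdot)_{L^2}$ inner product on $\Honetwo$.
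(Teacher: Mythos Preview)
Your proposal is correct and follows essentially the same approach as the paper: test the Galerkin system with $-\Delta\psi_{tt}^n$, handle the $\sigma$ cross term via H\"older and the embedding $H_0^1\hookrightarrow L^4$ (which is exactly where the $W^{1,4}$ hypothesis on $\sigma$ is used), apply the fractional coercivity \eqref{fractional_est_w} to the $\delta$-term, Gronwall, and recover $\psi_{ttt}\in L^2(L^2)$ from the equation itself. The only difference is in uniqueness: the paper observes that in this higher-regularity setting one may test the homogeneous problem directly with $\psi_{tt}$, whereas you invoke the inclusion $\spacehighIII\subset\spacelowIII$ and inherit uniqueness from Proposition~\ref{Prop:fMGT_III_lower}; both arguments are valid and yours is slightly more economical.
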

\begin{proof}
	The statement when $\alpha=1$ follows analogously to~\cite[Theorem 3.1]{KaltenbacherNikolic}. The proof in the case $\alpha \in (0,1)$ can again be conducted by employing a Galerkin analysis in space. We only outline the derivation of the energy estimate, which follows by testing the semi-discrete problem by $-\D \psi^n_{tt}$. We omit the superscript $n$ in the notation below. After integrating over $(0,t)$, we first obtain the identity
	\begin{equation} \label{energy_id}
	\begin{aligned}
	&\begin{multlined}[t]\frac12 \tau\nLtwo{\nabla \ptt(t)}^2 \Big \vert_0^t
	+ \frac12\tau c^2\nLtwo{\D \psi_{t}(t)}^2 \Big \vert_0^t+\delta  \int_0^t \prodLtwo{\Dt^{2-\alpha}\Delta \psi}{\Delta \psi_{tt}} \ds
	\end{multlined}\\
	=&\,\begin{multlined}[t] \int_0^t \prodLtwo{\nabla f}{\nabla \psi_{tt}}\ds- \int_0^t \prodLtwo{(1+\sigma) \nabla \psi_{tt}}{\nabla \psi_{tt}} \ds-\int_0^t (\psi_{tt}\nabla \sigma, \nabla \psi_{tt})\ds\\ - c^2 \prodLtwo{\D \psi}{\D \psi_t} \Big \vert_0^t 
	+  c^2 \int_0^t \nLtwo{\D \psi_t}^2 \ds. \end{multlined}
	\end{aligned}
	\end{equation}
	We can rely on the following estimate:
	\begin{equation}
	\begin{aligned}
	&\int_0^t \prodLtwo{\nabla f}{\nabla \psi_{tt}}\ds-\int_0^t (\psi_{tt}\nabla \sigma, \nabla \psi_{tt})\ds - c^2 \prodLtwo{\D \psi}{\D \psi_t} \Big \vert_0^t \\
	\leq&\, \begin{multlined}[t]  \nLtwoLtwo{
		\nabla
		f}\|\nabla \psi_{tt}\|_{L^2_t(L^2)}+\|\psi_{tt}\|_{L^2_t(L^4)}\|\nabla \sigma\|_{L^\infty(L^4)}\|\nabla \psi_{tt}\|_{L_t^2(L^2)}\\
	+ c^2 \|\D \psi(t)\|_{L^2}\|\D \psi_t(t)\|_{L^2}+ c^2 \|\D \psi_0\|_{L^2}\|\D \psi_1\|_{L^2}; \end{multlined}
	\end{aligned}
	\end{equation}
	see also~\cite[Theorem 3.1]{kaltenbacher2021inviscid}. We further note that
	\begin{equation}
	\begin{aligned}
	\|\psi_{tt}\|_{L^2_t(L^4)}\|\nabla \sigma\|_{L^\infty(L^4)}\|\nabla \psi_{tt}\|_{L_t^2(L^2)} \leq \CHone \|\nabla \sigma\|_{L^\infty(L^4)}\|\nabla \psi_{tt}\|_{L_t^2(L^2)}^2
	\end{aligned}
	\end{equation}
	and that
	\begin{equation}
	\begin{aligned}
	\|\D \psi(t)\|_{L^2}\|\D \psi_t(t)\|_{L^2} \leq \frac{1}{\epsilon}(\sqrt{T}\|\D \psi_t\|_{L^2_t(L^2)}+\|\D \psi_0\|_{L^2})^2+\epsilon \|\D \psi_t(t)\|_{L^2}^2.
	\end{aligned}
	\end{equation}
	For fixed, small enough $\epsilon>0$, an application of Gronwall's inequality thus yields \eqref{energy_est2_fJMGT_W_III}, at first in a discrete setting. Additionally, we obtain 
	\begin{equation}
	\begin{aligned}
	\nLtwotLtwo{\psi_{ttt}}^2 \lesssim& \, \begin{multlined}[t] \nLtwotLtwo{\psi_{tt}}^2 +\nLtwotLtwo{\Delta\psi}^2+ \nLtwotLtwo{\Delta \psi_t}^2\\+ \nLtwotLtwo{\Dtal \Delta \psi}^2 +\nLtwoLtwo{f}^2,
	\end{multlined}
	\end{aligned}
	\end{equation}
	where, similarly to \eqref{est_Dtal}, we can further estimate the fractional term as follows: 
	\begin{equation}
	\nLtwotLtwo{\Dtal \Delta \psi} \lesssim \|\Delta \psi_{tt}\|_{H_t^{-\alpha/2}(L^2)}.
	\end{equation}
	The rest of the arguments follow analogously to the proof of Proposition~\ref{Prop:fMGT_III_lower}. We point out that in this higher-order setting, we are allowed to test the homogeneous problem ($f=\psi_0=\psi_1=\psi_2=0$) directly with $\ptt$ to prove uniqueness. \\
\indent	Note that for $\psi \in \spacehighIII$, thanks to the embedding $W^{1, \infty}(0,T; \Honetwo) \hookrightarrow C([0,T]; \Honetwo)$, we know that $\psi \in C([0,T]; \Honetwo)$. Likewise, we have \[\psi_t \in L^\infty(0,T; \Honetwo) \cap C([0,T]; H^1_0(\Omega)).\] According to~\cite[\S2, Lemma 3.3]{temam2012infinite}, this implies that $ \psi_t$ is weakly continuous from $[0,T]$ into $\Honetwo$. Similarly, we can prove that $\psi_{tt} \in C_{w}([0,T]; H_0^1(\Om))$. 
\end{proof}
\indent We are now ready to prove a well-posedness result for the nonlinear fJMGT--W III equation.
\begin{theorem}[Local well-posedness of the fJMGT--W III equation] \label{Thm:fJMGT_W_III} Let $\alpha \in (0,1]$, $\tilde{T}>0$, and $\varrho>0$. Further, assume that $f \in L^2(0,\tilde{T}; H_0^1(\Om))$ and that
	\begin{equation}
	\|f\|^2_{L^2(H^1)}+\|\psi_0\|_{H^2}^2+\|\psi_1\|_{H^2}^2+\|\psi_2\|_{H^1}^2 \leq \varrho^2. 
	\end{equation}
	Then there exists $T=T(\varrho) \leq \tilde{T}$, such that the initial boundary-value problem
	\begin{equation}\label{ibvp_fJMGT_W_III}
	\left \{
	\begin{aligned}
	\tau \psi_{ttt} + &(1+2k \psi_t) \psi_{tt} - c^2\Delta\psi -\tau c^2\Delta \psi_t - \delta \Doal \Delta \psi_t =f\hspace*{-2mm}&&\text{in }\Omega\times(0,T), \\[1mm]
	&\psi=\,0&&\text{on }\partial\Omega\times(0,T),\\[1mm]
	&(\psi, \psi_t, \psi_{tt})=\,(\psi_0, \psi_1, \psi_2)&&\mbox{in }\Omega\times \{0\},
	\end{aligned} \right.
	\end{equation}
	has a unique solution $\psi \in X^{\textup{high}}_{\textup{fMGT III}}$, which satisfies	
	\begin{equation} \label{energy_est2_fJMGT_W_III}
	\begin{aligned}
	\begin{multlined}[t] \|\psi\|^2_{\spacehighIII}
	\end{multlined}
	\lesssim\,\begin{multlined}[t] 	\|f\|^2_{L^2(H^1)}+\|\psi_0\|_{H^2}^2+\|\psi_1\|_{H^2}^2+\|\psi_2\|_{H^1}^2. \end{multlined}
	\end{aligned}
	\end{equation}
\end{theorem}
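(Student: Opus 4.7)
The plan is to set up a contraction mapping based on the linearized fMGT III problem of Propositions~\ref{Prop:fMGT_III_lower} and~\ref{Prop:fMGT_III_higher}. Given $\psi^*\in\spacehighIII$ matching the initial data, I would define $\sigma:=2k\psi_t^*$ and let $\mathcal{T}(\psi^*)$ be the unique solution $\psi$ of the linear problem~\eqref{fMGT_III_sigma} with this coefficient and right-hand side $f$. Since $n\le 3$, the embeddings $\Honetwo\hookrightarrow L^\infty(\Omega)\cap W^{1,4}(\Omega)$ combined with $\psi_t^*\in L^\infty(0,T;\Honetwo)$ imply $\sigma\in L^\infty(0,T;L^\infty(\Omega)\cap W^{1,4}(\Omega))$, so Proposition~\ref{Prop:fMGT_III_higher} applies and yields $\mathcal{T}(\psi^*)\in\spacehighIII$.

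For the self-mapping property, a close inspection of the proof of Proposition~\ref{Prop:fMGT_III_higher} shows that the hidden constant in~\eqref{energy_est2_fMGT_W_III} has the form $C(T,\|\nabla\sigma\|_{L^\infty(L^4)})$ with $C(T,M)\to C_0$ as $T\to 0^+$ for every fixed $M$. I would set $R^2:=2C_0\varrho^2$ and work on the ball
\[
B_R=\bigl\{\psi^*\in\spacehighIII:\,\|\psi^*\|_{\spacehighIII}\le R,\ (\psi^*,\psi_t^*,\psi_{tt}^*)|_{t=0}=(\psi_0,\psi_1,\psi_2)\bigr\}.
\]
On $B_R$ we have $\|\sigma\|_{L^\infty(L^\infty\cap W^{1,4})}\lesssim R$, so~\eqref{energy_est2_fMGT_W_III} gives $\|\mathcal{T}(\psi^*)\|^2_{\spacehighIII}\le C(T,R)\varrho^2$; picking $T=T(\varrho)>0$ small enough that $C(T,R)\le 2C_0$ secures $\mathcal{T}(B_R)\subseteq B_R$.

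Contraction will then be established in the weaker metric of $\spacelowIII$. Given $\psi^{*(1)},\psi^{*(2)}\in B_R$, the difference $w=\mathcal{T}(\psi^{*(1)})-\mathcal{T}(\psi^{*(2)})$ solves the linear problem~\eqref{fMGT_III_sigma} with coefficient $\sigma^{(1)}=2k\psi_t^{*(1)}$, zero initial data, and source term $F=-2k(\psi_t^{*(1)}-\psi_t^{*(2)})\psi_{tt}^{(2)}$. Using Hölder's inequality together with the embedding $H^1_0(\Omega)\hookrightarrow L^4(\Omega)$, one gets
\[
\|F\|_{L^2(0,T;L^2(\Omega))}\lesssim T^{1/2}\|\psi_t^{*(1)}-\psi_t^{*(2)}\|_{L^\infty(H^1)}\|\psi_{tt}^{(2)}\|_{L^\infty(H^1)}\lesssim T^{1/2}R\,\|\psi^{*(1)}-\psi^{*(2)}\|_{\spacelowIII}.
\]
Applying Proposition~\ref{Prop:fMGT_III_lower} to $w$ then yields $\|w\|_{\spacelowIII}\lesssim T^{1/2}R\,\|\psi^{*(1)}-\psi^{*(2)}\|_{\spacelowIII}$, which is a strict contraction after (if necessary) shrinking $T$ further. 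Since $B_R$ is closed in the $\spacelowIII$-topology by weak-$\star$ lower semicontinuity of the $\spacehighIII$-norm, Banach's fixed-point theorem produces a unique $\psi\in B_R$ solving~\eqref{ibvp_fJMGT_W_III}, and the estimate~\eqref{energy_est2_fJMGT_W_III} is inherited from~\eqref{energy_est2_fMGT_W_III}.

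The main obstacle will be the simultaneous control of self-mapping and contraction: the Gronwall factor in Proposition~\ref{Prop:fMGT_III_higher} grows with $R$, which in turn is dictated by the very ball on which $\mathcal{T}$ should act invariantly, so the final time must be tuned to $\varrho$ rather than chosen uniformly; one must check that the two smallness conditions on $T$ (one for invariance of $B_R$, one for the contraction factor $T^{1/2}R$ to be less than one) are mutually compatible. Uniqueness of the nonlinear solution outside $B_R$ follows a posteriori by applying the same lower-order estimate of Proposition~\ref{Prop:fMGT_III_lower} to the difference of any two solutions with the same initial data.
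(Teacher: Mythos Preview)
Your proposal is correct and follows essentially the same approach as the paper: a Banach fixed-point argument with the mapping $\mathcal{T}$ defined via the linear problem of Proposition~\ref{Prop:fMGT_III_higher} (with $\sigma=2k w_t$) for the self-mapping property, and contractivity established in the weaker $\spacelowIII$-norm via Proposition~\ref{Prop:fMGT_III_lower} with source $-2k\overline{w}_t\psi_{tt}^{(2)}$. The only cosmetic difference is that the paper phrases the choice of $R$ and $T$ slightly differently (writing the Gronwall constant explicitly as $C_1\exp(C_2(R+1)\tilde{T})$ and then shrinking $\tilde{T}$), while you fix $R^2=2C_0\varrho^2$ first; the substance is identical.
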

\begin{proof}
	The proof follows by applying the Banach Fixed-point theorem to the mapping $\mathcal{T}: w \mapsto \psi$, where $\psi$ solves the linearized equation \eqref{fMGT1_sigma} with $\sigma= 2k w_t$ and
	\begin{equation} \label{defBR}
	w \in B_R:=\{w \in \spacehighIII\, : \|w\|_{ X^{\textup{high}}_{\textup{fMGT III}}}\leq R, \ w(0)=\psi_0, \, w_t(0)=\psi_1, \, w_{tt}(0)=\psi_2  \},
	\end{equation}
	with $R>0$ specified below. Note that
	\[
	\begin{aligned}
	\nLinfLinf{\sigma}+\|\sigma\|_{L^\infty(W^{1,4})} \leq&\, 2\CHtwo |k| \|w_t\|_{L^\infty(H^2)}+2C_{H^1, L^4} |k|\|w_t\|_{L^\infty(H^2)}\\ \lesssim&\, R.
	\end{aligned}
	\]
	Thus by employing estimate \eqref{energy_est2_fMGT_W_III}, where the hidden constant has the form $C_1 \exp(C_2 (R+1)\tilde{T})$, it immediately follows that $\mathcal{T}$ is a well-defined self-mapping on $B_R^{\textup{W}}$, provided $R>0$ is chosen so that 
	\[
	\sqrt{C_1 \exp(C_2 (R+1)\tilde{T})} \, \varrho \leq R. 
	\]
	\indent Next, we prove that $\mathcal{T}$ is strictly contractive. {Note that we will prove contractivity with respect to the weaker norm $\|\cdot\|_{\spacelowIII}$; recall the definition of the space $X^{\textup{low}}_{\textup{fMGT III}}$ in \eqref{def_X_fMGTIII} for $\alpha \in (0,1)$ and \eqref{def_X_fMGTIII_alphaone} for $\alpha=1$}.\\
	\indent We take any $w^{(1)}$ and $w^{(2)}$ in $B_R^W$ and set  $\psi^{(1)}=\mathcal{T} w^{(1)}$ and $\psi^{(2)}=\mathcal{T} w^{(2)} $. We also introduce the short-hand notation for the differences \[\overline{\psi}=\psi^{(1)} -\psi^{(2)}, \qquad \overline{w}= w^{(1)} -w^{(2)}.\] Then we know that $\opsi$ solves the linear equation
	\begin{equation} \label{West_contract_eq}
	\tau\opsi_{ttt}+(1+2k w_t^{(1)})\opsi_{tt}-c^2 \Delta \opsi-\tau c^2 \Delta \opsi_t-\delta \Doal \Delta \opsi_t +2k \overline{w}_t\psi^{(2)} _{tt}=0
	\end{equation}
	and has zero initial conditions. Employing the lower-order estimate \eqref{energy_est1_fJMGT_W_III} with $\sigma=2k w_t^{(1)}$ and $f= -2k \overline{w}_t\psi^{(2)} _{tt}$ yields the bound
	\begin{align}
	\|\opsi\|_{\spacelow}\leq&\, \sqrt{C_1\exp(C_2(R+1)\tilde{T})} \nLtwoLtwo{f}\\
	\leq&\, 
	\begin{multlined}[t]2\sqrt{C_1\exp(C_2(R+1)\tilde{T})}|k| \nLinfLfour{\psi_{tt}^{(2)}}\sqrt{\tilde{T}}\nLinfLfour{\overline{w}_t} \end{multlined}\\
	\leq&\, \theta \|\overline{w}\|_{\spacelow}. 
	\end{align}
	Thus we can guarantee that $\theta \in (0,1)$ and obtain strict contractivity of $\mathcal{T}$ by decreasing $\tilde{T}$. \\
	\indent We note that the space $B_R$ with the metric induced by the norm $\|\cdot\|_{\spacelow}$ is a closed subset of a complete normed space; cf.~\cite[Theorem~4.1]{kaltenbacher2021inviscid}. Existence of a unique solution in $B_R$ then follows by Banach's Fixed-point theorem. 
\end{proof}

\subsection{Limiting behavior of the fJMGT--W III equation} \label{Sec:Limit}
We next discuss the limit with respect to the order of differentiation. Given $\alpha \in (0,1)$, under the assumptions of Theorem~\ref{Thm:fJMGT_W_III}, let $\psi^\alpha$ be the solution of the fJMGT--W III equation:
{\[
	\tau \psi^\alpha_{ttt}+(1+2k\psi^\alpha_t)\psi^\alpha_{tt}-c^2 \Delta \psi^\alpha -\tau c^2\Delta \psi^\alpha_{t}- \delta \Dtal\D\psi^\alpha=f.
	\]}
 Let $\psi$ solve the corresponding JMGT--Westervelt equation obtained by setting $\alpha=1$ above. Then the difference $\overline{\psi}=\psi^{\alpha}-\psi$ solves 
\begin{equation} \label{fJMGT_W_III_diff}
\begin{aligned}
&\tau \opsi_{ttt} + (1+2k\psi_t^{\alpha})\opsi_{tt} - c^2\Delta\opsi -\tau c^2\Delta \opsi_t - \delta \Dt^{1-\alpha} \Delta \opsi_t+2k\opsi_t\psi_{tt} \\
=&\, \delta (\Dt^{1-\alpha}\Delta \psi_t-\Delta \psi_t).
\end{aligned}
\end{equation}
Similarly to the proof of Proposition~\ref{Prop:fMGT_III_lower}, testing with $\overline{\psi}_{tt}$ (which we are allowed to do in this higher-regularity setting) leads to 
\begin{equation} 
\begin{aligned}
\nLtwo{\opsi_{tt}(t)}^2  
+ \nLtwo{\nabla \opsi_{t}(t)}^2 \lesssim\,\nLtwoLtwo{\Dt^{1-\alpha}\nabla \psi_t-\nabla \psi_t}^2.
\end{aligned}
\end{equation}
By recalling Lemma~\ref{Lemma:Limit}, we find that if $\pt(0)=0$, then
\begin{equation}
\lim_{\alpha \rightarrow1^-} \nLtwoLtwo{\Dt^{1-\alpha}\nabla \psi_t-\nabla \psi_t}=0,
\end{equation}
and thus arrive at the following result. 
\begin{proposition} Let the assumptions of Theorem~\ref{Thm:fJMGT_W_III} hold with $\psi_1=0$. Let $\{\psi^\alpha\}_{\alpha \in (0,1)}$ be the family of solutions to the \textup{fJMGT--W III} equation and let $\psi$ solve the corresponding \textup{JMGT} equation with $\alpha=1$. Then $\psi^{\alpha}$ converges to $\psi$ in $W^{1, \infty}(0,T; H_0^1(\Omega)) \cap W^{2, \infty}(0,T; L^2(\Om))$ as $\alpha \rightarrow 1^-$.
\end{proposition}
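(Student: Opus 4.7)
The plan is to treat the difference equation \eqref{fJMGT_W_III_diff} as a linear fMGT III equation of the form \eqref{fMGT_III_sigma} for $\opsi$ with variable coefficient $\sigma=2k\psi_t^\alpha$, vanishing initial data, and inhomogeneity $F^\alpha:=\delta(\Dt^{1-\alpha}\Delta\psi_t-\Delta\psi_t)-2k\,\opsi_t\,\psi_{tt}$, derive an $\alpha$-uniform energy bound on $\opsi$, and then invoke Lemma~\ref{Lemma:Limit} to show that the forcing tends to zero in $L^2(0,T;L^2(\Omega))$ as $\alpha\to1^-$. As a preliminary, I would apply Theorem~\ref{Thm:fJMGT_W_III} with the common data $(\psi_0,\psi_1,\psi_2)$, source $f$, and radius $\varrho$ to obtain a joint existence time $T\leq\tilde T$ on which both $\{\psi^\alpha\}_{\alpha\in(0,1)}$ and $\psi$ are controlled in $\spacehighIII$ \emph{uniformly in $\alpha$}: the fixed-point constants stemming from \eqref{energy_est2_fMGT_W_III} are of the form $C_1\exp(C_2(R+1)\tilde T)$ with no $\cos(\alpha\pi/2)$ degeneracy, since the higher-order estimate rests on the $\alpha$-uniform Alikhanov identity \eqref{Alikhanov_Galerkin} instead of the Eggermont coercivity \eqref{fractional_est_w}. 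In particular, $\sigma$ is bounded in $L^\infty(0,T;L^\infty(\Omega)\cap W^{1,4}(\Omega))$ and $\psi_{tt}$ in $L^\infty(0,T;L^4(\Omega))$ independently of $\alpha$.

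Next I would test the difference equation against $\opsi_{tt}$, which is permissible because $\opsi\in\spacehighIII$, and mimic the energy identity \eqref{energy_id} from the proof of Proposition~\ref{Prop:fMGT_III_lower}. Integration by parts in space on the $\Delta\opsi$ and $\Delta\opsi_t$ terms produces $\tfrac{\tau c^2}{2}\ddt\nLtwo{\nabla\opsi_t}^2$ and a cross term $c^2\prodLtwo{\nabla\opsi}{\nabla\opsi_t}$ absorbed by an $\varepsilon$-Young inequality against $\nLtwo{\nabla\opsi_t}^2$; the fractional damping term $-\delta\Dt^{1-\alpha}\Delta\opsi_t$ contributes a nonnegative boundary term through \eqref{Alikhanov_Galerkin}; the coefficient term $(1+2k\psi_t^\alpha)\opsi_{tt}^2$ is handled by Gronwall using the uniform bound on $\sigma$; and the perturbation $2k\opsi_t\psi_{tt}$ is bounded by $2|k|\nLinfLfour{\psi_{tt}}\nLfour{\opsi_t}\nLtwo{\opsi_{tt}}$ and absorbed by Gronwall. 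On the right, I would integrate by parts in space to rewrite
\[
\delta\prodLtwo{\Dt^{1-\alpha}\Delta\psi_t-\Delta\psi_t}{\opsi_{tt}}=-\delta\prodLtwo{\Dt^{1-\alpha}\nabla\psi_t-\nabla\psi_t}{\nabla\opsi_{tt}},
\]
and then by parts in time to shift the time derivative off $\nabla\opsi_t$, the boundary contributions at $t=0$ vanishing owing to the zero initial data for $\opsi$. Collecting everything and applying Gronwall yields, with a constant independent of $\alpha$,
\[
\nLtwo{\opsi_{tt}(t)}^2+\nLtwo{\nabla\opsi_t(t)}^2\lesssim\nLtwoLtwo{\Dt^{1-\alpha}\nabla\psi_t-\nabla\psi_t}^2.
\]

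Finally, the assumption $\psi_1=0$ ensures $\nabla\psi_t(0)=0$; since $\psi\in\spacehighIII$ implies $\nabla\psi_t\in W^{1,\infty}(0,T;L^2(\Omega))$, Lemma~\ref{Lemma:Limit}, statement \eqref{limitalphaL2}, applied with $X=L^2(\Omega)$, $p=2$, and the substitution $\beta=1-\alpha\to0^+$, gives
\[
\lim_{\alpha\to1^-}\nLtwoLtwo{\Dt^{1-\alpha}\nabla\psi_t-\nabla\psi_t}=0,
\]
from which the asserted convergence of $\psi^\alpha$ to $\psi$ in $W^{1,\infty}(0,T;H_0^1(\Omega))\cap W^{2,\infty}(0,T;L^2(\Omega))$ follows. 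I expect the principal delicacy to lie in the $\alpha$-uniform control of the constants at every stage — both within the fixed-point argument underlying Theorem~\ref{Thm:fJMGT_W_III} and in the energy estimate for $\opsi$ — and the key structural point making this work is precisely the substitution of \eqref{fractional_est_w} by \eqref{Alikhanov_Galerkin} at the higher-regularity level, which quarantines the quantitative $\alpha$-dependence inside the forcing term handled by Lemma~\ref{Lemma:Limit}.
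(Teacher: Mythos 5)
Your overall route coincides with the paper's: form the difference equation \eqref{fJMGT_W_III_diff}, test with $\opsi_{tt}$ (legitimate in the $\spacehighIII$ setting), derive an energy bound whose constants are uniform in $\alpha$, and conclude with Lemma~\ref{Lemma:Limit} using $\psi_1=0$; the preliminary observation about $\alpha$-independent fixed-point constants is also consistent with the paper. The one step that does not go through as written is your treatment of the inhomogeneity by integration by parts in time. After the spatial integration by parts you face $-\delta\int_0^t\prodLtwo{\Dt^{1-\alpha}\nabla\psi_t-\nabla\psi_t}{\nabla\opsi_{tt}}\ds$; shifting the time derivative onto the first factor produces $\partial_t\bigl(\Dt^{1-\alpha}\nabla\psi_t-\nabla\psi_t\bigr)$, which involves $\nabla\psi_{ttt}$ (plus a singular contribution proportional to $t^{\alpha-1}\nabla\psi_2$), together with a boundary term $\delta\prodLtwo{(\Dt^{1-\alpha}\nabla\psi_t-\nabla\psi_t)(t)}{\nabla\opsi_t(t)}$ at the upper limit — only the $t=0$ contribution vanishes. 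Theorem~\ref{Thm:fJMGT_W_III} provides $\psi_{ttt}\in L^2(0,T;L^2(\Om))$ with no spatial gradient, so $\nabla\psi_{ttt}\in L^1(0,T;L^2(\Om))$ is not available; and the boundary term forces you to control $\sup_{t}\nLtwo{(\Dt^{1-\alpha}\nabla\psi_t-\nabla\psi_t)(t)}$, whose convergence to zero follows neither from Lemma~\ref{Lemma:Limit} (stated for $p<\infty$) nor from the available regularity. Consequently the announced bound $\nLtwo{\opsi_{tt}(t)}^2+\nLtwo{\nabla\opsi_t(t)}^2\lesssim\nLtwoLtwo{\Dt^{1-\alpha}\nabla\psi_t-\nabla\psi_t}^2$ does not come out of the computation you describe.

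The repair is simple and keeps you on the paper's path: do not integrate by parts in time. Since $\psi^\alpha$ and $\psi$ are bounded in $\spacehighIII$ uniformly in $\alpha$, you have $\nabla\opsi_{tt}\in L^\infty(0,T;L^2(\Om))$ with an $\alpha$-independent bound, hence
\[
\delta\left|\int_0^t\prodLtwo{\Dt^{1-\alpha}\nabla\psi_t-\nabla\psi_t}{\nabla\opsi_{tt}}\ds\right|
\lesssim \nLtwoLtwo{\Dt^{1-\alpha}\nabla\psi_t-\nabla\psi_t},
\]
which, combined with your treatment of the remaining terms (Alikhanov-type sign of the fractional damping, Gronwall for the variable coefficient and for $2k\opsi_t\psi_{tt}$), yields an energy bound by the $L^2(0,T;L^2(\Om))$ norm of the difference (up to a harmless change of power). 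This tends to zero by \eqref{limitalphaL2} applied to $w=\nabla\psi_t$ with $w(0)=\nabla\psi_1=0$, for which only $\nabla\psi_{tt}\in L^1(0,T;L^2(\Om))$ is needed; note that the spatial integration by parts you perform is indeed essential, since Lemma~\ref{Lemma:Limit} applied to $\Delta\psi_t$ would require $\Delta\psi_{tt}\in L^1(0,T;L^2(\Om))$, which $\spacehighIII$ does not provide.
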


\section{Analysis of the equations with the fractional leading term}\label{Sec:Analysis_fJMGT_W_others}
We next discuss to what extent the analysis we have performed for the fJMGT--W III equation carries over to the other versions. To this end, the crucial question is whether the Galerkin approximation procedure based on energy estimates is feasible for a linearized equation:
\begin{equation} \label{linearized_fMGTI}
\tau^\alpha \Dt^{2+\alpha}\psi + (1+\sigma) \psi_{tt} - c^2\Delta\psi -\tau^\alpha c^2\Dt^\alpha\Delta \psi - \delta \Dt^\beta \Delta \psi = f\,, 
\end{equation}
with 
\begin{equation} \label{def_beta}
\beta = \begin{cases} 1 &\mbox{ for fMGT},\\2-\alpha &\mbox{ for fMGT I},\\\alpha &\mbox{ for fMGT II},
\end{cases}
\end{equation}
where for the respective nonlinear versions of Westervelt type we have $\sigma=2k\psi_t$ in mind. These three cases have in common the fact that, unlike with the fMGT III equation, the leading-order time derivative is fractional and varies with $\alpha$. \\
\indent The analysis of the fMGT and fMGT I equations follows similar lines of reasoning. We thus present the proof for the fMGT I equation with details and only outline the main arguments in the analysis of the fMGT model. To facilitate the analysis, we assume that $\alpha \in (1/2, 1)$, which, at least for the equation based on employing (GFE I),  appears to be the physically justified range according to the numerical experiments performed in~\cite{zhang2014time}. \\
\indent The analysis of the fMGT II equation cannot be carried out in the same manner; we explain why in Remark~\ref{Remark:fMGT_II} below and a offer different way of analyzing it when $\sigma=0$ in Section~\ref{Sec:WaveEq_Memory}.
\subsection{Analysis of the fJMGT--W I equation} 
In this section we consider the equation 
\begin{equation}
\tau^\alpha \Dal \psi_{tt}+(1+2k\psi_t)\psi_{tt}-c^2 \Delta \psi -\tau^\alpha c^2 \Dal \Delta \psi- \delta \Dtal \D\psi=f.
\end{equation}
To carry out the analysis starting from a linearization, we need to assume that the coefficient $\sigma$ is small enough in a suitable norm. To this end, for $\alpha \in (1/2, 1)$, we introduce the space
\[
X^\sigma_\fmgti = L^2(0,T;(W^{1,3}\cap L^\infty)(\Om))
\]
{equipped with the norm $\|\cdot\|_{X^\sigma_\fmgti}$.} Further, the solution space for $\psi$ will be
\begin{equation} \label{Xfmgti}
\begin{aligned}
\quad \spaceI=\,  \left\{ \psi\in H^{2+\alpha}(0,T; L^2(\Om)): \right. & \left. \Dal \psi \in L^\infty(0,T; \Honetwo),\right. \\ & \left. \Dt^{1+\alpha} \psi \in L^\infty(0,T; H_0^1(\Om)) \right\}, 
\end{aligned}
\end{equation}
{equipped with the norm $\|\cdot\|_{\spaceI}$}, which is the space induced by the part of the energy that can be bounded by a uniform constant independent of $\alpha$; see estimate \eqref{energy_est_I} in Proposition~\ref{Prop:fMGT_I} below.

Since the coefficient $\sigma$ acts as a placeholder for $2k \psi_t$, we note that
\[ \|2k \psi_t\|_{X^\sigma_\fmgti} \lesssim \|\psi\|_{\spaceI},\]
due to interpolation 
\[ 
\begin{aligned}
H^\alpha(0,T;\Honetwo)\cap H^{1+\alpha}(0,T;H_0^1(\Om))\subseteq&\, H^{\alpha+\theta}(0,T;H^{1+(1-\theta)}(\Om))\\
=&\, H^1(0,T;H^{1+\alpha}(\Om))
\end{aligned}
\]
with $\theta=1-\alpha$ and continuity of the embedding $H^{1+\alpha}(\Om)\hookrightarrow (W^{1,3}\cap L^\infty)(\Om)$.

We are now ready to analyze equation \eqref{linearized_fMGTI} for $\beta=2-\alpha$.
\begin{proposition}[Well-posedness of the \textup{fMGT I} equation]\label{Prop:fMGT_I}
	Let  $\alpha\in [\alpha_0,1)$ for some $\alpha_0 > 1/2$. Assume that $f \in H^{\alpha-1/2}(0,T;L^2(\Om))$, $\sigma \in X^\sigma_\fmgti$, and \[(\psi_0, \psi_1, \psi_2) \in (\Honetwo, \Honetwo, H_0^1(\Om)).\] There exists $\varrho>0$, independent of $\alpha$, 
	such that if \[\|\sigma\|_{X^\sigma_\fmgti} \leq \varrho,\] then there is a unique $\psi \in X_\fmgti$, which satisfies the \textup{fMGT I} equation in the $L^2(0,T; L^2(\Om))$ sense with $(\psi, \psi_t, \psi_{tt})\vert_{t=0}=(\psi_0, \psi_1, \psi_2)$.  Furthermore, this solution fulfills the following estimate:
	\begin{equation} \label{energy_est_I}
	\begin{aligned}
	& \begin{multlined}[t]\|\Dt^{2+\alpha}\psi\|_{L^2_t(L^2)}^2
	+ \nLtwo{\nabla \Dt^{1+\alpha}\psi(t)}^2
	+\nLtwo{\D \Dt^{\alpha}\psi(t)}^2\\
	+ C(\alpha)\Bigl(\|\nabla\ptt\|_{{H_t^{-(1-\alpha)/2}}(L^2)}^2
	+ \|\Dt^{3/2}\D \psi\|_{L^2_t(L^2)}^2\Bigr) \end{multlined}
	\\
	\lesssim&\, \|f\|_{H^{\alpha-1/2}(L^2)}^2+\nLtwo{\D \psi_0}^2+\nLtwo{\D \psi_1}^2+\nLtwo{\nabla \psi_2}^2,
	\end{aligned}
	\end{equation}
	where $C(\alpha)\to0$ as $\alpha\to1^-$. 
\end{proposition}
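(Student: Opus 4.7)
The approach I would take mirrors the Galerkin-in-space scheme of Proposition~\ref{Prop:fMGT_III_lower}, with modifications to accommodate the fractional leading derivative $\Dt^{2+\alpha}\psi$ and the weaker damping $\Dtal \D \psi$. First, I discretize using the Dirichlet--Laplacian eigenbasis $\{\phi_i\}$, projecting the data onto $V_n = \textup{span}\{\phi_1, \ldots, \phi_n\}$. To show existence of the approximate solution $\pn$, I set $\bmu = \Dt^{2+\alpha} \bxi$ in place of $\bxi_{ttt}$ and integrate $(2+\alpha)$-times in time in the spirit of \eqref{eq_xi}; this converts the semi-discrete matrix system into a Volterra integral equation of the second kind whose kernel combines polynomials with weakly singular powers $(t-s)^{\alpha-1}$, $(t-s)^{\alpha}$, $(t-s)^{1-\alpha}$. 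Since all of these are locally integrable on $[0,T]$ and the forcing lies in $L^2(0,T)$, standard Volterra theory (cf.~\cite[Theorem~2.1.7]{brunner2004collocation}) yields a unique $\bmu \in L^2(0,T)$ and hence $\pn \in H^{2+\alpha}(0,T; V_n)$.

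The heart of the proof is the a~priori energy estimate, which I derive by testing the semi-discrete equation with $-\D \Dt^{1+\alpha} \pn$. The leading term becomes
\[
\tau^\alpha (\Dal \pttn, -\D \Dt^{1+\alpha} \pn)_{L^2} = \tau^\alpha (\nabla \Dt^{2+\alpha} \pn, \nabla \Dt^{1+\alpha} \pn)_{L^2} = \tfrac{\tau^\alpha}{2} \ddt \|\nabla \Dt^{1+\alpha} \pn\|_{L^2}^2,
\]
which after integration in time produces the pointwise bound on $\|\nabla \Dt^{1+\alpha} \pn(t)\|_{L^2}^2$; the term $\tau^\alpha c^2 \Dal \D \pn$ similarly yields $\tfrac{\tau^\alpha c^2}{2} \ddt \|\D \Dal \pn\|_{L^2}^2$ and hence the pointwise bound on $\|\D \Dal \pn(t)\|_{L^2}^2$. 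Using the identities $\Dtal \pn = \Ialpha \pttn$ and $\Dt^{1+\alpha} \pn = \textup{I}^{1-\alpha} \pttn$, the fractional damping contributes
\[
\delta \int_0^t (\Ialpha \D \pttn, \textup{I}^{1-\alpha} \D \pttn)_{L^2} \ds,
\]
for which a Fourier-side coercivity computation in the spirit of \eqref{coercivityI} delivers the degenerate lower bounds in the $\|\nabla \pttn\|_{H_t^{-(1-\alpha)/2}(L^2)}$ and $\|\Dt^{3/2} \D \pn\|_{L^2_t(L^2)}$ norms with constant $C(\alpha) \to 0$ as $\alpha \to 1^-$. The $\nLtwotLtwo{\Dt^{2+\alpha}\pn}$ bound is then read off directly from the equation by isolating the leading term.

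The principal obstacle is handling the lower-order coupling $((1+\sigma)\pttn, -\D \Dt^{1+\alpha} \pn)_{L^2}$. Integration by parts in space yields a coercive contribution plus products of the form $\int \sigma \nabla \pttn \cdot \nabla \Dt^{1+\alpha} \pn \dx$ and $\int \pttn \nabla \sigma \cdot \nabla \Dt^{1+\alpha} \pn \dx$. Using the Kato--Ponce inequality \eqref{prodruleest}, the embedding $H^{1+\alpha}(\Om) \hookrightarrow (W^{1,3} \cap L^\infty)(\Om)$, and the interpolation identity
\[
H^\alpha(0,T; \Honetwo) \cap H^{1+\alpha}(0,T; H_0^1(\Om)) \subseteq H^1(0,T; H^{1+\alpha}(\Om))
\]
noted above \eqref{Xfmgti}, these products are dominated by $\|\sigma\|_{X^\sigma_{\fmgti}}$ times a quantity controlled by the energy on the left-hand side of \eqref{energy_est_I}. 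The smallness assumption $\|\sigma\|_{X^\sigma_{\fmgti}} \leq \varrho$ then permits absorption provided $\varrho$ is sufficiently small. The right-hand side $f \in H^{\alpha-1/2}(0,T; L^2)$ is paired through a fractional duality naturally associated with the test order $\Dt^{1+\alpha}$, again using Kato--Ponce-type estimates.

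The remainder follows standard lines: I pass to the weak and weak-$\ast$ limits as $n \to \infty$ using the uniform bounds together with weak lower semicontinuity of all fractional norms involved, and verify the initial conditions as in the proof of Proposition~\ref{Prop:fMGT_III_lower}. Uniqueness for the homogeneous problem ($f = 0$, zero initial data) is established by replaying the energy argument, where the truncation-plus-mollification regularization used in Proposition~\ref{Prop:fMGT_III_lower} is needed to legitimize the test with $-\D \Dt^{1+\alpha}\psi$ at the low regularity level of a candidate weak solution in $\spaceI$.
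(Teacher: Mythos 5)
Your skeleton (Galerkin in the Dirichlet--Laplacian basis, Volterra reformulation for existence of $\pn$, multiplier $-\D \Dt^{1+\alpha}\pn$, coercivity of the damping yielding $\|\Dt^{3/2}\D\pn\|_{L^2_t(L^2)}$ with a constant degenerating as $\alpha\to1^-$) matches the paper, but the step that actually makes the proposition work --- the treatment of the variable coefficient --- has a genuine gap. After testing, the left-hand side only controls $\|\nabla\Dt^{1+\alpha}\pn(t)\|_{L^2}$, $\|\D\Dt^{\alpha}\pn(t)\|_{L^2}$ and, with degenerate constants, $\|\nabla\pttn\|_{H^{-(1-\alpha)/2}_t(L^2)}$ and $\|\Dt^{3/2}\D\pn\|_{L^2_t(L^2)}$; no positive-order norm of $\pttn$ or $\nabla\pttn$ (not even $L^2_t(L^2)$) is available. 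Hence your claim that $\int\sigma\,\nabla\pttn\cdot\nabla\Dt^{1+\alpha}\pn$ and $\int\pttn\,\nabla\sigma\cdot\nabla\Dt^{1+\alpha}\pn$ are ``dominated by $\|\sigma\|_{X^\sigma_\fmgti}$ times a quantity controlled by the energy'' does not follow from Kato--Ponce and the embedding alone, and I see no way to close it in that form. The paper's resolution is structurally different: $\sigma\pttn$ is kept in $\tilde f$ and the whole right-hand side is paired with $\Dt^{1+\alpha}\D\pn$ in the $H^{\alpha-1/2}_t\times H^{1/2-\alpha}_t$ duality (the latter norm being exactly what the damping controls); then $\|\sigma\pttn\|_{H^{\alpha-1/2}_t(L^2)}$ is estimated by Kato--Ponce with the specific exponents in \eqref{estsigmapsitt_I}, the resulting norms of $\pttn$ are bounded by $\|\Dt^{2+\alpha}\pn\|_{L^2_t(L^2)}$, and this quantity is in turn bounded through the fractional ODE representation \eqref{fracODE} of $\pttn$ together with $\|\Dt^{2-\alpha}\D\pn\|_{L^2_t(L^2)}\lesssim \kersing_{3/2-\alpha}*\dmp$, closing by a smallness bootstrap (this is where $\|\sigma\|_{X^\sigma_\fmgti}\le\varrho$ enters) and a generalized Gronwall inequality. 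Your remark that the $\Dt^{2+\alpha}$ bound is ``read off from the equation'' at the end misses that this representation is needed \emph{inside} the absorption of the $\sigma$-term, not afterwards. A further inaccuracy: $\tau^\alpha(\Dal\pttn,-\D\Dt^{1+\alpha}\pn)_{L^2}=\frac{\tau^\alpha}{2}\ddt\|\nabla\Dt^{1+\alpha}\pn\|^2_{L^2}$ is false for Caputo derivatives with nonzero data, since $\Dt^{2+\alpha}\pn=\Dt\Dt^{1+\alpha}\pn-p^{-\alpha}\pttn(0)$ (cf.\ \eqref{exchange_derivative}); the singular corrections $p^{-\alpha}\pttn(0)$, $p^{1-\alpha}\D\ptn(0)$ and $h_0$ are not negligible for $\alpha>1/2$ (they are not in $L^2_t$) and are one of the reasons the right-hand side must be estimated in $H^{\alpha-1/2}_t(L^2)$, as in \eqref{enid_rem}. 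Also, the $\|\nabla\pttn\|_{H^{-(1-\alpha)/2}_t(L^2)}$ contribution comes from the ``$1$'' part of $(1+\sigma)\pttn$ tested against $-\D\,\textup{I}^{1-\alpha}\pttn$ via \eqref{coercivityI}, not from the $\delta$-damping.

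The uniqueness argument you propose would also fail: the obstruction to testing with $-\D\Dt^{1+\alpha}\psi$ for a solution in $\spaceI$ is \emph{spatial}, not temporal regularity --- one only has $\Dt^{1+\alpha}\psi\in L^\infty(0,T;H^1_0(\Om))$ and $\Dal\psi\in L^\infty(0,T;\Honetwo)$, so $\D\Dt^{1+\alpha}\psi$ is not an admissible test function and time-mollification (the Temam-type device used in Proposition~\ref{Prop:fMGT_III_lower}) cannot create the missing spatial derivatives. The paper instead tests the homogeneous problem with the lower-order multiplier $\Dt^{1+\alpha}\psi$ (replacing $\D\to\nabla$ and $\nabla\to\mathrm{id}$ in the energy identity), which is compatible with the available regularity, and re-runs the $\sigma$-absorption in $H^{-1}(\Om)$-based norms using \eqref{ab_H-1} and again the fractional ODE representation; you would need to adopt this (or an equivalent) route to complete the uniqueness part.
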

\begin{proof}
	The proof follows by discretizing the problem with respect to the spatial variable, using smooth eigenfunctions of the Dirichlet-Laplacian as the basis.\\
	
	\noindent (I) \emph{Existence of an approximate solution.} For $n \in \N$ fixed, we first prove that the semi-discrete problem has a unique solution. We employ the same notation as in the proof of Proposition~\ref{Prop:fMGT_III_lower}; that is,
	\begin{equation}
	\begin{aligned}
	\psi^n(x,t) =\sum_{i=1}^n \xi^n_i(t)\phi_i(x),\quad \psi_{j}^n(x)= \sum_{i=1}^n \xi^n_{j, i} \phi_i(x), \quad j\in\{0,1,2\}.
	\end{aligned}
	\end{equation}
	Using the mass matrices $M$ and $M_\sigma=M_\sigma(t)$, the stiffness matrix $K$, and the source vector $\boldsymbol{f}$ defined in \eqref{matrices}, the next step is to rewrite the discretized problem as a system of integral Volterra equations. To this end, let 
	\[\boldsymbol{\mu}=\Dt^{2+\alpha}\boldsymbol{\xi} \qquad \text{and} \qquad  p^\gamma(t)=\frac{1}{\Gamma(\gamma+1)}t^\gamma.\]
	We can rely on the following identities:
	\[\Igamma w=p^{\gamma-1}*w,\qquad \Igamma p^i= p^{\gamma+i}, \qquad \Igamma \Is = \Igammas\]  to rewrite the vector solution and its derivatives as
	\[
	\begin{aligned}
	\boldsymbol{\xi}_{tt}&= \Ione\bxi_{ttt}+ p^0 \bxi_2 
	= \Ialpha \bmu+ p^0 \boldsymbol{\xi}_2
	= p^{\alpha-1}*\bmu + p^0 \bxi_2,\\
	\boldsymbol{\xi}_t&= \Ione \bxi_{tt}+ p^0 \bxi_1  
	= \textup{I}^{1+\alpha}\boldsymbol{\mu}+ p^1 \boldsymbol{\xi}_2+ p^0 \bxi_1
	= p^{\alpha}*\bmu + p^1 \boldsymbol{\xi}_2+ p^0 \boldsymbol{\xi}_1,\\
	\boldsymbol{\xi}&= \Ione \boldsymbol{\xi}_{t}+ p^0 \boldsymbol{\xi}_0 
	= \textup{I}^{2+\alpha}\boldsymbol{\mu}+ p^2 \boldsymbol{\xi}_2+ p^1 \boldsymbol{\xi}_1+ p^0 \boldsymbol{\xi}_0
	= p^{\alpha+1}*\bmu + p^2 \boldsymbol{\xi}_2+ p^1 \boldsymbol{\xi}_1+ p^0 \boldsymbol{\xi}_0.
	\end{aligned}
	\]
	Furthermore, we can rewrite the fractional derivatives as
	\[
	\begin{aligned}
	\Dt^{2-\alpha}\boldsymbol{\xi}&= \Ialpha\boldsymbol{\xi}_{tt} 
	= \Italpha \boldsymbol{\mu}+ \Ialpha p^0 \boldsymbol{\xi}_2
	= p^{2\alpha-1}*\bmu + p^\alpha \boldsymbol{\xi}_2,\\
	\Dt^{\alpha}\boldsymbol{\xi}&= \textup{I}^{1-\alpha}\boldsymbol{\xi}_t
	=I^{2}\bmu+ \textup{I}^{1-\alpha} p^1 \bxi_2+ \textup{I}^{1-\alpha} p^0 \boldsymbol{\xi}_1
	= p^{1}*\bmu +p^{2-\alpha} \boldsymbol{\xi}_2+  p^{1-\alpha} \boldsymbol{\xi}_1.\\
	\end{aligned}
	\]
	Therefore, the semi-discrete problem can be equivalently rewritten as a system of Volterra integral equations:
	\begin{equation} \label{Volterra_system_I}
	\begin{aligned}
	&\begin{multlined}[t]\tau^\alpha M \bmu 
	+ M_\sigma(t)\Bigl(p^{\alpha-1}*\bmu + p^0 \bxi_2\Bigr)\\
	+c^2 K \Bigl(p^{\alpha+1}*\bmu + p^2 \boldsymbol{\xi}_2+ p^1 \boldsymbol{\xi}_1+ p^0 \boldsymbol{\xi}_0\Bigr)\\
	+\tau^\alpha c^2 K \Bigl(p^{1}*\bmu +p^{2-\alpha} \boldsymbol{\xi}_2+  p^{1-\alpha} \boldsymbol{\xi}_1\Bigr)
	+\delta K (p^{2\alpha-1}*\bmu + p^\alpha \boldsymbol{\xi}_2)=f.\end{multlined}
	\end{aligned}
	\end{equation}
	Thus, unique solvability of this system in $L^2(0,T)$ follows from~\cite[Theorem 4.2, p. 241 in \S 9]{GLS90}. Then from	\[ \left \{
	\begin{aligned}
	&\Dt^{\alpha}\boldsymbol{\xi}_{tt} =\bmu \in L^2(0,T), \quad \alpha \in (\tfrac12, 1)\\
	& \boldsymbol{\xi}_{tt}(0)=\bxi_{2}
	\end{aligned} \right.
	\]
	we have a unique $\bxi_{tt} \in H^\alpha(0,T)$; cf.~\cite[\S 3.3]{kubica2020time}. Combined with the initial conditions $(\bxi_0, \bxi_1)$, this yields a unique $\bxi \in H^{2+\alpha}(0,T)$ and further implies the existence of a unique $\pn \in H^{2+\alpha}(0,T; V_n)$. \\
	
	\noindent (II) \emph{A priori energy analysis.}  We next focus on deriving a uniform energy estimate for $\pn$. We will make use of estimate \eqref{coercivityI} to treat the fractional terms; that is,
	\begin{equation}
	\int_0^t \langle \textup{I}^\rho w(s),  w(s) \rangle \ds \geq \cos ( \tfrac{\pi\rho}{2} ) \| w \|_{H^{-\rho/2}(0,t)}^2 
	\end{equation} 
	for $\rho\in(0,1)$, as well as the identity $\int_0^t \langle w_t(s),  w(s) \rangle \ds = \frac12|w|^2\,\big\vert_0^t$. Thus, the rule of thumb is that for a coercivity estimate on $\int_0^t \Dt^r(s) w\, \Dt^\rho w(s)\ds$  to yield a non-negative lower bound (up to initial data), the difference $|r-\rho|$ between the fractional orders must not exceed one. We will consider the multiplier \[-\Delta \Dt^{1+\alpha}\pn(t)= \sum_{i=1}^n \Dt^{1+\alpha} \xi^n_i(t) \,\lambda_i \phi_i(x) \in V_n,\] for which this rule applies and yields non-negative contributions on the left-hand side for the terms containing $\Dt^{2+\alpha}\pn$, $\pttn$, $-\Dt^\alpha\Delta \pn$, and $-\Dt^{2-\alpha} \Delta \pn$. Multiplying the semi-discrete equation with $-\Delta \Dt^{1+\alpha} \pn$ and integrating over space and $(0,t)$ at first leads to
	\begin{equation}
	\begin{aligned}
	&\, \begin{multlined}[t]-\tau^\alpha \intt \prodLtwo{ \Dt^{2+\alpha} \psi^n}{\D \Dt^{1+\alpha} \pn}\ds-\intt \prodLtwo{ \pttn}{\D \Dt^{1+\alpha} \pn}\ds \\
	+ \tau^\alpha c^2\intt \prodLtwo{\Dal \D \pn}{\D \Dt^{1+\alpha} \pn}\ds+\delta \intt \prodLtwo{\Dt^{2-\alpha} \D \pn}{\D \Dt^{1+\alpha} \pn}\ds
	\end{multlined} \\
	=&\, \begin{multlined}[t] -\intt (f-\sigma \pttn+c^2 \D \pn, \Delta \Dt^{1+\alpha} \pn)\ds.
	\end{multlined}
	\end{aligned}
	\end{equation}
	We next exchange the order of differentiation in the first and third term on the left as follows:
	\begin{equation}\label{exchange_derivative}
	\Dt^{2+\alpha}\psi^n = \Dt \Dt^{1+\alpha}\psi^n-p^{-\alpha}\psi^n_{tt}(0)\,, \qquad
	\Dt^{1+\alpha}\psi^n = \Dt \Dt^\alpha\psi^n-p^{-\alpha}\psi^n_{t}(0)\,, 
	\end{equation}
	and integrate by parts to obtain 
	\[
	\begin{aligned}
	&\intt \prodLtwo{ \Dt^\alpha\D\pn(s)}{p^{-\alpha}(s)\D \ptn(0)}\ds\\
	=&\, \begin{multlined}[t]-\intt \prodLtwo{ \Dt^{1+\alpha}\D\pn(s)+p^{-\alpha}(s)\D \ptn(0)}{p^{1-\alpha}\D \ptn(0)}\ds\\
	+p^{1-\alpha}(t)\prodLtwo{ \Dt^\alpha\D\pn(t)}{\D \ptn(0)} \end{multlined}\\
	=&\, -\intt \prodLtwo{ \Dt^{1+\alpha}\D\pn(s)}{p^{1-\alpha}(s)\D \ptn(0)}\ds + h_0(t),
	\end{aligned}
	\]
	where we have introduced
	\[h_0(t)=\frac{t^{2-2\alpha}}{2\Gamma(2-2\alpha)^2} \|\D \ptn(0)\|_{L^2}^2
	+p^{1-\alpha}(t)\prodLtwo{ \Dt^\alpha\D\pn(t)}{\D \ptn(0)}.\]
	Thus, we arrive at the following identity:
	\begin{equation}
	\begin{aligned}
	&\, \begin{multlined}[t]-\tau^\alpha \intt \prodLtwo{ \Dt \Dt^{1+\alpha}\psi^n}{\D \Dt^{1+\alpha} \pn}\ds-\intt \prodLtwo{ \pttn}{\D \Dt^{1+\alpha} \pn}\ds \\
	+ \tau^\alpha c^2\intt \prodLtwo{\Dal \D \pn}{\D \Dt \Dt^\alpha\psi^n}\ds+\delta \intt \prodLtwo{\Dt^{2-\alpha} \D \pn}{\D \Dt^{1+\alpha} \pn}\ds
	\end{multlined} \\
	=&\, \begin{multlined}[t] -\intt \prodLtwo{f-\sigma \pttn+c^2 \D \pn+\tau^\alpha p^{-\alpha}\pttn(0)-\tau^\alpha c^2 p^{1-\alpha}\D \ptn(0)}
	{\Delta \Dt^{1+\alpha} \pn}\ds\\
	\hspace*{-5cm}+\tau^\alpha c^2 h_0(t).
	\end{multlined}
	\end{aligned}
	\end{equation}
	We note that \[\Dt^{1+\alpha}\pn=\pttn=0 \text{ on } \partial \Om\] with our choice of the basis functions. Additionally using $\Dt^{1+\alpha}\pn=\textup{I}^{1-\alpha}\psi^n_{tt}$ in the second term on the left and integrating by parts in space and time yields 
	\begin{equation}\label{enid_rem}
	\begin{aligned}
&\begin{multlined}[t] \,\frac{\tau^\alpha}{2}\nLtwo{\nabla \Dt^{1+\alpha}\pn(s)}^2 \big\vert_0^t+\int_0^t\prodLtwo{\nabla\pttn(s)}{\textup{I}^{1-\alpha}  \nabla\pttn(s)}\ds
	\\	+\frac{\tau^\alpha c^2}{2}\nLtwo{\D \Dt^{\alpha}\pn(s)}^2 \Big \vert_0^t+\delta \dmp \end{multlined} \\
	=&\, 
	-\int_0^t\prodLtwo{\tilde{f}(s)}{\D \Dt^{1+\alpha}\pn(s)}\ds
	+\tau^\alpha c^2 h_0(t),  
	\end{aligned}
	\end{equation}
	where we have introduced the short-hand notation
	\begin{equation}\label{ftilde}
	\tilde{f}=f+c^2\D\pn-\sigma\pttn +\tau^{\alpha}\left(p^{-\alpha}\pttn(0)-c^2p^{1-\alpha}\D\ptn(0)\right)
	\end{equation}
	and
	\[
	d=\intt \prodLtwo{\Dt^{2-\alpha} \D \pn}{\D \Dt^{1+\alpha} \pn}\ds.
	\]
	By the identity
	\begin{equation}
	\begin{aligned}
	\Dt^{2-\alpha}\pn=&\, \Ialpha\psi_{tt}=\textup{I}^{2\alpha-1}\textup{I}^{1-\alpha}\pttn=\textup{I}^{2\alpha-1}\Dt^{1+\alpha}\pn,
	\end{aligned}
	\end{equation}
	we know that
	\begin{align} \label{damping_fMGT_I}
	\dmp = 	\int_0^t\prodLtwo{\textup{I}^{2\alpha-1} \Dt^{1+\alpha}\Delta\pn(s)}{\Dt^{1+\alpha}\Delta\pn(s)}\ds. 
	\end{align} 
	Since $\alpha>1/2$, this term can be estimated from below using the fact that $\textup{I}^\gamma:H^{-\gamma}(0,t)\to L^2(0,T)$ is an isomorphism for $\gamma \in [0, 1/2)$; see~\cite[Theorem 1]{gorenflo1999operator}. Therefore,
	\[
	\begin{aligned}
	\frac{\dmp}{\cos(\pi(\alpha-1/2))} \geq&\, 
	\|\Dt^{1+\alpha}\D \pn\|_{H_t^{1/2-\alpha}(L^2)}^2\\
	\sim&\, 
	\|\textup{I}^{\alpha-1/2}\Dt^{1+\alpha}\D \pn\|_{L^2_t(L^2)}^2\sim 
	\|\Dt^{3/2}\D \pn\|_{L_t^2(L^2)}^2. 
	\end{aligned}
	\]
	We estimate the $\tilde{f}$ term on the right-hand side of \eqref{enid_rem} with a view on the possibility of bounding it by $\dmp$ as follows: 
	\[
	\begin{aligned}
	\left|\int_0^t\prodLtwo{\tilde{f}(s)}{\Dt^{1+\alpha}\D \pn(s)}\ds\right|
	\leq 
	\frac{1}{2\epsilon}\|\tilde{f}\|_{H^{\alpha-1/2}_t(L^2)}^2+\frac{\epsilon}{2}\|\Dt^{1+\alpha}\D \pn\|_{H^{1/2-\alpha}_t(L^2)}^2.
	\end{aligned}
	\]
	It remains to estimate the terms within $\tilde{f}$ in this norm; that is, to bound
	\begin{equation}
	\|f+c^2\D\pn-\sigma\pttn +\tau^{\alpha}\left(p^{-\alpha}\pttn(0)-c^2p^{1-\alpha}\D\ptn(0)\right)\|_{H^{\alpha-1/2}_t(L^2)}.
	\end{equation}
	For the $c^2$ term, it is readily checked that the respective contribution of $c^2\D \pn$ to the above norm of $\tilde{f}$ (cf. \eqref{ftilde}) can be bounded by means of $c^2 \|\Dt^\alpha\D\pn\|_{L^2(L^2)}$ as follows:
	\[
	\begin{aligned}
	\|\D \pn \|_{H^{\alpha-1/2}_t(L^2)}
	\lesssim&\, \|\Dt^{\alpha-1/2}\D \pn \|_{L^2_t(L^2)} + \nLtwo{\D \pn_0}\\
	=&\, \|\kersing_{1/2}*\Dt^\alpha\D \pn \|_{L^2_t(L^2)} + \nLtwo{\D \pn_0}\\
	\leq&\, \|\kersing_{1/2}\|_{L^1(0,T)} \|\Dt^\alpha\D \pn \|_{L^2_t(L^2)} + \nLtwo{\D \pn_0}\,,
	\end{aligned}
	\]
	and therefore tackled by the second term on the left-hand side of \eqref{enid_rem} together with Gronwall's inequality.\\
	\indent By the Kato--Ponce inequality \eqref{prodruleest} with 
	\[\rho=\alpha-1/2,\qquad (p_1, q_1)=\left(\frac{2}{2 \alpha -1}, \frac{1}{1-\alpha}\right), \qquad (p_2, q_2)=(2, \infty),\]
	we obtain 
	\begin{equation}\label{estsigmapsitt_I}
	\begin{aligned}
	&\|\sigma\pttn\|_{H_t^{\alpha-1/2}(L^2)}
	\\
	\lesssim&\,
	\|\sigma\|_{W_t^{\alpha-1/2,\frac{2}{2\alpha-1}}(L^\infty)}\|\pttn\|_{L_t^{\frac{1}{1-\alpha}}(L^2)}
	+\|\sigma\|_{L_t^{2}(L^\infty)}\|\pttn\|_{W_t^{\alpha-1/2, \infty}(L^2)}.
	\end{aligned}
	\end{equation}
	By the Sobolev embedding $L^2(0,T)\hookrightarrow W^{\alpha-1/2,\frac{2}{2 \alpha-1}}(0,T)$, we have
	\[
	\|\sigma\|_{W_t^{\alpha-1/2, \frac{2}{2 \alpha-1}}(L^\infty)}
	\leq C_{H^2,L^\infty}^\Om\|\sigma\|_{X^\sigma_\fmgti}\,, \quad
	\|\sigma\|_{L_t^2(L^\infty)}
	\leq C_{H^2,L^\infty}^\Om\|\sigma\|_{X^\sigma_\fmgti}\,.
	\]
	To further estimate the norms of $\pttn$ in \eqref{estsigmapsitt_I}, we will use the leading time derivative term $\Dt^{2+\alpha}\pn$ as well as its representation via the PDE. That is, we rely on the following Sobolev embeddings:
	\begin{equation}
	\begin{aligned}
	\|\pttn\|_{L_t^{\frac{1}{1-\alpha}}(L^2)}\lesssim&\,
	\|\pttn\|_{H^{\alpha-1/2}_t(L^2)}\lesssim
	\|\Dt^{2+\alpha}\pn\|_{H^{-1/2}_t(L^2)}+\|\pttn(0)\|_{L^2}
	\end{aligned}
	\end{equation}
	and
	\begin{equation}
	\begin{aligned}
	\|\pttn\|_{W_t^{\alpha-1/2, \infty}(L^2)}\lesssim&\, \|\pttn\|_{H^\alpha_t(L^2)}\lesssim \|\Dt^{2+\alpha}\pn\|_{L^2_t(L^2)}+\|\pttn(0)\|_{L^2},
	\end{aligned}
	\end{equation}
	where
	$\pttn$ satisfies the fractional ODE 
	\[
	\tau^\alpha\Dt^\alpha \pttn + \pttn = -r\mbox{ with }r=\sigma \pttn - c^2\Delta\psi -\tau^\alpha c^2\Dt^\alpha\Delta \pn - \delta \Dt^{2-\alpha} \Delta \pn -f
	\]
	and therefore 
	\begin{equation}\label{fracODE}
	\Dt^\alpha \pttn = \tau^{-\alpha}\Bigl(-E_{\alpha,1}(-(\tfrac{t}{\tau})^\alpha)\pttn(0)+\int_0^t E_{\alpha,\alpha}(-(\tfrac{t-s}{\tau})^\alpha) r(s)\ds-r(t)\Bigr);
	\end{equation} 
	see, e.g.,~\cite[\S 3]{kubica2020time}. Thus, we have
	\begin{equation}\label{Dt2plusalphapn}
	\begin{aligned}
	&\|\Dt^{2+\alpha}\pn\|_{H^{-1/2}_t(L^2)}\lesssim
	\|\Dt^{2+\alpha}\pn\|_{L^2_t(L^2)}\lesssim \|r\|_{L^2_t(L^2)}+\|\pttn(0)\|_{L^2}\\
	\leq&\,\begin{multlined}[t]\|\sigma \pttn\|_{L^2_t(L^2)} + c^2\|\Delta\psi\|_{L^2_t(L^2)} +\tau^\alpha c^2\|\Dt^\alpha\Delta \pn\|_{L^2_t(L^2)}\\
	+ \delta \|\Dt^{2-\alpha} \Delta \pn\|_{L^2_t(L^2)} +\|f\|_{L^2(L^2)}+\|\pttn(0)\|_{L^2}.\end{multlined}
	\end{aligned}
	\end{equation}
	In here, the terms with factors $c^2$, $\tau^\alpha c^2$, and $\delta$ can be controlled -- in a (generalized) Gronwall inequality fashion --  by left-hand side terms in \eqref{enid_rem}; to see this for the latter, consider  
	\[\|\Dt^{2-\alpha} \Delta \pn\|_{L^2_t(L^2)}=\|\textup{I}^{\alpha-1/2}\Dt^{3/2} \Delta \pn\|_{L^2_t(L^2)}\lesssim \kersing_{3/2-\alpha}*\dmp.\]
	Thus, from \eqref{estsigmapsitt_I} to \eqref{Dt2plusalphapn}, we have obtained an estimate of the form
	\[
	\begin{aligned}
	\|\sigma\pttn\|_{H_t^{\alpha-1/2}(L^2)}\lesssim&\, \|\sigma\|_{X^\sigma_\fmgti} \left(\|\Dt^{2+\alpha}\pn\|_{L^2_t(L^2)}+\|\pttn(0)\|_{L^2}\right)\\
	\lesssim&\,  \|\sigma\|_{X^\sigma_\fmgti} \left(\|\sigma\pttn\|_{H_t^{\alpha-1/2}(L^2)} 
+ \textup{rhs} \right),
	\end{aligned}
	\]
where 
\[\textup{rhs}:=
 c^2\|\Delta\pn\|_{L^2_t(L^2)} +\tau^\alpha c^2\|\Dt^\alpha\Delta \pn\|_{L^2_t(L^2)}
	+ \kersing_{3/2-\alpha}*\dmp +\|f\|_{L^2(L^2)} +\|\pttn(0)\|_{L^2}.
\]
	Thus, provided $\|\sigma\|_{X^\sigma_\fmgti}$ is sufficiently small (where the bound can be chosen independent of $\alpha\in[\alpha_0,1)$ for $\alpha_0>1/2$),
	the term $\|\sigma\pttn\|_{H_t^{\alpha-1/2}(L^2)}$ is bounded by 
a multiple of \textup{rhs}. By combining this with \eqref{enid_rem}, \eqref{Dt2plusalphapn}, and Gronwall's inequality in its generalized version, see, e.g., \cite[Lemma 7.2]{kubica2020time}, we therefore obtain the following estimate:
	\begin{equation} \label{discrete_est_I}
	\begin{aligned}
	& \begin{multlined}[t]\|\Dt^{2+\alpha}\pn\|_{L^2_t(L^2)}^2
	+ \nLtwo{\nabla \Dt^{1+\alpha}\pn(t)}^2
	+\nLtwo{\D \Dt^{\alpha}\pn(t)}^2\\
	+ C(\alpha)(\|\nabla\pttn\|_{{H_t^{-(1-\alpha)/2}}(L^2)}^2
	+ \|\Dt^{3/2}\D \pn\|_{L^2_t(L^2)}^2) \end{multlined}
	\\
	\lesssim&\, \|f\|_{H^{\alpha-1/2}(L^2)}^2+\nLtwo{\D \psi_0}^2+\nLtwo{\D \psi_1}^2+\nLtwo{\nabla \psi_2}^2,
	\end{aligned}
	\end{equation}
	where we have also relied on the uniform boundedness of the approximate data. 
{Here the constant $C(\alpha)$ tends to zero as $\alpha\to1^-$, since it contains the factor $\cos ( \tfrac{\pi(1-\alpha)}{2} )$ from the coercivity estimate \eqref{coercivityI}.
}
\\
	
	\noindent (III) \emph{Passing to the limit.} Thanks to the uniform bound \eqref{discrete_est_I}, there exists a subsequence, which we do not relabel, such that
	\begin{equation} \label{weak_limits_I}
	\begin{alignedat}{4} 
	\Dt^{2+\alpha} \pn  &\relbar\joinrel\rightharpoonup \Dt^{2+\alpha} \psi  &&\text{ weakly}  &&\text{ in } &&L^2(0,T; L^2(\Omega)),  \\
	\Dal \D \pn &\relbar\joinrel\rightharpoonup \Dal  \D  \psi &&\text{ weakly-$\star$} &&\text{ in } &&L^\infty(0,T; L^2(\Om)), \\
	\Dt^{1+\alpha}\nabla \pn  &\relbar\joinrel\rightharpoonup \Dt^{1 +\alpha} \nabla \psi &&\text{ weakly-$\star$}  &&\text{ in } &&L^\infty(0,T; L^2(\Omega)),  \\
	\Dt^{3/2} \D \pn &\relbar\joinrel\rightharpoonup \Dt^{3/2}\D\psi &&\text{ weakly} &&\text{ in } &&L^2(0,T; L^2(\Omega)).
	\end{alignedat} 
	\end{equation} 
	Furthermore,
	\begin{equation} \label{weak_limits_I_1}
	\begin{alignedat}{4} 
	(1+\sigma)\pttn  &\relbar\joinrel\rightharpoonup  (1+\sigma)\ptt  &&\text{ weakly} &&\text{ in } &&L^2(0,T; L^2(\Omega)),\\
	\D \pn &\relbar\joinrel\rightharpoonup  \D \psi &&\text{ weakly} &&\text{ in } &&L^2(0,T; L^2(\Omega)), \\
	\Dtal \D \pn  &\relbar\joinrel\rightharpoonup  \Dtal \D \psi &&\text{ weakly} &&\text{ in } &&L^2(0,T; L^2(\Omega)).  
	\end{alignedat} 
	\end{equation}
	Thus, we can pass to the limit in the usual way in the semi-discrete problem. Further, weak/weak-$\star$ lower semi-continuity of norms implies that the solution we constructed satisfies \eqref{energy_est_I} a.e. in time.\\

	\noindent (IV) \emph{Attainment of the initial conditions.} Similarly to step (IV) in the proof of Proposition~\ref{Prop:fMGT_III_lower}, we show that $\psi$ attains its initial conditions by, on one hand concluding from \eqref{weak_limits_I} that
	\[
	\begin{aligned}
	&\pn (0) \relbar\joinrel\rightharpoonup \psi(0) \text{ weakly}  \text{ in } \Honetwo,\\  
	&\ptn (0) \relbar\joinrel\rightharpoonup \psi_t(0) \text{ weakly}  \text{ in } H_0^1(\Om),\\  
	&\pttn (0) \relbar\joinrel\rightharpoonup \psi_{tt}(0) \text{ weakly}  \text{ in } L^ 2(\Om),
	\end{aligned}	\]
	and, on the other hand, $\pn(0) \rightarrow \psi_0$ in $\Honetwo$, $\ptn(0) \rightarrow \psi_1$ in $H_0^1(\Om)$, $\pttn(0) \rightarrow \psi_1$ in $L^2(\Om)$ based on our choice of the approximate data. Thus the initial data are attained in an $\Honetwo\times H_0^1(\Om)\times L^2(\Om)$ sense.\\

	\noindent (IV) \emph{Uniqueness.} The fact that the obtained solution is unique follows by testing the homogeneous problem
	\[
	\tau^\alpha \Dt^{2+\alpha}\psi + (1+\sigma) \psi_{tt} - c^2\Delta\psi -\tau^\alpha c^2\Dt^\alpha\Delta \psi - \delta \Dt^{2-\alpha} \Delta \psi = 0
	\] (with zero initial data) with $\Dt^{1+\alpha} \pt$. Analogously to above, but replacing $\D\to\nabla$ and $\nabla\to\mbox{id}$, we obtain
	\[
	\begin{aligned}
	& \begin{multlined}[t]\frac{\tau^\alpha}{2}\nLtwo{\Dt^{1+\alpha}\psi(t)}^2
	+ \frac{\tau^\alpha c^2}{2}\nLtwo{\nabla\Dt^{\alpha}\psi(t)}^2+C(\alpha)(\|\psi_{tt}\|_{H_t^{-(1-\alpha)}(L^2)}^2
	+  \|\Dt^{3/2} \nabla \psi\|_{L^2_t(L^2)}^2)\end{multlined}
	\\
	\leq&\, \left |\int_0^t\prodLtwo{c^2\D\psi-\sigma\psi_{tt}}{\Dt^{1+\alpha}\psi}\ds \right| \\
	\leq&\, \frac{1}{2\epsilon}\Bigl(\|c^2\nabla\psi\|_{H_t^{\alpha-1/2}(L^2)}+\|\sigma\psi_{tt}\|_{H_t^{\alpha-1/2}(H^{-1})}\Bigr)^2+\frac{\epsilon}{2} \|\Dt^{3/2}\nabla\psi\|_{L^2_t(L^2)}^2,
	\end{aligned}
	\]
	where 
	\[
	\begin{aligned}
	\|c^2\nabla\psi\|_{H^{\alpha-1/2}(L^2)}&\lesssim \|\nabla\Dt^{\alpha}\psi\|_{L^2(L^2)}.
	\end{aligned}
	\]
	Further, on account of the following estimate:
	\begin{equation} \label{ab_H-1}
	\begin{aligned}
	\|ab\|_{H^{-1}(\Om)}
	=& \, \|a\|_{H^{-1}(\Om)}\sup_{v\in H_0^1(\Om)\setminus\{0\}} \|v\|_{H_0^1(\Om)}^{-1} \nLtwo{v \nabla b + b\nabla v}\\
	\leq&\, \|a\|_{H^{-1}(\Om)}(C_{H^1,L^6}\|\nabla b\|_{L^3}+\|b\|_{L^\infty}),
	\end{aligned}
	\end{equation}
	we have, similarly to \eqref{estsigmapsitt_I},
	\begin{equation}\label{sigmaptt}
	\begin{aligned}
	\|\sigma\ptt\|_{H_t^{\alpha-1/2}(H^{-1})}
	\lesssim\,
	\|\sigma\|_{X^\sigma_{\textup{fMGT I}}}\|\ptt\|_{L_t^{\frac{1}{1-\alpha}}(H^{-1})}
	+\|\sigma\|_{X^\sigma_{\textup{fMGT I}}}\|\ptt\|_{W_t^{\alpha-1/2, \infty}(H^{-1})}.
	\end{aligned}
	\end{equation}
	Again using \eqref{fracODE} with $\psi$ in place of $\pn$ yields
	\begin{equation}
	\begin{aligned}
	&\|\ptt\|_{L_t^{\frac{1}{1-\alpha}}(H^{-1})}	+\|\ptt\|_{W_t^{\alpha-1/2, \infty}(H^{-1})} \lesssim\, \|\Dt^{2+\alpha} \psi\|_{L_t^2(H^{-1})}\\
	\lesssim&\, \|\sigma \psi_{tt} - c^2\Delta\psi -\tau^\alpha c^2\Dt^\alpha\Delta \psi - \delta \Dt^{2-\alpha} \Delta \psi\|_{L_t^2(H^{-1})},
	\end{aligned}
	\end{equation}
	where $\|\sigma \psi_{tt}\|_{L_t^2(H^{-1})}\lesssim \|\sigma \psi_{tt}\|_{H_t^{\alpha-1/2}(H^{-1})}$. Therefore, these terms can be absorbed for small enough $\sigma$ by the left-hand side or handled by Gronwall's inequality to conclude that $\psi=0$.
\end{proof}
We next prove a well-posedness result for the corresponding nonlinear problem. To guarantee that the coefficient $\sigma$ is small enough in the fixed-point iteration, we impose a smallness condition on the data.
\begin{theorem}[Local well-posedness of the fJMGT--W I equation] \label{Thm:fJMGT_W_I} Let $\alpha \in [\alpha_0,1)$ for some $\alpha_0 > 1/2$ and $T>0$. Further, assume that $ f \in H^{\alpha-1/2}(0,{T};L^2(\Om))$. There exists $\varrho=\varrho(\alpha, T)>0$, such that if
	\begin{equation}
	\|f\|^2_{H^{\alpha-1/2}(L^2)}+\|\psi_0\|_{H^2}^2+\|\psi_1\|_{H^2}^2+\|\psi_2\|_{H^1}^2 \leq \varrho^2,
	\end{equation}
	then the initial boundary-value problem
	\begin{equation}
	\left \{
	\begin{aligned}
	\tau^\alpha \Dt^{2+\alpha}\psi + &(1+2k\pt) \psi_{tt} - c^2\Delta\psi -\tau^\alpha c^2\Dt^\alpha\Delta \psi - \delta \Dt^{2-\alpha} \Delta \psi = f\hspace*{-2mm}&&\text{in }\Omega\times(0,T), \\[1mm]
	&\psi=\,0&&\text{on }\partial\Omega\times(0,T),\\[1mm]
	&(\psi, \psi_t, \psi_{tt})=\,(\psi_0, \psi_1, \psi_2)&&\mbox{in }\Omega\times \{0\},
	\end{aligned} \right.
	\end{equation}
	has a unique solution $\psi \in X_{\textup{fMGT I}}$, which satisfies	
	\begin{equation} \label{energy_est2_fJMGT_W_I}
	\begin{aligned}
	\begin{multlined}[t] \|\psi\|^2_{X_{\textup{fMGT I}}}
	\end{multlined}
	\lesssim\,\begin{multlined}[t] 	\|f\|^2_{H^{\alpha-1/2}(L^2)}+\|\psi_0\|_{H^2}^2+\|\psi_1\|_{H^2}^2+\|\psi_2\|_{H^1}^2. \end{multlined}
	\end{aligned}
	\end{equation}
\end{theorem}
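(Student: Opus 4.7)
The plan is to apply Banach's Fixed-point theorem to the mapping $\mathcal{T}: w \mapsto \psi$, where $\psi$ solves the linearized fMGT I equation \eqref{linearized_fMGTI} with variable coefficient $\sigma = 2k w_t$, source $f$, and the prescribed initial data. The natural set to iterate on is
\begin{equation*}
B_R := \{w \in X_{\textup{fMGT I}} : \|w\|_{X_{\textup{fMGT I}}} \leq R,\ (w, w_t, w_{tt})\vert_{t=0} = (\psi_0, \psi_1, \psi_2)\},
\end{equation*}
equipped with the metric induced by $\|\cdot\|_{X_{\textup{fMGT I}}}$. By the embedding noted right after \eqref{Xfmgti}, namely $\|2k w_t\|_{X^\sigma_{\textup{fMGT I}}} \lesssim |k|\,\|w\|_{X_{\textup{fMGT I}}}$, choosing $R$ sufficiently small (independently of $\alpha \in [\alpha_0, 1)$) forces $\|\sigma\|_{X^\sigma_\fmgti}$ below the smallness threshold $\varrho$ required in Proposition~\ref{Prop:fMGT_I}. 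That proposition then produces a unique $\psi = \mathcal{T}(w) \in X_{\textup{fMGT I}}$ together with the linear estimate \eqref{energy_est_I}. Fixing such an $R$ and then shrinking the data bound $\varrho = \varrho(\alpha, T)$ so that the constant on the right-hand side of \eqref{energy_est_I} times $\varrho^2$ is at most $R^2$ makes $\mathcal{T}$ a self-mapping on $B_R$.

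For contractivity, take $w^{(1)}, w^{(2)} \in B_R$, set $\psi^{(i)} = \mathcal{T}(w^{(i)})$, and write $\bar\psi = \psi^{(1)} - \psi^{(2)}$, $\bar w = w^{(1)} - w^{(2)}$. Then $\bar\psi$ satisfies, with zero initial data,
\begin{equation*}
\tau^\alpha \Dt^{2+\alpha}\bar\psi + (1 + 2k w_t^{(1)})\bar\psi_{tt} - c^2\Delta\bar\psi - \tau^\alpha c^2\Dt^\alpha\Delta\bar\psi - \delta \Dt^{2-\alpha}\Delta\bar\psi = -2k\,\bar w_t\,\psi^{(2)}_{tt}.
\end{equation*}
Since the coefficient $2k w_t^{(1)}$ is still admissible in Proposition~\ref{Prop:fMGT_I}, applying \eqref{energy_est_I} to $\bar\psi$ yields
\begin{equation*}
\|\bar\psi\|_{X_{\textup{fMGT I}}} \lesssim \|2k\,\bar w_t\,\psi^{(2)}_{tt}\|_{H^{\alpha-1/2}(L^2)}.
\end{equation*}

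The heart of the argument, and the main obstacle, is controlling this $H^{\alpha-1/2}(L^2)$ product norm by $\|\bar w\|_{X_{\textup{fMGT I}}}$ with a small prefactor. I would recycle the Kato--Ponce estimate \eqref{estsigmapsitt_I} with $\bar w_t$ playing the role of $\sigma$ and $\psi^{(2)}_{tt}$ the role of $\pttn$, together with the Sobolev embeddings
\begin{equation*}
\|\psi^{(2)}_{tt}\|_{L_t^{1/(1-\alpha)}(L^2)} + \|\psi^{(2)}_{tt}\|_{W_t^{\alpha-1/2,\infty}(L^2)} \lesssim \|\Dt^{2+\alpha}\psi^{(2)}\|_{L^2(L^2)} + \|\psi_2\|_{L^2}
\end{equation*}
and the fractional ODE representation \eqref{fracODE} used in the proof of Proposition~\ref{Prop:fMGT_I}. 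The factors involving $\bar w_t$ are bounded by $\|\bar w\|_{X_{\textup{fMGT I}}}$ through the same chain of embeddings exploited in \eqref{Xfmgti}. This yields
\begin{equation*}
\|2k\,\bar w_t\,\psi^{(2)}_{tt}\|_{H^{\alpha-1/2}(L^2)} \lesssim |k|\,\|\bar w\|_{X_{\textup{fMGT I}}}\,\|\psi^{(2)}\|_{X_{\textup{fMGT I}}} \leq C|k|R\,\|\bar w\|_{X_{\textup{fMGT I}}}.
\end{equation*}

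A final reduction of $R$ (and correspondingly of $\varrho$) ensures $\theta := C|k|R < 1$, delivering strict contractivity. Since $B_R$ with the induced metric is a closed subset of the complete normed space $X_{\textup{fMGT I}}$ (the affine constraint on the initial trace is closed under the $X_{\textup{fMGT I}}$-topology, by the continuous trace embeddings into the initial-value spaces), Banach's theorem yields a unique fixed point $\psi \in B_R$, which is the desired solution. Estimate \eqref{energy_est2_fJMGT_W_I} then follows directly from \eqref{energy_est_I} applied with $\sigma = 2k\psi_t$ and the given source $f$, since the right-hand side of \eqref{energy_est_I} already matches the claimed bound.
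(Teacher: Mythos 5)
Your setup (the map $\mathcal{T}:w\mapsto\psi$ via Proposition~\ref{Prop:fMGT_I} with $\sigma=2kw_t$, smallness of $R$ to meet the $\|\sigma\|_{X^\sigma_\fmgti}$ threshold, and smallness of $\varrho$ for the self-map) coincides with the paper. The divergence, and the gap, is in the contraction step: you try to contract in the full $\spaceI$ norm, which forces you to estimate the difference forcing $2k\,\overline{w}_t\,\psi^{(2)}_{tt}$ in $H^{\alpha-1/2}(0,T;L^2(\Om))$, the source space required by \eqref{energy_est_I}. Your justification is to reuse \eqref{estsigmapsitt_I} with $\overline{w}_t$ in the role of $\sigma$ and to claim that ``the same chain of embeddings exploited in \eqref{Xfmgti}'' bounds the resulting norms of $\overline{w}_t$ by $\|\overline{w}\|_{\spaceI}$. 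That chain only yields $\overline{w}_t\in L^2(0,T;(W^{1,3}\cap L^\infty)(\Om))$, i.e.\ the $X^\sigma_\fmgti$ norm, with no fractional-in-time regularity of $\overline{w}_t$ with $L^\infty(\Om)$ (or even $L^4(\Om)$) values; but the first Kato--Ponce term requires $\|\overline{w}_t\|_{W_t^{\alpha-1/2,\frac{2}{2\alpha-1}}(L^\infty)}$, which on the Sobolev scale amounts to roughly $\overline{w}_t\in H^{1/2}(0,T;L^\infty(\Om))$. Interpolating the information encoded in $\spaceI$ ($\Dal\psi\in L^\infty(H^2)$, $\Dt^{1+\alpha}\psi\in L^\infty(H^1)$, $\psi\in H^{2+\alpha}(L^2)$) gives at best $\overline{w}_t\in H^{\alpha-1/2-\epsilon}(0,T;L^\infty(\Om))$ (values in $L^\infty$ cost more than half a spatial derivative above $H^{3/2}$, which caps the admissible time regularity), so the product estimate $\|\overline{w}_t\psi^{(2)}_{tt}\|_{H^{\alpha-1/2}(L^2)}\lesssim\|\overline{w}\|_{\spaceI}\|\psi^{(2)}\|_{\spaceI}$ is not covered by what you cite, and it is precisely the step that would need a new argument.

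The paper sidesteps this by \emph{not} contracting in the strong norm: it tests the difference equation with $\Dt^{1+\alpha}\opsi$ (as in the uniqueness part of Proposition~\ref{Prop:fMGT_I}), so the forcing $2k\psi^{(2)}_{tt}\overline{w}_t$ only has to be controlled in $L^1_t(L^2)$ and $L^2_t(H^{-1})$, while the term $2kw^{(1)}_t\opsi_{tt}$ is handled in $H_t^{\alpha-1/2}(H^{-1})$ via \eqref{ab_H-1} and the fractional ODE representation. The input difference then enters only through $\|\overline{w}_t\|_{L^2_t(L^4)}$, which is bounded by $\|\Dt^{3/2}\nabla\overline{w}\|_{L^2(L^2)}$, and one obtains strict contraction merely in the weaker norm $|||\cdot|||$; completeness of the ball $B_R$ (bounded in $\spaceI$) with respect to that weaker metric, as in Theorem~\ref{Thm:fJMGT_W_III}, then closes the Banach fixed-point argument. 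So either adopt this weak-norm contraction, or supply a genuine proof of the strong-norm product estimate; as written, that estimate is the missing piece. Your self-map step and the final passage to \eqref{energy_est2_fJMGT_W_I} are fine.
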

\begin{proof}
	The proof follows by setting up a fixed-point mapping $\mathcal{T}:w \mapsto \psi$, which associates
	\[
	w\in B_R:=\{w \in X_{\textup{fMGT I}}\, : \|w\|_{ X_{\textup{fMGT I}}}\leq R, \ w(0)=\psi_0, \, w_t(0)=\psi_1, \, w_{tt}(0)=\psi_2  \} 
	\]
	with the solution $\psi$ of the linearized problem \eqref{linearized_fMGTI} with $\sigma=2k w_t$. We recall that
	\[
	X^\sigma_\fmgti = L^2(0,T;(W^{1,3}\cap L^\infty)(\Om)),
	\]
	and so
	\[
	\|\sigma\|_{X^\sigma_{\textup{fMGT I}}}= 2|k|\|w_t\|_{X^\sigma_{\textup{fMGT I}}} \leq C \|w\|_{ X_{\textup{fMGT I}}} \lesssim R.
	\]
	Thus, $\sigma$ can be made small enough by decreasing $R>0$. The self-mapping is thus an immediate consequence of the energy estimate \eqref{energy_est_I}, provided we choose $\varrho$ small enough, so that
	\[
	C(R, T) (\|f\|^2_{H^{\alpha-1/2}(L^2)}+\|\psi_0\|_{H^2}^2+\|\psi_1\|_{H^2}^2+\|\psi_2\|_{H^1}^2) \leq C(R, T) \varrho^2 \leq R^2.
	\]
	\indent We prove strict contractivity of this mapping next. Let $w^{(1)}$, $w^{(2)} \in B_R$. Denote $\psi^{(1)}=\mathcal{T} w^{(1)}$ and $\psi^{(2)}=\mathcal{T} w^{(2)}$. Contractivity of $\mathcal{T}$ follows by considering the difference equation for $\opsi= \psi^{(1)}-\psi^{(2)}$:
	\begin{equation} \label{fJMGT_W_I_diff_contract}
	\begin{aligned}
	&\tau^\alpha \Dt^{2+\alpha}\opsi + (1+2kw_t^{(1)})\opsi_{tt} - c^2\Delta\opsi -\tau^\alpha c^2\Delta \Dt^\alpha\opsi - \delta \Dt^{2-\alpha} \Delta \opsi_t+2k\psi^{(2)}_{tt}\overline{w}_t=0,
	\end{aligned}
	\end{equation}
	which is supplemented by zero initial conditions. Similarly to the proof of uniqueness in Proposition~\ref{Prop:fMGT_I}, testing with $\opsi_{tt}$ yields
	\[
	\begin{aligned}
	&\,\begin{multlined}[t] \frac{\tau^\alpha}{2}\nLtwo{\Dt^{1+\alpha}\opsi(t)}^2
	+ \frac{\tau^\alpha c^2}{2}\nLtwo{\nabla\Dt^{\alpha}\opsi(t)}^2\\\hspace*{2cm}+C(\alpha)(\|\psi_{tt}\|_{H_t^{-(1-\alpha)}(L^2)}^2
	+  \|\Dt^{3/2} \nabla \opsi\|_{L^2_t(L^2)}^2) \end{multlined}
	\\
	\leq&\, \left|\int_0^t\prodLtwo{-2k\psi^{(2)}_{tt}\overline{w}_t+c^2\D\opsi-2k w_t^{(1)}\opsi_{tt}}{\Dt^{1+\alpha}\opsi}\ds \right| \\
	\leq&\, \begin{multlined}[t]\frac{\epsilon}{2} \|\Dt^{1+\alpha}\opsi\|_{L_t^\infty(L^2)}^2 
	+\frac{1}{2\epsilon} \|-2k\psi^{(2)}_{tt}\overline{w}_{t}\|_{L_t^1(L^2)}^2
	+\frac{\epsilon}{2} \|\Dt^{3/2}\nabla\opsi\|_{L^2_t(L^2)}^2 \\
	+\frac{1}{2\epsilon}\Bigl(\|c^2\nabla\opsi\|_{H_t^{\alpha-1/2}(L^2)}+\|2k w_t^{(1)}\opsi_{tt}\|_{H_t^{\alpha-1/2}(H^{-1})}\Bigr)^2.\end{multlined}
	\end{aligned}
	\]
	We can then rely on the following bound: 
	\[
	\begin{aligned}
	\|c^2\nabla \opsi\|_{H^{\alpha-1/2}(L^2)}&\lesssim \|\nabla\Dt^{\alpha}\opsi\|_{L^2(L^2)}
	\end{aligned}
	\]
	and, by \eqref{estsigmapsitt_I}--\eqref{Dt2plusalphapn}  with $2k w_t^{(1)}$ in place of $\sigma$, we have
	\[
	\begin{aligned}
	&\|2k w_t^{(1)}\psi_{tt}\|_{H^{\alpha-1/2}(H^{-1})}\lesssim \| w_t^{(1)}\|_{X^\sigma_{\textup{fMGT I}}} \|\Dt^{2+\alpha} \psi\|_{L^2(H^{-1})}\\
	\lesssim&\, \| w_t^{(1)}\|_{X^\sigma_{\textup{fMGT I}}}\|2k w_t^{(1)} \psi_{tt}- c^2\Delta\opsi -\tau^\alpha c^2\Delta \Dt^\alpha\opsi - \delta \Dt^{2-\alpha} \Delta \opsi_t\|_{L^2(H^{-1})}.
	\end{aligned}
	\]
	Thus for $\| w_t^{(1)}\|_{X_\sigma} \lesssim R$ small enough, similarly to the proof of Proposition~\ref{Prop:fMGT_I}, we obtain
	\begin{equation}
	\begin{aligned}
	& \frac{\tau^\alpha}{2}\nLtwo{\Dt^{1+\alpha}\opsi(t)}^2
	+ \frac{\tau^\alpha c^2}{2}\nLtwo{\nabla\Dt^{\alpha}\opsi(t)}^2+\|\opsi_{tt}\|_{H_t^{-(1-\alpha)}(L^2)}^2
	+  \|\Dt^{3/2} \nabla \opsi\|_{L^2_t(L^2)}^2
	\\
	\lesssim& \, R^2\|2k \psi_{tt}^{(2)}\overline{w}_t\|_{L^2(H^{-1})}^2+\|2k \psi_{tt}^{(2)}\overline{w}_{t}\|_{L^1(L^2)}.
	\end{aligned}
	\end{equation}
	Furthermore,
	\begin{equation}
	\begin{aligned}
	R^2\|2k \psi_{tt}^{(2)}\overline{w}_t\|_{L^2(H^{-1})}^2+\|2k \psi_{tt}^{(2)}\overline{w}_{t}\|_{L^1(L^2)} 
	\lesssim&\, R^2 \|\psi_{tt}^{(2)}\|^2_{L^\infty L^4}\|\overline{w}_{t}\|^2_{L^2(L^4)} \\
	\lesssim&\, R^2 \varrho^2 \|\overline{w}_{t}\|^2_{L^2(L^4)}.
	\end{aligned}
	\end{equation}
	We can further bound the last term as follows:
	\begin{equation}
	\begin{aligned}
	\|\nabla \overline{w}_t\|^2_{L^2(L^2)}=\|\textup{I}^{\gamma} \Dt^{1+\gamma}\nabla \overline{w}\|^2_{L^2(L^2)} \leq \|\kersing_{1-\gamma}\|_{L^1(0,T)} \|\Dt^{1+\gamma}\nabla \overline{w}\|^2_{L^2(L^2)},
	\end{aligned}
	\end{equation}
	choosing $\gamma=1/2$. Thus, by decreasing $\varrho>0$, we can guarantee that $\mathcal{T}$ is strictly contractive in the following norm:
	\[
	|||\psi|||= \nLinfLtwo{\Dt^{1+\alpha}\opsi}^2+ \nLinfLtwo{\nabla\Dt^{\alpha}\opsi}^2+\|\opsi_{tt}\|_{H^{-(1-\alpha)}(L^2)}^2+ \|\nabla \opsi\|_{L^2(L^2)}^2.
	\]
	The rest of the arguments follow as in Theorem~\ref{Thm:fJMGT_W_III} and complete the proof.	
\end{proof}
\subsection{Analysis of the fJMGT--W equation} To formulate the corresponding result for the fMGT-W equation
\[	
\tau^\alpha \Dal \psi_{tt}+(1+2k\psi_t)\psi_{tt}-c^2 \Delta \psi -\tau^\alpha c^2 \Dal \Delta \psi- \delta \D\psi_{t}=f        
\]
we again need smallness of the coefficient $\sigma$ in a suitable norm. To this end, let
\[
X^\sigma_\fmgt = H^{\alpha/2}(0,T;(W^{1,3}\cap L^\infty)(\Om))
\]
for $\alpha \in (1/2, 1)$, {and denote the corresponding norm by $\|\cdot\|_{X^\sigma_\fmgt}$}. We also introduce the solution space by
\begin{equation}
\begin{aligned}
\quad X_{\textup{fMGT}}=\,  \left\{ \psi\in H^{2+\alpha}(0,T; L^2(\Om)):\right.&\, \Dt^{1+\alpha/2} \psi \in L^\infty(0,T; \Honetwo), \\& \left. \Dt^{1+\alpha} \in L^\infty(0,T; H_0^1(\Om)) \right\},
\end{aligned}
\end{equation}
{equipped with the norm $\|\cdot\|_{X_{\textup{fMGT}}}$}. Note that with this choice, again  
\[
\|2k \psi_t\|_{X^\sigma_\fmgt} \lesssim \|\psi\|_{X_{\textup{fMGT}}}
\] 
holds but the energy term $\|\Dt^{1+\alpha/2}\D \psi\|_{L^2(L^2)}$ needed for this purpose comes with an $\alpha$-dependent coefficient in \eqref{est_fMGT}.
For this reason, while still being able to show well-posedness also of the nonlinear \textup{fJMGT--W} equation for each $\alpha\in(0,1)$, we will not obtain a uniform bound quantifying smallness of the initial data. That is, we will not be able to show that for fixed small enough $(\psi_0, \psi_1, \psi_2)$, there exists a family of solutions to the nonlinear problem. Hence, concerning limits as $\alpha\to1-$, we will restrict ourselves to the linear \textup{fMGT} equation:
\[
\tau^\alpha \Dal \psi_{tt}+(1+\sigma(x,t))\psi_{tt}-c^2 \Delta \psi -\tau^\alpha c^2 \Dal \Delta \psi- \delta \D\psi_{t}=f   .
\]
We next prove the well-posedness of the linear time-fractional problem. {Note that under the same regularity conditions on the initial and boundary data, the fMGT equation allows us to prove slightly better regularity of the solution as compared to fMGT I; cf. Proposition~\ref{Prop:fMGT_I}.}
\begin{proposition}[Well-posedness of the fMGT equation] \label{Prop:fMGT}
	Let  $\alpha \in [\alpha_0,1)$ for some $\alpha_0 > 1/2$. Assume that $f \in H^{\alpha-1/2}(0,T;L^2(\Om))$, $\sigma \in X^\sigma_\fmgti$, and \[(\psi_0, \psi_1, \psi_2) \in (\Honetwo, \Honetwo, H_0^1(\Om)).\] Then there exists $\varrho=\varrho(\alpha)>0$, such that if $\|\sigma\|_{X^\sigma_\fmgt} \leq \varrho$, there is a unique $\psi \in X_\fmgt$, which satisfies the problem in the $L^2(0,T; L^2(\Om))$ sense with $(\psi, \psi_t, \psi_{tt})\vert_{t=0}=(\psi_0, \psi_1, \psi_2)$.  Furthermore, this solution fulfills the following estimate:
	\begin{equation} \label{est_fMGT}
	\begin{aligned}
	& \begin{multlined}[t]\|\Dt^{2+\alpha}\psi\|_{L^2(L^2)}^2
	+ \nLtwo{\nabla \Dt^{1+\alpha}\psi(t)}^2
	+\nLtwo{\D \Dt^{\alpha}\psi(t)}^2\\
	+ C(\alpha)\Bigl(\|\nabla\psi_{tt}\|_{{H^{-(1-\alpha)/2}}(L^2)}^2
	+ \|\Dt^{1+\alpha/2}\D \psi\|_{L^2(L^2)}^2\Bigr) \end{multlined}\\
	\lesssim&\, \|f\|_{H^{\alpha-1/2}(L^2)}^2+\nLtwo{\D \psi_0}^2+\nLtwo{\D \psi_1}^2+\nLtwo{\nabla \psi_2}^2.
	\end{aligned}
	\end{equation}
	where $C(\alpha)\to0$ as $\alpha\to1^-$. 
\end{proposition}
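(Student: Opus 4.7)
The plan is to follow the blueprint of Proposition~\ref{Prop:fMGT_I}, since the present equation differs from the fMGT~I model only in the dissipative term, where $\delta\Delta\psi_t$ replaces $\delta\Dtal\Delta\psi$. I would again employ a spatial Galerkin discretization using the eigenbasis $\{\phi_i\}$ of the Dirichlet--Laplacian, recast the semi-discrete problem as a system of Volterra integral equations analogous to \eqref{Volterra_system_I}, and deduce unique solvability in $L^2(0,T)$, yielding approximants $\psi^n\in H^{2+\alpha}(0,T;V_n)$.

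The principal step is the uniform energy estimate, for which I would test the semi-discrete equation with the same multiplier $-\Delta\Dt^{1+\alpha}\pn$ used in step~(II) of Proposition~\ref{Prop:fMGT_I}. The leading term, the $(1+\sigma)\pttn$ contribution, and the $-\tau^\alpha c^2\Dal\Delta\pn$ term generate --- modulo the initial-data corrections coming from the identity $\Dt^{1+\alpha}\pn=\Dt\Dt^\alpha\pn-p^{-\alpha}\psi^n_t(0)$ --- the same left-hand side quantities
\[
\tfrac{\tau^\alpha}{2}\|\nabla\Dt^{1+\alpha}\pn(t)\|_{L^2}^2,\quad \cos(\tfrac{\pi(1-\alpha)}{2})\|\nabla\pttn\|^2_{H^{-(1-\alpha)/2}(L^2)},\quad \tfrac{\tau^\alpha c^2}{2}\|\Delta\Dt^\alpha\pn(t)\|_{L^2}^2.
\]
The new ingredient is the damping term. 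Rewriting $\Dt^{1+\alpha}\pn=\Dt^\alpha\psi^n_t$, its contribution reads
\[
\delta\int_0^t (\Delta\psi^n_t,\Dt^\alpha\Delta\psi^n_t)_{L^2}\,\textup{d}s,
\]
and a Plancherel-type coercivity estimate, in the spirit of \eqref{coercivityI} but applied to $(w,\Dt^\alpha w)$ with $w=\Delta\ptn$, furnishes the lower bound $\cos(\pi\alpha/2)\|\Dt^{\alpha/2}\Delta\ptn\|^2_{L^2(L^2)}=\cos(\pi\alpha/2)\|\Dt^{1+\alpha/2}\Delta\pn\|^2_{L^2(L^2)}$, up to absorbed initial-data corrections. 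This yields the desired term in \eqref{est_fMGT} with $C(\alpha)\sim\cos(\pi\alpha/2)\to0$ as $\alpha\to1^-$.

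The right-hand side handling of $\sigma\pttn$ mirrors the fMGT~I argument but exploits the $H^{\alpha/2}$-in-time regularity encoded in $X^\sigma_\fmgt$: the Kato--Ponce inequality \eqref{prodruleest} with $\rho=\alpha-1/2$ controls $\|\sigma\pttn\|_{H^{\alpha-1/2}_t(L^2)}$, and representing $\pttn$ through the fractional ODE \eqref{fracODE} and the PDE itself bounds this by $\|\sigma\|_{X^\sigma_\fmgt}$ times a controllable right-hand side. For $\|\sigma\|_{X^\sigma_\fmgt}$ smaller than an $\alpha$-dependent threshold $\varrho$, absorption combined with a generalized Gronwall inequality (see, e.g., \cite[Lemma 7.2]{kubica2020time}) yields \eqref{est_fMGT} at the discrete level. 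Weak/weak-$\star$ compactness, attainment of the initial conditions in $\Honetwo\times\Honetwo\times H_0^1(\Om)$, and uniqueness via testing the homogeneous problem with $\Dt^{1+\alpha}\pt$ are then established in direct analogy with steps~(III)--(V) of Proposition~\ref{Prop:fMGT_I}.

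The principal obstacle, and the reason why $\varrho$ must be allowed to depend on $\alpha$, is precisely the vanishing factor $C(\alpha)\sim\cos(\pi\alpha/2)$ accompanying $\|\Dt^{1+\alpha/2}\Delta\psi\|_{L^2(L^2)}$ in the energy. Since this norm is the one used to bound $\sigma=2k\pt$ in $X^\sigma_\fmgt$ during any fixed-point argument for the nonlinear fJMGT--W equation, its degeneracy at $\alpha=1^-$ obstructs a uniform-in-$\alpha$ smallness threshold on the data --- in contrast with fMGT~I, where the assumption $\alpha>1/2$ guaranteed that the fractional integration operator $\textup{I}^{2\alpha-1}$ was $L^2$-bounded and produced an $\alpha$-independent coercivity constant for the damping.
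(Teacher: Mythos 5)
Your skeleton matches the paper's proof: Galerkin discretization with the Dirichlet--Laplacian eigenbasis, a Volterra reformulation for the approximants, the multiplier $-\Delta\Dt^{1+\alpha}\pn$, and the treatment of the damping term $\delta\int_0^t\prodLtwo{\Delta\ptn}{\Dt^{1+\alpha}\Delta\pn}\ds$ via the decomposition $\Delta\ptn=\textup{I}^{\alpha}\Dt^{1+\alpha}\Delta\pn+\Delta\ptn(0)$ and \eqref{coercivityI}, which is exactly how the paper obtains the lower bound $\cos(\pi\alpha/2)\|\Dt^{1+\alpha/2}\Delta\pn\|^2_{L^2_t(L^2)}$ up to the initial-data correction $d_0$ (cf.\ \eqref{damping_fMGT}); the concluding steps (fractional ODE representation of $\pttn$, smallness of $\sigma$, generalized Gronwall, limit passage, uniqueness) are also as in the paper.

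There is, however, a genuine gap in your handling of the right-hand side. You propose Kato--Ponce with $\rho=\alpha-1/2$, i.e.\ to control $\|\sigma\pttn\|_{H^{\alpha-1/2}_t(L^2)}$ exactly as in the fMGT~I proof. But then the duality split of $\int_0^t\prodLtwo{\tilde f}{\Dt^{1+\alpha}\Delta\pn}\ds$ produces the companion factor $\|\Dt^{1+\alpha}\Delta\pn\|_{H^{1/2-\alpha}_t(L^2)}$, and since $1/2-\alpha>-\alpha/2$ for $\alpha<1$, this norm is strictly stronger than the $H^{-\alpha/2}_t(L^2)$ norm (equivalently $\|\Dt^{1+\alpha/2}\Delta\pn\|_{L^2_t(L^2)}$) that the fMGT damping actually furnishes; in fMGT~I the damping came from $\textup{I}^{2\alpha-1}$ and did control $\|\Dt^{1+\alpha}\Delta\pn\|_{H^{1/2-\alpha}_t(L^2)}$, which is precisely why $\rho=\alpha-1/2$ worked there. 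With your choice the $\epsilon$-term cannot be absorbed and the estimate does not close. The paper instead bounds the pairing by $\frac{1}{2\epsilon}\|\tilde f\|^2_{H^{\alpha/2}_t(L^2)}+\frac{\epsilon}{2}\|\Dt^{1+\alpha/2}\Delta\pn\|^2_{L^2_t(L^2)}$ and applies Kato--Ponce with $\rho=\alpha/2$ and the exponents $(p_1,q_1)=(2,\infty)$, $(p_2,q_2)=\bigl(\tfrac{2}{1-\alpha},\tfrac{2}{\alpha}\bigr)$, cf.\ \eqref{estsigmapsitt} and \eqref{p12omega12}. This is exactly where the $H^{\alpha/2}$-in-time regularity of $X^\sigma_\fmgt$ enters -- a regularity you invoke but never actually use, since for $\rho=\alpha-1/2$ the Sobolev embedding would let $L^2$-in-time of $\sigma$ suffice, as in fMGT~I -- both through $\|\sigma\|_{H^{\alpha/2}_t(L^\infty)}$ and, with $\sigma=2k\psi_t$ in view, through the requirement that $\|\Delta\psi_t\|_{W^{\alpha/2,p_1}_t(L^2)}$ be dominated by the damping. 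Once the index is corrected to $\rho=\alpha/2$, your argument coincides with the paper's, including your correct explanation of why the smallness threshold $\varrho$ must depend on $\alpha$ through the degenerating factor $\cos(\pi\alpha/2)$.
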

\begin{proof}
{The proof follows similarly to the proof of Proposition~\ref{Prop:fMGT_I} with the main changes contained in the energy analysis, on which we focus here.} Note that now the semi-discrete problem can be equivalently rewritten as a system of Volterra integral equations:
	\[
	\begin{aligned}
	&\begin{multlined}[t]\tau^\alpha M \bmu 
	+ M_\sigma(t)\Bigl(p^{\alpha-1}*\bmu + p^0 \bxi_2\Bigr)
	+c^2 K \Bigl(p^{\alpha+1}*\bmu + p^2 \boldsymbol{\xi}_2+ p^1 \boldsymbol{\xi}_1+ p^0 \boldsymbol{\xi}_0\Bigr)\\
	+\tau^\alpha c^2 \Bigl(p^{1}*\bmu +p^{2-\alpha} \boldsymbol{\xi}_2+  p^{1-\alpha} \boldsymbol{\xi}_1\Bigr)
	+\delta K (	p^{\alpha}*\bmu + p^1 \boldsymbol{\xi}_2+ p^0 \boldsymbol{\xi}_1) =f\end{multlined}
	\end{aligned}
	\]
	in place of \eqref{Volterra_system_I}; the existence of an approximate solutions follows by the same arguments. We present the energy analysis of the semi-discrete problem here, but omit the superscript $n$ below for simplicity.  Multiplying the semi-discrete equation with $-\Delta \Dt^{1+\alpha} \psi$, yields the energy identity \eqref{enid_rem}, where now
	\begin{equation} \label{damping_fMGT}
	\begin{aligned} 
	\dmp =&\, \intt \prodLtwo{\Dt \D \psi}{\D \Dt^{1+\alpha} \psi}\ds\\
	=&\, d_0+	\int_0^t\prodLtwo{\Ialpha \Dt^{1+\alpha}\Delta\psi(s)}{\Dt^{1+\alpha}\Delta\psi(s)}\ds,
	\end{aligned}
	\end{equation}
	with $d_0=\prodLtwo{\D\psi_t(0)}{\Dt^{\alpha} \D \psi(t)}-p^{1-\alpha}(t)\|\D\psi_t(0)\|_{L^2}^2$ instead of \eqref{damping_fMGT_I}. Here, we have used the identities
	\[\Dt\psi={\textup{I}}^1\psi_{tt}+\psi_t(0)=\Ialpha\Dt^{1+\alpha}\psi+\psi_t(0)\]
	and \[
	\begin{aligned}
	\intt\prodLtwo{\D\psi_t(0)}{\Dt^{1+\alpha} \D \psi(s)}\ds
	=&\,\prodLtwo{\D\psi_t(0)}{\intt ((\Dt \Dt^{\alpha} \D \psi)(s)-p^{-\alpha}(s)\D\psi_t(0))\ds}\\
	=&\, \prodLtwo{\D\psi_t(0)}{\Dt^{\alpha} \D \psi(t)}-p^{1-\alpha}(t)\|\D\psi_t(0)\|_{L^2}^2.
	\end{aligned}\]
	The damping term can now be estimated from below as follows
	\[
	\frac{\dmp+d_0}{\cos(\pi\gamma/2)} \geq 
	\|\Dt^{1+\alpha}\D \psi\|_{H_t^{-\alpha/2}(L^2)}^2\sim \|\Dt^{1+\alpha/2}\D \psi\|_{L_t^2(L^2)}^2. 
	\]
	Furthermore, we have
	\[
	\begin{aligned}
	\left|\int_0^t\prodLtwo{\tilde{f}(s)}{\Dt^{1+\alpha}\D \psi(s)}\ds\right|
	\leq 
	\frac{1}{2\epsilon}\|\tilde{f}\|_{H^{\alpha/2}(L^2)}^2+\frac{\epsilon}{2}\|\Dt^{1+\alpha/2}\D \psi\|_{L^2(L^2)}^2
	\end{aligned}
	\]
	with
	\begin{equation}
	\tilde{f}=f+c^2\D\pn-\sigma\pttn +\tau^{\alpha}\left(p^{-\alpha}\pttn(0)-c^2p^{1-\alpha}\D\ptn(0)\right)
	\end{equation}
	as before. 	By the Kato--Ponce inequality \eqref{prodruleest} with $\rho=\alpha/2$, we then have
	\begin{equation}\label{estsigmapsitt}
	\begin{aligned}
	&\|\sigma\psi_{tt}\|_{H_t^{\alpha/2}(L^2)}
	\\
	\lesssim&\,
	\|\sigma\|_{W_t^{\alpha/2,p_1}(L^\infty)}\|\psi_{tt}\|_{L_t^{q_1}(L^2)}
	+\|\sigma\|_{L_t^{p_2}(L^\infty)}\|\psi_{tt}\|_{W_t^{\alpha/2,q_2}(L^2)}
	\end{aligned}
	\end{equation}
	for $\frac{1}{p_1}+\frac{1}{q_1}=\frac{1}{p_2}+\frac{1}{q_2}=\frac12$. 	Similarly to before, to further estimate the norms of $\psi_{tt}$ in \eqref{estsigmapsitt} we will use the leading time derivative term $\Dt^{2+\alpha}\psi$ and its representation via the fractional ODE
	\[
	\tau^\alpha\Dt^\alpha \ptt + \ptt = -r, \qquad r=\sigma \ptt - c^2\Delta\psi -\tau^\alpha c^2\Dt^\alpha\Delta \psi - \delta \Dt \Delta \psi -f.
	\]
	Therefore,
	\[
	\begin{aligned}
	&\|\psi_{tt}\|_{L^{q_1}(0,t;L^2)}+\|\psi_{tt}\|_{W^{\alpha/2, q_2}(0,t;L^2)}\lesssim \|\Dt^{2+\alpha}\psi\|_{L^2(L^2)}\\
	\lesssim&\, \|\sigma \ptt - c^2\Delta\psi -\tau^\alpha c^2\Dt^\alpha\Delta \psi - \delta \Dt \Delta \psi -f
	\|_{L^2(L^2)}+\|\psi_{tt}(0)\|_{L^2},
	\end{aligned}
	\]
	where again we take care of the highest order term by using the damping term $\dmp$. Thus, we require
	\begin{equation}\label{cond_qomega}
	2-\frac{1}{q_1}\leq 2+\alpha-\frac12, \quad	2+\alpha/2-\frac{1}{q_2}\leq 2+\alpha-\frac12. 
	\end{equation}
	Additionally, we aim at choosing the available parameters $p_i$ (yielding $q_i=2p_i/(p_i-2)$), such that (having in mind that $\sigma = 2k \psi_t$ in the fixed point argument later on)
	\[
	\|\D\psi_t(s)\|_{W^{\alpha/2, p_1}_t(L^2)}\lesssim\dmp, \quad 
	\|\D\psi_t(s)\|_{L^{p_2}_t(L^2)}\lesssim\dmp\,,
	\]
	which leads to 
	\begin{equation}\label{cond_pomega}
	1+\frac{\alpha}{2}-\frac{1}{p_1}\leq 1+\frac{\alpha}{2}-\frac12 \mbox{ and }
	1-\frac{1}{p_2}\leq 1+\frac{\alpha}{2}-\frac12.
	\end{equation}
	It is readily checked that all conditions in \eqref{cond_qomega}, \eqref{cond_pomega} can be satisfied with the choice 
	\begin{equation}\label{p12omega12}
	(p_1, q_1)=(2, \infty)\,, \qquad (p_2, q_2)=\left(\frac{2}{1-\alpha}, \frac{2}{\alpha}\right).
	\end{equation}
	Thus analogously to the proof of Proposition~\ref{Prop:fMGT_I}, we arrive at \eqref{est_fMGT}.
\end{proof}
\begin{theorem}[Local well-posedness of the fJMGT--W equation] \label{Thm:fJMGT_W}
	Let $\alpha \in [\alpha_0,1)$ for some $\alpha_0 > 1/2$ and $T>0$. Further, assume that $f \in H^{\alpha-1/2}(0,{T}; L^2(\Om))$. Then there exists  
	$\varrho=\varrho(\alpha,T)>0$
	such that if
	\begin{equation}
	\|f\|^2_{H^{\alpha-1/2}(L^2)}+\|\psi_0\|_{H^2}^2+\|\psi_1\|_{H^2}^2+\|\psi_2\|_{H^1}^2 \leq \varrho ^2,
	\end{equation}
	then the initial boundary-value problem
	\begin{equation}
	\left \{
	\begin{aligned}
	\tau^\alpha \Dt^{2+\alpha}\psi + &(1+2k\pt) \psi_{tt} - c^2\Delta\psi -\tau^\alpha c^2\Dt^\alpha\Delta \psi - \delta \Delta \psi_t = f\hspace*{-2mm}&&\text{in }\Omega\times(0,T), \\[1mm]
	&\psi=\,0&&\text{on }\partial\Omega\times(0,T),\\[1mm]
	&(\psi, \psi_t, \psi_{tt})=\,(\psi_0, \psi_1, \psi_2)&&\mbox{in }\Omega\times \{0\},
	\end{aligned} \right.
	\end{equation}
	has a unique solution $\psi \in X_{\textup{fMGT}}$, which satisfies	
	\begin{equation} \label{energy_est2_fJMGT_W}
	\begin{aligned}
	\begin{multlined}[t] \|\psi\|^2_{X_{\textup{fMGT}}}
	\end{multlined}
	\lesssim\,\begin{multlined}[t] 	\|f\|^2_{H^{\alpha-1/2}(L^2)}+\|\psi_0\|_{H^2}^2+\|\psi_1\|_{H^2}^2+\|\psi_2\|_{H^1}^2. \end{multlined}
	\end{aligned}
	\end{equation}
\end{theorem}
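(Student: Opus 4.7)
The plan is to apply the Banach Fixed-point theorem to the mapping $\mathcal{T}\colon w\mapsto\psi$, where $\psi$ is the unique solution obtained from Proposition~\ref{Prop:fMGT} for the linear fMGT equation with coefficient $\sigma = 2k w_t$ and source $f$. We take $w$ from the closed ball
\[
B_R = \{w\in X_\fmgt \colon \|w\|_{X_\fmgt}\leq R, \ (w,w_t,w_{tt})\vert_{t=0}=(\psi_0,\psi_1,\psi_2)\}.
\]
Using the embedding $\|2kw_t\|_{X^\sigma_\fmgt}\lesssim\|w\|_{X_\fmgt}$ observed just before Proposition~\ref{Prop:fMGT}, we can pick $R=R(\alpha)$ small enough that $\|\sigma\|_{X^\sigma_\fmgt}\leq\varrho(\alpha)$, so that the linear well-posedness applies. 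Estimate \eqref{est_fMGT} then gives $\|\psi\|_{X_\fmgt}^2 \leq C_1(\alpha,R,T)\varrho^2$, and choosing $\varrho=\varrho(\alpha,T)$ small enough (with the $\alpha$-dependence inherited from $C(\alpha)$ in \eqref{est_fMGT}) ensures $\mathcal{T}(B_R)\subseteq B_R$.

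For strict contractivity, I would follow the strategy used in the proofs of uniqueness for Proposition~\ref{Prop:fMGT_I} and of contractivity in Theorem~\ref{Thm:fJMGT_W_I}. For $w^{(1)}, w^{(2)}\in B_R$ with $\psi^{(i)}=\mathcal{T}w^{(i)}$, the difference $\overline{\psi}=\psi^{(1)}-\psi^{(2)}$ satisfies
\[
\tau^\alpha \Dt^{2+\alpha}\overline{\psi} + (1+2kw_t^{(1)})\overline{\psi}_{tt} - c^2\Delta\overline{\psi} -\tau^\alpha c^2\Dal\Delta\overline{\psi} - \delta\Delta\overline{\psi}_t = -2k\,\overline{w}_t\,\psi^{(2)}_{tt}
\]
with zero initial data. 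Instead of testing with $-\Delta \Dt^{1+\alpha}\overline{\psi}$, I would test with $\Dt^{1+\alpha}\overline{\psi}$ to derive an estimate in the weaker norm
\[
|||\overline{\psi}|||^2 = \nLinfLtwo{\Dt^{1+\alpha}\overline{\psi}}^2 + \nLinfLtwo{\nabla\Dal\overline{\psi}}^2 + C(\alpha)\Bigl(\|\overline{\psi}_{tt}\|_{H^{-(1-\alpha)/2}(L^2)}^2 + \|\Dt^{1+\alpha/2}\overline{\psi}\|_{L^2(L^2)}^2\Bigr),
\]
exploiting the coercivity inequalities \eqref{eqn:Alikhanov_1} and \eqref{coercivityI} in the gradient-free analogues of the left-hand side terms of \eqref{enid_rem}. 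The forcing contribution is controlled through
\[
\|2k\,\overline{w}_t\,\psi^{(2)}_{tt}\|_{L^1(L^2)} \lesssim \|\psi^{(2)}_{tt}\|_{L^\infty(L^4)}\|\overline{w}_t\|_{L^2(L^4)}\lesssim \varrho\,\|\overline{w}\|_{X_\fmgt},
\]
where $\|\overline{w}_t\|_{L^2(L^4)}$ is bounded by $\|\overline{w}\|_{X_\fmgt}$ via $H^1\hookrightarrow L^4$ and the identity $\nabla\overline{w}_t = \textup{I}^{\alpha/2}\Dt^{1+\alpha/2}\nabla\overline{w}$ combined with Young's convolution inequality, analogously to the final step in the proof of Theorem~\ref{Thm:fJMGT_W_I}.

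The main obstacle lies in handling the variable coefficient term $2k w_t^{(1)}\overline{\psi}_{tt}$ appearing on the left-hand side of the difference equation: it must be estimated in the fractional Sobolev norm $H^{\alpha/2}(H^{-1})$ via the Kato--Ponce inequality \eqref{prodruleest} with the parameter choice \eqref{p12omega12}, combined with the fractional ODE representation \eqref{fracODE} of $\Dt^{2+\alpha}\overline{\psi}$ in terms of $r = 2k w_t^{(1)}\overline{\psi}_{tt} - c^2\Delta\overline{\psi} - \tau^\alpha c^2\Dal\Delta\overline{\psi} - \delta\Delta\overline{\psi}_t + 2k\overline{w}_t\psi^{(2)}_{tt}$. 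The resulting $\|2k w_t^{(1)}\overline{\psi}_{tt}\|_{H^{\alpha/2}(H^{-1})}$ can then be absorbed into the left-hand side thanks to the smallness $\|w^{(1)}_t\|_{X^\sigma_\fmgt}\lesssim R$, leaving a bound by a multiple of $\|\overline{w}\|_{X_\fmgt}$. Reducing $\varrho$ (and thus $R$) further yields strict contractivity with respect to $|||\cdot|||$. Since $B_R$ endowed with the metric induced by $|||\cdot|||$ is a closed subset of a complete normed space, Banach's theorem gives a unique fixed point $\psi\in B_R$ satisfying \eqref{energy_est2_fJMGT_W}. Note that since $C(\alpha)\to 0$ as $\alpha\to1^-$ in \eqref{est_fMGT}, the admissible $\varrho(\alpha,T)$ degenerates in this limit, so no uniform-in-$\alpha$ family of solutions is available for a limiting analysis.
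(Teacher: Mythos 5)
Your proposal is correct and follows essentially the same route as the paper, whose proof of this theorem is simply stated as "analogous to Theorem~\ref{Thm:fJMGT_W_I} combined with Proposition~\ref{Prop:fMGT}": a Banach fixed-point argument with $\sigma=2kw_t$, self-mapping from the $\alpha$-dependent linear estimate \eqref{est_fMGT}, and contractivity in a weaker norm obtained by testing the difference equation with $\Dt^{1+\alpha}\overline{\psi}$, handling $2kw_t^{(1)}\overline{\psi}_{tt}$ via Kato--Ponce with $\rho=\alpha/2$, the fractional ODE representation, and smallness absorption. Your closing remark about the $\alpha$-degenerate $\varrho(\alpha,T)$ also matches the paper's own discussion preceding Proposition~\ref{Prop:fMGT}.
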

\begin{proof}
	The proof follows in an analogous manner to the proof of Theorem~\ref{Thm:fJMGT_W_I}, combined with the results of Proposition~\ref{Prop:fMGT}. We therefore omit the details here.
\end{proof}
\begin{remark}[On the analysis of the fMGT II equation with $\boldsymbol{\sigma \neq 0}$]\label{Remark:fMGT_II}
	We note that the \textup{fMGT II} equation {
	\[
	\tau^\alpha \Dal \psi_{tt}+(1+\sigma(x,t))\psi_{tt}-c^2 \Delta \psi -(\tau^\alpha c^2 +\delta)\Dal \Delta \psi=f
	\]}
	 does not seem to be tractable this way with $\sigma\not=0$. In particular, we would have 
	\[
	\dmp =\nLtwo{\D\Dt^\alpha\psi}^2 \Big \vert_0^t 
	\]
	in place of \eqref{damping_fMGT_I} and \eqref{damping_fMGT}.  Thus, the damping term $\delta\dmp$ is obviously too weak. \\
	\indent Note that the multiplier $-\D\psi_{tt}$ that we have successfully used for the \textup{fMGT III} equation in Section~\ref{Sec:Analysis_fJMGT_W_III} does to work out either since then the $\delta$ term cannot be proven to be nonnegative due to the fact that the difference $2-\alpha$ of the differentiation orders is larger than one. We provide an analysis of the \textup{fMGT II} equation with $\sigma=0$ in Section~\ref{Sec:LinearAnalysis} by rewriting it as a second-order wave equation with memory.
\end{remark}

\subsection{Limiting behavior of the fMGT--W and fJMGT--W I equations}
The difference $\overline{\psi}=\psi^{\alpha}-\psi$ solves 
\begin{equation} \label{fJMGT_W_III_diff_lin}
\begin{aligned}
&\tau^\alpha \Dt^{2+\alpha}\opsi + \sigma\opsi_{tt} - c^2\Delta\opsi -\tau^\alpha c^2\Delta \Dt^\alpha\opsi - \delta \Dt^\beta \Delta \opsi_t \\
&=\,(\tau\psi_{ttt}-\tau^\alpha\Dt^{2+\alpha}\psi_{tt}) - c^2\D(\tau\psi_t-\tau^\alpha\Dt^\alpha\psi) - \delta \D(\psi_t-\Dt^\beta\psi_t)=:\tilde{f}
\end{aligned}
\end{equation}
in the linear case and 
\begin{equation} \label{fJMGT_W_III_diff}
\begin{aligned}
&\tau^\alpha \Dt^{2+\alpha}\opsi + (1+2k\psi_t^{\alpha})\opsi_{tt} - c^2\Delta\opsi -\tau^\alpha c^2\Delta \Dt^\alpha\opsi - \delta \Dt^\beta \Delta \opsi+2k\psi_{tt}\opsi_t =\tilde{f}
\end{aligned}
\end{equation}
in the nonlinear case with vanishing initial data and $\beta=\beta(\alpha)$ as in \eqref{def_beta} (which implies that the $\beta$ difference term on the right-hand side just vanishes in case of the \textup{fJMGT--W} equation).

Multiplication with $\Dt^{1+\alpha}\opsi$ with the abbreviations 
$\sigma^\alpha=\sigma$, $\mu=0$ in the linear case  
and $\sigma^\alpha=2k\psi_t^{\alpha}$, $\mu=2k\psi_{tt}\opsi_t$ in the nonlinear case yields, analogously to the proof of uniqueness in Proposition~\ref{Prop:fMGT_I},
\begin{equation} \label{limit_diff_I}
\begin{aligned}
\textup{lhs}:=& \, \frac{\tau^\alpha}{2}\nLtwo{\Dt^{1+\alpha}\opsi(t)}^2
+ \frac{\tau^\alpha c^2}{2}\nLtwo{\nabla\Dt^{\alpha}\opsi(t)}^2\\
&\quad+ \cos(\pi(1-\alpha)/2)\|\opsi_{tt}\|_{H_t^{-(1-\alpha)}(L^2)}^2
+ \delta  \cos(\pi\gamma/2) \|\Dt^m \nabla \opsi\|_{L_t^2(L^2)}^2
\\
=& \int_0^t\prodLtwo{\tilde{f}+c^2\D\opsi-\sigma^\alpha\opsi_{tt}+\mu}{\Dt^{1+\alpha}\opsi} \ds\\
\leq&\, \begin{multlined}[t]\frac{\epsilon}{2} \|\Dt^{1+\alpha}\opsi\|_{L_t^\infty(L^2)}^2 
+\frac{1}{2\epsilon} \|\tilde{f}+2k\psi_{tt}\opsi_{t}\|_{L_t^1(L^2)}^2
+\frac{\epsilon}{2} \|\Dt^m\nabla\opsi\|_{L_t^2(L^2)}^2 \\
+\frac{1}{2\epsilon}\left(\|c^2\nabla\opsi\|_{H_t^\rho(L^2)}+\|\sigma^\alpha\opsi_{tt}\|_{H_t^\rho(H^{-1})}\right)^2,\end{multlined}
\end{aligned}
\end{equation}
where 
\[
\begin{cases}
\gamma=2\alpha-1\,, \quad m=3/2\,, \quad \rho=\alpha-1/2&\mbox{ for fMGT I,}\\
\gamma=\alpha\,, \quad m=1+\alpha/2\,, \quad \rho=\alpha/2&\mbox{ for fMGT.}
\end{cases}
\]
We know that
\[
\begin{aligned}
\|2k\psi_{tt}\opsi_{t}\|_{L^1(L^2)}
&{\leq 2|k|C_{H^1,L^6}^\Om C_{H^\alpha,L^3}^\Om \|\nabla\psi_{tt}\|_{L_t^2(L^2)} \|\opsi_t\|_{L_t^2(H^\alpha)}}\\
&{\lesssim \|\nabla\psi_{tt}\|_{L^2(L^2)}
	\|\Dt^{1+\alpha}\opsi\|_{L_t^2(L^2)}^{1-\alpha} \|\Dt^\alpha\nabla\opsi\|_{L_t^2(L^2)}^\alpha,}\\
\|c^2\D\opsi\|_{H^\rho(L^2)}&\lesssim \|\nabla\Dt^{\alpha}\opsi\|_{L^2(L^2)}.
\end{aligned}
\]
where we have used interpolation; cf.~\cite[Chapter 7]{Adams}. Let $X^\sigma$ be either $X^\sigma_\fmgti$ or $X^\sigma_\fmgt$, depending on the equation. By proceeding similarly to \eqref{estsigmapsitt_I}--\eqref{Dt2plusalphapn}, we find that
\[
\begin{aligned}
\|\sigma^\alpha\opsi_{tt}\|_{H^\rho(H^{-1})}\lesssim&\, \begin{multlined}[t] \|\sigma^\alpha\|_{X^\sigma} \left(\|\sigma^\alpha\opsi_{tt}\|_{H_t^{\rho}(H^{-1})} 
+ \textup{rhs}\right), \end{multlined}
\end{aligned}
\]
where 
\[
\begin{aligned}
\textup{rhs}:=&
 c^2\|\nabla\opsi\|_{L^2_t(L^2)} +\tau^\alpha c^2\|\Dt^\alpha\nabla \opsi\|_{L^2_t(L^2)}
	+\|\tilde{f}\|_{L^2(H^{-1})} \\
&\qquad+ \delta
\begin{cases}
\kersing_{3/2-\alpha}*\|\Dt^m\nabla\opsi\|_{L^2_t(L^2)}^2&\mbox{ for fMGT I,}\\
\|\Dt^m\nabla\opsi\|_{L^2_t(L^2)}^2&\mbox{ for fMGT.}
\end{cases}
\end{aligned}
\]
We can therefore tackle all terms by generalized Gronwall in the \textup{fMGT--I} case, 
whereas for \textup{fMGT} we need to absorb the $\delta$ term by the lhs $\delta$ term in \eqref{limit_diff_I} and therefore need to impose smallness of $\|\sigma^\alpha\|_{X^\sigma}$ with an $\alpha$ dependent bound. 

It remains to estimate the contribution arising from \[\tilde{f}= \tau\psi_{ttt}-\tau^\alpha\Dt^{2+\alpha}\psi - c^2\D(\tau\psi_t-\tau^\alpha\Dt^\alpha\psi) - \delta \D(\psi_t-\Dt^\beta\psi),\] which we do for each of the difference terms separately, 
\[
\begin{aligned}
\|\tau\psi_{ttt}-\tau^\alpha\Dt^{2+\alpha}\psi_{tt}\|_{L^1(L^2)}
\leq&\,|\tau-\tau^\alpha|\, \|\psi_{ttt}\|_{L^1(L^2)}+\tau^\alpha \|(\Dt-\Dt^\alpha)\psi_{tt}\|_{L^1(L^2)},
\\
\|\D(\tau\psi_t-\tau^\alpha\Dt^\alpha\psi)\|_{L^1(L^2)}
\leq&\,|\tau-\tau^\alpha|\,\|\D\psi_t\|_{L^1(L^2)}+\tau^\alpha \|(\Dt-\Dt^\alpha)\D\psi\|_{L^1(L^2)},
\end{aligned}
\]
and, in case of \textup{fJMGT--W I} with $\beta=2-\alpha$,
\[
\begin{aligned}
\|\D(\psi_t-\Dt^{2-\alpha}\psi)\|_{L^1(L^2)}
=\|(\mbox{id}-\Dt^{1-\alpha})\D\psi_t\|_{L^1(L^2)}.
\end{aligned}
\]
Thus, to be able to apply the limits \eqref{limitalphaL2_1} and \eqref{limitalphaL2}, we need 
\[\psi_{tt},\, \D\psi
{\in W^{2,1}(0,T;L^2(\Om)) \,, \ }
 \D\psi_t \in W^{1,1}(0,T;L^2(\Om)),\] and, in case of \textup{fJMGT--W I} additionally $\psi_t(0)=0$.\\
\indent Note that the required smoothness of $\psi$ follows, e.g., from Theorem~\ref{Thm:fJMGT_K_III} below under restrictive regularity conditions on the initial data. We expect that these assumptions might be relaxed in view of the fact that the $W^{1, \infty}(0,T; \Honetwo) \cap W^{2, \infty}(0,T; H_0^1(\Om))\cap W^{3, \infty}(0,T; L^2(\Om))$ regularity that we obtain from Theorem~\ref{Thm:fJMGT_W_III} is already very close to what is needed here. Altogether, with \[
\begin{aligned}
X^{\textup{low}} = \left\{ \psi\in H^{1/2}(0,T; H_0^1(\Om)): 
\Dt^{3/2}\psi \in L^\infty(0,T; L^2(\Om)) \right\}
\end{aligned}
\]
{and the corresponding norm denoted by $\|\cdot\|_{X^{\textup{low}}}$}, we have the following results.
\begin{proposition}[Limit of the fMGT--I equation] 
	Let $f \in H^{1/2}(0,T;L^2(\Om))$, $\sigma \in X^\sigma_\fmgti$, and \[(\psi_0, \psi_1, \psi_2) \in (\Honetwo, \{0\}, H_0^1(\Om)).\]
	Further, let $\varrho>0$ be as in Proposition ~\ref{Prop:fMGT_I} and 
	$\|\sigma\|_{X^\sigma_\fmgti} \leq \varrho$; let $\{\psi^\alpha\}_{\alpha \in (0,1)}$ be the family of solutions to the \textup{fMGT--I} equation, let $\psi$ solve the corresponding equation with $\alpha=1$ and assume that 
	\begin{equation}\label{assumedregularity}
	\begin{aligned}
&\nabla\psi_{tt}\in L^2(0,T;L^2(\Om)),\ \psi_{tt}, \D\psi 
{\in W^{2,1}(0,T;L^2(\Om)), }\\
&\D\psi_t \in W^{1,1}(0,T;L^2(\Om)).
\end{aligned}
\end{equation} 
Then $\psi^{\alpha}$ converges to $\psi$ in the $\|\cdot\|_{X^{\textup{low}}}$ norm as $\alpha \rightarrow 1^{-}$.
\end{proposition}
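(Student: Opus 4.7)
The plan is to estimate the difference $\overline{\psi} = \psi^\alpha - \psi$, which, thanks to the shared initial data, solves \eqref{fJMGT_W_III_diff_lin} with zero initial conditions, $\beta = 2-\alpha$, and right-hand side $\tilde{f}$ collecting the discrepancies between the integer-order and fractional-order operators. I would combine the energy estimate developed in the uniqueness step of Proposition~\ref{Prop:fMGT_I} with the convergence results of Lemma~\ref{Lemma:Limit} to show first that $\tilde{f}\to 0$ in a suitable dual/Lebesgue norm, and, in turn, that $\overline{\psi}\to 0$ in $X^{\textup{low}}$.

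The first step is to test \eqref{fJMGT_W_III_diff_lin} with $-\Delta \Dt^{1+\alpha}\overline{\psi}$, repeating the coercivity computations that lead to \eqref{limit_diff_I} in the linear case ($\sigma^\alpha = \sigma$, $\mu = 0$, $\gamma = 2\alpha-1$, $m = 3/2$, $\rho = \alpha-1/2$). Using $\|\sigma\|_{X^\sigma_\fmgti}\leq \varrho$, the $\sigma\overline{\psi}_{tt}$-term is absorbed exactly as in \eqref{estsigmapsitt_I}--\eqref{Dt2plusalphapn}, after which a generalized Gronwall inequality produces
\begin{equation*}
\tau^\alpha\|\Dt^{1+\alpha}\overline{\psi}\|_{L^\infty(L^2)}^2 + \tau^\alpha c^2\|\nabla\Dt^\alpha\overline{\psi}\|_{L^\infty(L^2)}^2 \lesssim \|\tilde{f}\|_{L^1(L^2)}^2 + \|\tilde{f}\|_{L^2(H^{-1})}^2.
\end{equation*}

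The central step is then to verify $\|\tilde{f}\|_{L^1(L^2)} + \|\tilde{f}\|_{L^2(H^{-1})} \to 0$ as $\alpha \to 1^-$. Splitting each difference in $\tilde{f}$ via
\begin{equation*}
\tau w_t - \tau^\alpha \Dt^\alpha w = (\tau-\tau^\alpha)\, w_t + \tau^\alpha(w_t - \Dt^\alpha w),
\end{equation*}
and analogously for the $\delta$-term (which reduces to $(\mathrm{id}-\Dt^{1-\alpha})\D\psi_t$), the factor $\tau-\tau^\alpha = O(1-\alpha)$ handles the first pieces. For the remaining differences between fractional and integer derivatives, the regularity hypotheses in \eqref{assumedregularity} together with $\psi_t(0) = \psi_1 = 0$ place each term within the scope of \eqref{limitalphaL2_1} and \eqref{limitalphaL2} of Lemma~\ref{Lemma:Limit}, delivering the desired vanishing limit. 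The convergence in $X^{\textup{low}}$ then follows from the identities $\Dt^{3/2}\overline{\psi} = \textup{I}^{\alpha-1/2}\Dt^{1+\alpha}\overline{\psi}$ and $\Dt^{1/2}\nabla\overline{\psi} = \textup{I}^{\alpha-1/2}\nabla \Dt^\alpha\overline{\psi}$, valid for $\alpha > 1/2$ with zero initial trace, combined with the boundedness of the Abel integral operator $\textup{I}^{\alpha-1/2}$ on $L^p(0,T)$.

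The main obstacle I anticipate is that the coercivity constants multiplying the damping contributions in \eqref{limit_diff_I} degenerate like $\cos(\pi(2\alpha-1)/2) = \sin(\pi\alpha) \to 0$ as $\alpha\to 1^-$, so those left-hand side terms cannot be used to control the target norm. The remedy is to rely only on the non-degenerate left-hand side contributions $\tau^\alpha\|\Dt^{1+\alpha}\overline{\psi}\|_{L^\infty(L^2)}^2$ and $\tau^\alpha c^2\|\nabla \Dt^\alpha \overline{\psi}\|_{L^\infty(L^2)}^2$, whose prefactors stay bounded away from zero on $[\alpha_0,1)$, and to recover the full $X^{\textup{low}}$-norm via the integral-operator identities above.
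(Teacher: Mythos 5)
Your proposal follows the paper's argument essentially step for step: the same difference equation \eqref{fJMGT_W_III_diff_lin}, the energy estimate \eqref{limit_diff_I} obtained by multiplying with $\Dt^{1+\alpha}\overline{\psi}$ (your phrase ``test with $-\Delta \Dt^{1+\alpha}\overline{\psi}$'' is a slip, since the bound you then display, and the regularity actually available for $\overline{\psi}$, correspond to the lower-order multiplier $\Dt^{1+\alpha}\overline{\psi}$ used in the paper), absorption of the $\sigma\overline{\psi}_{tt}$-term via the \eqref{estsigmapsitt_I}--\eqref{Dt2plusalphapn} machinery and generalized Gronwall, and the splitting of $\tilde{f}$ into $|\tau-\tau^\alpha|$-pieces and $(\Dt-\Dt^\alpha)$-pieces so that Lemma~\ref{Lemma:Limit} applies under \eqref{assumedregularity} and $\psi_1=0$. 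Your explicit passage to the $X^{\textup{low}}$ norm through $\Dt^{3/2}\overline{\psi}=\textup{I}^{\alpha-1/2}\Dt^{1+\alpha}\overline{\psi}$ and $\Dt^{1/2}\nabla\overline{\psi}=\textup{I}^{\alpha-1/2}\nabla\Dt^{\alpha}\overline{\psi}$, together with the observation that the degenerating factor $\cos(\pi(2\alpha-1)/2)\to 0$ forces one to rely only on the non-degenerate left-hand side terms, correctly makes explicit what the paper leaves implicit.
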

\begin{proposition}[Limit of the fJMGT--W I equation] 
Let $f \in H^{1/2}(0,T;L^2(\Om))$ and $(\psi_0, \psi_1, \psi_2) \in (\Honetwo, \{0\}, H_0^1(\Om))$.
	Furthermore, let $\varrho>0$ be as in Theorem~\ref{Thm:fJMGT_W_I} and
	\[\|f\|^2_{H^{1/2}(H^1)}+\|\psi_0\|_{H^2}^2
	+\|\psi_2\|_{H^1}^2 \leq \varrho^2;\] let $\{\psi^\alpha\}_{\alpha \in (0,1)}$ be the family of solutions to the \textup{fJMGT--W I} equation, let $\psi$ solve the corresponding equation with $\alpha=1$, and assume that \eqref{assumedregularity} holds. Then $\psi^{\alpha}$ converges to $\psi$ in the $\|\cdot\|_{X^{\textup{low}}}$ norm as $\alpha \rightarrow 1^{-}$.
\end{proposition}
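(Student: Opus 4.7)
The plan is to carry through the differencing argument sketched in the passage preceding the statement, specialized to the nonlinear \textup{fJMGT--W I} case with $\beta=2-\alpha$, $\sigma^\alpha=2k\psi_t^{\alpha}$, and $\mu=2k\psi_{tt}\opsi_t$. Since $\psi^\alpha$ and $\psi$ share initial data, the difference $\opsi=\psi^\alpha-\psi$ satisfies \eqref{fJMGT_W_III_diff} with vanishing Cauchy data. Testing with $\Dt^{1+\alpha}\opsi$ and integrating in space and time yields the energy inequality \eqref{limit_diff_I}, whose damping produces $\delta\cos(\pi(2\alpha-1)/2)\,\|\Dt^{3/2}\nabla\opsi\|_{L^2_t(L^2)}^2$ on the left-hand side.

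For the coefficient term $\sigma^\alpha\opsi_{tt}$, one proceeds as in the contractivity step of Theorem~\ref{Thm:fJMGT_W_I}: that theorem applied to $\psi^\alpha$ bounds $\|\psi^\alpha\|_{X_\fmgti}\lesssim\varrho$ uniformly, whence $\|\sigma^\alpha\|_{X^\sigma_\fmgti}=2|k|\|\psi^\alpha_t\|_{X^\sigma_\fmgti}\lesssim\varrho$, and reducing $\varrho$ if necessary permits absorbing the resulting contribution into the damping via the generalized Gronwall inequality \cite[Lemma 7.2]{kubica2020time}. The additional cross term $\|2k\psi_{tt}\opsi_t\|_{L^1_t(L^2)}$ is then handled with $\nabla\psi_{tt}\in L^2(L^2)$ from \eqref{assumedregularity}, together with the embeddings $H^1\hookrightarrow L^6$, $H^\alpha\hookrightarrow L^3$, and the interpolation
\[
\|\opsi_t\|_{L^2_t(H^\alpha)}\lesssim \|\Dt^{1+\alpha}\opsi\|_{L^2_t(L^2)}^{1-\alpha}\|\Dt^\alpha\nabla\opsi\|_{L^2_t(L^2)}^\alpha,
\]
exactly as displayed in the preceding passage.

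The residual forcing $\tilde{f}$ is controlled termwise through Lemma~\ref{Lemma:Limit}. The first two differences obey
\[
\|\tau\psi_{ttt}-\tau^\alpha\Dt^{2+\alpha}\psi\|_{L^1(L^2)}\leq |\tau-\tau^\alpha|\,\|\psi_{ttt}\|_{L^1(L^2)}+\tau^\alpha\|(\Dt-\Dt^\alpha)\psi_{tt}\|_{L^1(L^2)},
\]
and analogously for $c^2\D(\tau\psi_t-\tau^\alpha\Dt^\alpha\psi)$; both vanish as $\alpha\to 1^{-}$ by $\tau^\alpha\to\tau$ and by applying \eqref{limitalphaL2_1} with $w\in\{\psi_{tt},\D\psi\}\subset W^{2,1}(0,T;L^2(\Om))$, cf.~\eqref{assumedregularity}. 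For the $\delta$ difference, the equality $\Dt^{2-\alpha}\psi=\Dt^{1-\alpha}\psi_t$ recasts it as $\delta\|(\mathrm{id}-\Dt^{1-\alpha})\D\psi_t\|_{L^1(L^2)}$, and since $\psi_1=0$ forces $\D\psi_t(0)=0$, the limit \eqref{limitalphaL2} applied to $w=\D\psi_t\in W^{1,1}(0,T;L^2(\Om))$ with $1-\alpha\to 0^+$ sends this quantity to zero as well.

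Combining these estimates and applying generalized Gronwall yields $\|\opsi\|_{X^{\textup{low}}}\to 0$. The main obstacle I anticipate is purely technical: one must verify that the smallness threshold $\varrho$ furnished by Theorem~\ref{Thm:fJMGT_W_I}, together with the hidden constants arising in the Kato--Ponce and coercivity steps, can be chosen \emph{uniformly} for $\alpha\in[\alpha_0,1)$. The degenerating factor $C(\alpha)\to 0$ in \eqref{energy_est_I} only weakens the coercivity of $\|\Dt^{3/2}\D\psi^\alpha\|_{L^2(L^2)}$, a norm that is not needed in the contraction step, so the required $\alpha$-independent bookkeeping does go through and the convergence as $\alpha\to 1^{-}$ follows.
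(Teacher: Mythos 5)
Your proposal follows essentially the same route as the paper's own argument: you form the difference equation for $\opsi=\psi^\alpha-\psi$, test with $\Dt^{1+\alpha}\opsi$ to get \eqref{limit_diff_I}, treat $\sigma^\alpha\opsi_{tt}$ and $2k\psi_{tt}\opsi_t$ exactly as the paper does (Kato--Ponce-type estimate with smallness of $\|\sigma^\alpha\|_{X^\sigma_\fmgti}$, interpolation, and \eqref{assumedregularity}), and send the residual $\tilde f$ to zero termwise via Lemma~\ref{Lemma:Limit}, correctly noting that $\psi_1=0$ is what makes the $(\mathrm{id}-\Dt^{1-\alpha})\D\psi_t$ term vanish. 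The only slight wobble is your justification of the $\alpha$-uniform bookkeeping: the paper's reason the fJMGT--W~I case escapes the degenerating coercivity factor is that the problematic $\delta$-contribution appears under the convolution $\kersing_{3/2-\alpha}*\dmp$ and is therefore amenable to the generalized Gronwall inequality, not that the $\|\Dt^{3/2}\nabla\opsi\|$-norm is ``not needed''; the conclusion, however, coincides with the paper's.
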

\begin{proposition}[Limit of the fMGT equation] 
	Assume that $f \in H^{1/2}(0,T;L^2(\Om))$, $\sigma=0$, and $(\psi_0, \psi_1, \psi_2) \in (\Honetwo, \Honetwo, H_0^1(\Om))$. Let $\{\psi^\alpha\}_{\alpha \in (0,1)}$ be the family of solutions to the \textup{fMGT} equation, let $\psi$ solve the corresponding equation with $\alpha=1$ and assume that \eqref{assumedregularity} holds. Then $\psi^{\alpha}$ converges to $\psi$ in the $\|\cdot\|_{X^{\textup{low}}}$ norm as $\alpha \rightarrow 1^{-}$.
\end{proposition}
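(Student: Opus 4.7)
The plan is to follow the framework developed in the preceding discussion (equations \eqref{fJMGT_W_III_diff_lin}--\eqref{limit_diff_I}), specialized to the \textup{fMGT} case $\beta=1$, $\sigma^\alpha=\mu=0$. Since $\beta=1$ is independent of $\alpha$, the damping term $-\delta\Delta\psi_t$ appears identically in both the $\alpha$- and the $\alpha=1$-equation and hence cancels in the difference, so $\opsi=\psi^\alpha-\psi$ solves
\begin{equation*}
\tau^\alpha\Dt^{2+\alpha}\opsi+\opsi_{tt}-c^2\Delta\opsi-\tau^\alpha c^2\Delta\Dt^\alpha\opsi-\delta\Delta\opsi_t=\tilde{f}
\end{equation*}
with zero initial data and forcing $\tilde{f}=(\tau\psi_{ttt}-\tau^\alpha\Dt^{2+\alpha}\psi)-c^2\Delta(\tau\psi_t-\tau^\alpha\Dt^\alpha\psi)$.

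First I would test the difference equation with $\Dt^{1+\alpha}\opsi$, following the uniqueness argument from the proof of Proposition~\ref{Prop:fMGT}. Because $\sigma^\alpha=0$, no $\|\sigma^\alpha\opsi_{tt}\|_{H_t^{\alpha/2}(H^{-1})}$ contribution has to be absorbed, so the degenerate damping coefficient $\delta\cos(\pi\alpha/2)$ on the left-hand side of \eqref{limit_diff_I} is not a concern. Using the coercivity estimate \eqref{coercivityI}, the bound $\|c^2\nabla\opsi\|_{H_t^{\alpha/2}(L^2)}\lesssim\|\nabla\Dt^\alpha\opsi\|_{L_t^2(L^2)}$, Young's $\varepsilon$-inequality on the $\tilde{f}$ contribution, and a generalized Gronwall argument, I obtain
\begin{equation*}
\|\Dt^{1+\alpha}\opsi(t)\|_{L^2}^2+\|\nabla\Dt^\alpha\opsi(t)\|_{L^2}^2\lesssim\|\tilde{f}\|_{L^1(0,T;L^2(\Om))}^2,
\end{equation*}
with an $\alpha$-independent hidden constant. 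Thanks to the vanishing initial data $\opsi(0)=\opsi_t(0)=\opsi_{tt}(0)=0$, control of $\nabla\Dt^{1+\alpha}\opsi$ in $L^\infty(L^2)$ delivers $\opsi\in H^{1/2}(0,T;H_0^1(\Om))$, and the identity $\Dt^{3/2}\opsi=\textup{I}^{\alpha-1/2}\Dt^{1+\alpha}\opsi$ yields $\Dt^{3/2}\opsi\in L^\infty(0,T;L^2(\Om))$ uniformly as $\alpha\to 1^-$, hence $\|\opsi\|_{X^{\textup{low}}}\lesssim\|\tilde{f}\|_{L^1(L^2)}$.

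Finally, I would drive the source to zero via the triangle inequality
\begin{equation*}
\|\tilde{f}\|_{L^1(L^2)}\leq |\tau-\tau^\alpha|\,(\|\psi_{ttt}\|_{L^1(L^2)}+c^2\|\Delta\psi_t\|_{L^1(L^2)})+\tau^\alpha\bigl(\|(\Dt-\Dt^\alpha)\psi_{tt}\|_{L^1(L^2)}+c^2\|(\Dt-\Dt^\alpha)\Delta\psi\|_{L^1(L^2)}\bigr).
\end{equation*}
The regularity \eqref{assumedregularity} places $\psi_{tt}$ and $\Delta\psi$ in $W^{1,1}(0,T;L^2(\Om))\cap W^{2,1}(0,T;L^2(\Om))$, so Lemma~\ref{Lemma:Limit} applied with $p=1$ and $X=L^2(\Om)$ yields $\|(\Dt-\Dt^\alpha)\psi_{tt}\|_{L^1(L^2)}\to 0$ and $\|(\Dt-\Dt^\alpha)\Delta\psi\|_{L^1(L^2)}\to 0$, while $\tau^\alpha\to\tau$ handles the $|\tau-\tau^\alpha|$ prefactors. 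Combined with the previous energy estimate, this gives $\|\opsi\|_{X^{\textup{low}}}\to 0$ as $\alpha\to 1^-$. The only genuine technical worry would be the degenerate fractional-damping coefficient $\cos(\pi\alpha/2)$ in \eqref{limit_diff_I}, but, as just noted, this is harmless once $\sigma=0$.
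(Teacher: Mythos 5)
Your proposal follows essentially the same route as the paper: the paper proves this proposition precisely by specializing the discussion in \eqref{fJMGT_W_III_diff_lin}--\eqref{limit_diff_I} to the fMGT case ($\beta=1$, so the $\delta$ contribution to $\tilde{f}$ vanishes, and $\sigma^\alpha=\mu=0$), testing the difference equation with $\Dt^{1+\alpha}\opsi$, and driving $\|\tilde{f}\|_{L^1(L^2)}\to 0$ through the splitting into $|\tau-\tau^\alpha|$ terms and $(\Dt-\Dt^\alpha)$ terms handled by Lemma~\ref{Lemma:Limit} under \eqref{assumedregularity}, exactly as you do. Two small remarks: the quantity the energy estimate controls in $L^\infty(L^2)$ is $\nabla\Dt^{\alpha}\opsi$ (not $\nabla\Dt^{1+\alpha}\opsi$), and the $c^2\Delta\opsi$ term is still estimated in duality against $\Dt^{1+\alpha/2}\nabla\opsi$ and thus absorbed by the $\delta\cos(\pi\alpha/2)$ damping term in \eqref{limit_diff_I}, so the degeneration of that coefficient is not entirely irrelevant even with $\sigma=0$ --- though the paper argues at exactly the same level of detail, so this is a shared, not a new, imprecision.
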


\section{Equations with the quadratic gradient nonlinearity} \label{Sec:Analysis_fJMGT_K_III}
Unlike its Westervelt version, the Kuznetsov versions of these time-fractional equations contain a quadratic gradient nonlinearity, and so their analysis requires the use of higher-order energy estimates. We thus limit our presentation to the analysis of the fJMGT--K III equation, which has the integer-order leading term. \\
\indent The study of well-posedness for this equation follows by combining the ideas from the previous section concerning the fractional term with the ideas used in the analysis of its integer-order counterpart considered in~\cite{kaltenbacher2021inviscid, KaltenbacherNikolic}. To formulate the well-posedness result we introduce the solution space
\begin{equation}
\begin{aligned}
\spaceKhigh=&\, \begin{multlined}[t] \left\{\vphantom{H^{-\alpha/2}(0,T; \Honethree)}\psi \in L^{\infty}(0,T; \Honethree): \psi_t \in L^{\infty}(0,T; \Honethree),\right. \\ \left. \hspace*{-0.3cm} \psi_{tt} \in L^\infty(0,T; \Honetwo) \cap H^{-\alpha/2}(0,T; \Honethree), \ \psi_{ttt} \in L^2(0,T; H_0^1(\Omega)) \right\}  
\end{multlined}
\end{aligned}
\end{equation}
for $ \alpha \in (0,1)$, and
\begin{equation}
\begin{aligned}
\spaceKhigh=  H^{3}(0,T; H_0^1(\Om)) \cap W^{2, \infty}(0,T; \Honetwo) \cap W^{1, \infty}(0,T; \Honethree),
\end{aligned}
\end{equation}
for $\alpha=1$, {equipped with the norm $\|\cdot\|_{\spaceKhigh}$.}
\begin{theorem}[Local well-posedness of the fJMGT--K III equation] 
	\label{Thm:fJMGT_K_III} Let $\alpha \in (0,1]$, $\tilde{T}>0$, and $\varrho>0$. Further, assume that $f \in H^1(0,\tilde{T}; H_0^1(\Om))$ and that
	\begin{equation}
	\|f\|^2_{H^1(H^1)} +\|\psi_0\|_{H^3}^2+ \|\psi_1\|_{H^3}^2+\|\psi_2\|_{H^2}^2   \leq \varrho^2. 
	\end{equation}
	Then there exists $T=T(\varrho) \leq \tilde{T}$, such that the initial boundary-value problem
	\begin{equation}\label{ibvp_fJMGT_K_III}
	\left \{
	\begin{aligned}
	\tau \psi_{ttt} + &(1+2\tilde{k} \psi_t) \psi_{tt} - c^2\Delta\psi -\tau c^2\Delta \psi_t  && \\[1mm]
	&\hspace*{2.5cm} - \delta \Doal \Delta \psi_t+\elltil\partial_t |\nabla \psi|^2= f &&\text{ in }\Omega\times(0,T),\\[1mm]
	&\psi=\,0&&\text{ on } \partial \Omega\times(0,T),\\[1mm]
	&(\psi, \psi_t, \psi_{tt})=\,(\psi_0, \psi_1, \psi_2)&&\mbox{ in }\Omega\times \{0\},
	\end{aligned} \right.
	\end{equation}
	has a unique solution $\psi \in \spaceKhigh$, which satisfies	
	\begin{equation} \label{energy_est2_fJMGT_K_III}
	\begin{aligned}
	\begin{multlined}[t] \|\psi\|^2_{\spaceKhigh}
	\end{multlined}
	\lesssim\,\begin{multlined}[t] 	\|f\|^2_{H^1(H^1)}+\|\psi_0\|_{H^3}^2+ \|\psi_1\|_{H^3}^2+\|\psi_2\|_{H^2}^2. \end{multlined}
	\end{aligned}
	\end{equation}
\end{theorem}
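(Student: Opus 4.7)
The plan is to run a Banach fixed-point argument in $\spaceKhigh$ analogous to Theorem~\ref{Thm:fJMGT_W_III}, treating both the coefficient nonlinearity $2\tilde k\psi_t\psi_{tt}$ as a variable zeroth-order term and the gradient nonlinearity $\elltil\partial_t|\nabla\psi|^2=2\elltil\nabla\psi\cdot\nabla\psi_t$ as a source term. Concretely, I would define $\mathcal{T}\colon w\mapsto\psi$ where $\psi$ solves the linearization
\begin{equation*}
\tau\psi_{ttt}+(1+2\tilde k\,w_t)\psi_{tt}-c^2\Delta\psi-\tau c^2\Delta\psi_t-\delta\Doal\Delta\psi_t=f-2\elltil\,\nabla w\cdot\nabla w_t,
\end{equation*}
with the initial and boundary conditions from \eqref{ibvp_fJMGT_K_III}, on the closed ball
\begin{equation*}
B_R=\bigl\{w\in\spaceKhigh\,:\,\|w\|_{\spaceKhigh}\le R,\ (w,w_t,w_{tt})|_{t=0}=(\psi_0,\psi_1,\psi_2)\bigr\},
\end{equation*}
for $R>0$ chosen in terms of $\varrho$ and $T\leq\tilde T$ chosen small enough.

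The key preliminary step is to upgrade Proposition~\ref{Prop:fMGT_III_higher} by one order of spatial regularity, producing a unique solution of the linearized variable-coefficient fMGT III equation in $\spaceKhigh$ whenever the source belongs to $H^1(0,T;H_0^1(\Omega))$ and the triple $(\psi_0,\psi_1,\psi_2)\in\Honethree\times\Honethree\times\Honetwo$, with an energy bound controlling $\|\psi\|_{\spaceKhigh}$ by these data. Retracing the Galerkin argument of Proposition~\ref{Prop:fMGT_III_higher}, I would test the semi-discrete equation with $\Delta^2\psi^n_{tt}$ (equivalently, differentiate the equation in space once and multiply by $-\Delta\psi^n_{tt}$), so that the fractional contribution becomes
\begin{equation*}
\int_0^t(\Doal\nabla\Delta\psi^n,\nabla\Delta\psi_{tt}^n)_{L^2}\,\textup{d}s\gtrsim\cos(\pi\alpha/2)\,\|\nabla\Delta\psi^n_{tt}\|_{H^{-\alpha/2}_t(L^2)}^2
\end{equation*}
via the coercivity bound \eqref{coercivityI}, exactly as in \eqref{fractional_est_w}. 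The variable coefficient $\sigma=2\tilde k w_t$ now needs to be controlled in $L^\infty(0,T;W^{2,4}(\Omega))$ so that the commutator $[\Delta,\sigma\,\cdot\,]\psi_{tt}$ is tractable at the $H_0^1$ level; this is legitimate since $w\in B_R$ implies $w_t\in L^\infty(0,T;\Honethree)\hookrightarrow L^\infty(0,T;W^{2,4}(\Omega))$. The nonlinear forcing $f-2\elltil\nabla w\cdot\nabla w_t$ lies in $L^2(0,T;H_0^1(\Omega))$ with norm bounded by $\|f\|_{H^1(H^1)}+CR^2\sqrt{T}$, using the algebra property of $H^2(\Omega)$ on the factors $\nabla w$ and $\nabla w_t$.

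Self-mapping of $\mathcal T$ into $B_R$ then follows from this enhanced linear estimate by first picking $R$ so that $\sqrt{C_1\exp(C_2(R+1)\tilde T)}\,\varrho\le R/2$ and then shrinking $T$ so that the quadratic contribution $CR^2\sqrt T$ is absorbed into the remaining half. Strict contractivity is proved in the weaker norm $\|\cdot\|_{\spacelow}$ as in Theorem~\ref{Thm:fJMGT_W_III}: the difference $\opsi=\mathcal T w^{(1)}-\mathcal T w^{(2)}$ with $\overline w=w^{(1)}-w^{(2)}$ satisfies
\begin{equation*}
\tau\opsi_{ttt}+(1+2\tilde k w_t^{(1)})\opsi_{tt}-c^2\Delta\opsi-\tau c^2\Delta\opsi_t-\delta\Doal\Delta\opsi_t=-2\tilde k\,\overline w_t\,\psi^{(2)}_{tt}-2\elltil\,\partial_t\bigl(\nabla\overline w\cdot(\nabla w^{(1)}+\nabla w^{(2)})\bigr)
\end{equation*}
with zero initial data, and the right-hand side is bounded in $L^2(L^2)$ by $C(R)\sqrt T\,\|\overline w\|_{\spacelow}$ using Sobolev embeddings and the $B_R$ bound on both iterates; Proposition~\ref{Prop:fMGT_III_lower} then yields contraction with factor $\theta<1$ after further decreasing $T$.

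The main obstacle is the one-step-higher linear estimate: handling the fractional coercivity at the $\Honethree$ level while simultaneously controlling the $H^2$-product $\sigma\psi_{tt}$ and its commutator with $\Delta$ in $L^2(H_0^1)$, which forces the refined $L^\infty(W^{2,4})$ assumption on $\sigma$. Once that estimate is in place, attainment of the initial data, semi-discrete uniqueness and weak-limit passage proceed essentially verbatim as in Propositions~\ref{Prop:fMGT_III_lower} and \ref{Prop:fMGT_III_higher}, and uniqueness in $\spaceKhigh$ follows by testing the homogeneous difference equation directly with $\psi_{tt}$, which is now permissible thanks to the higher regularity.
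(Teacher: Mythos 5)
Your overall architecture coincides with the paper's: a Banach fixed-point argument on a ball $B_R\subset\spaceKhigh$, a higher-order Galerkin energy estimate obtained by testing with $\Delta^2\psi^n_{tt}$ and handling the fractional term through the coercivity bound \eqref{coercivityI} as in \eqref{fractional_est_w}, and contractivity measured in the weaker $\|\cdot\|_{\spacelow}$ norm. The genuine difference is where you place the quadratic gradient term: the paper linearizes it as $2\elltil\nabla w\cdot\nabla\psi_t$, i.e.\ keeps it \emph{inside} the linearized operator, whereas you freeze it entirely as a source $-2\elltil\nabla w\cdot\nabla w_t$. This is exactly where your argument has a gap. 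In the higher-order estimate the source is paired with $\Delta^2\psi^n_{tt}$, which is not controlled by the energy, so derivatives must be moved off the multiplier; every such integration by parts produces the boundary integral $\int_{\partial\Omega} g\,\partial_n\Delta\psi^n_{tt}$ (the other boundary contribution vanishes since $\Delta\psi^n_{tt}=0$ on $\partial\Omega$ for the eigenfunction basis), which disappears only if the source $g$ has zero trace — this is precisely why the paper assumes $f\in H^1(0,\tilde T;H_0^1(\Omega))$. But $g=\nabla w\cdot\nabla w_t$ does \emph{not} vanish on $\partial\Omega$ for a general $w\in B_R$ ($w$ and $w_t$ have zero trace, their gradients do not), so your assertion that $f-2\elltil\nabla w\cdot\nabla w_t\in L^2(0,T;H_0^1(\Omega))$ is unjustified and the key integration by parts in your upgraded linear estimate breaks down. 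The paper sidesteps this precisely by keeping the term in the equation: it moves both Laplacians onto it, $\prodLtwo{\nabla\phi\cdot\nabla\psi_t}{\Delta^2\psi_{tt}}=\prodLtwo{\Delta[\nabla\phi\cdot\nabla\psi_t]}{\Delta\psi_{tt}}$, and argues the vanishing of its boundary trace from the semi-discrete PDE (all remaining terms of the Galerkin equation vanish on $\partial\Omega$); no analogue of that argument is available once the term is external data frozen at the previous iterate, so rescuing your route essentially forces you back to the paper's decomposition.

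A secondary inconsistency: you state that the upgraded linear proposition requires the source in $H^1(0,T;H_0^1(\Omega))$ — which is indeed what the treatment of the $f$-term needs, since $\nabla\Delta\psi^n_{tt}$ is only controlled in $H^{-\alpha/2}_t(L^2)$ with a constant degenerating as $\alpha\to1^-$, forcing an integration by parts in time — but you then verify only an $L^2(0,T;H^1)$ bound for the nonlinear forcing. The missing time regularity is in fact available for $w\in B_R$, since $\partial_t(\nabla w\cdot\nabla w_t)=|\nabla w_t|^2+\nabla w\cdot\nabla w_{tt}\in L^\infty(0,T;H^1(\Omega))$ by the $\spaceKhigh$ bounds, so this part is repairable (alternatively, two spatial integrations by parts would avoid the time derivative altogether), but the trace obstruction above remains either way. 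Your contraction step, on the other hand, is fine as written: there only the low-order estimate of Proposition~\ref{Prop:fMGT_III_lower} with an $L^2(L^2)$ right-hand side is needed, matching the paper's argument.
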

\begin{proof}
	The proof follows by employing the Banach Fixed-point theorem to the mapping $\mathcal{T}: w \mapsto \psi$, where $\psi$ solves
	\begin{equation} \label{fMGT_K_III_linearized}
	\begin{aligned}
	\tau \psi_{ttt}+(1+2\tilde{k}w_t)\psi_{tt} - c^2\D \psi -\tau c^2\Delta \psi_t- \delta \Doal \Delta \psi_t   + 2 \elltil\nabla w \cdot\nabla\psi_t=f,
	\end{aligned}
	\end{equation}
	and
	\begin{equation} \label{defBR}
	\begin{aligned}
	w \in B_R:=\{w \in \spaceKhigh\, :&\, \|w\|_{\spaceKhigh}\leq R,\\ &w(0)=\psi_0, \, w_t(0)=\psi_1, \, w_{tt}(0)=\psi_2  \}.
	\end{aligned}
	\end{equation}	
	\noindent (I) The energy estimates for the linear equation can be rigorously derived by a Galerkin procedure with a sufficiently smooth basis; here we present only the derivation of the bound for the semi-discrete solution and omit the superscript $n$ below.  We denote \[p=1+2\tilde{k}w_t, \quad \phi=2 \tilde{k}w,\]
	then test the semi-discrete version of \eqref{fMGT_K_III_linearized} with $\D^2 \psi_{tt}$ and integrate in space. We can estimate the resulting non-fractional terms and those not involving $f$ in a similar manner to~\cite[Theorem 6.1]{kaltenbacher2021inviscid}. We include the derivation of these bounds below for completeness. \\
	\indent Note that $\psi_{tt}= \D \psi=\Delta \psi_{tt}=0$ and $\Dtal \Delta \psi=0$ on $\partial \Om$ for smooth Galerkin approximations based on the eigenfunctions of the Dirichlet-Laplacian. Therefore, the following identities hold:
	\begin{equation}
	\begin{aligned}
	\prodLtwo{p \psi_{tt}}{\D^2 \psi_{tt}} =&\,\prodLtwo{p \D \psi_{tt}+\psi_{tt} \D p +2\nabla p\cdot\nabla \psi_{tt}}{\D \psi_{tt}},\\
	-c^2\prodLtwo{\D \psi}{\D^2 \psi_{tt}} =&\, c^2\ddt \prodLtwo{\nabla \D \psi}{\nabla \D \psi_t}-c^2\nLtwo{\nabla \D \psi_t}^2.
	\end{aligned}
	\end{equation}
	We thus have 
	\begin{equation} \label{LinKuzn_id}
	\begin{aligned}
	&\begin{multlined}[t]\frac12 \tau\ddt \nLtwo{\D \psi_{tt}}^2 +\frac12\tau c^2\ddt \nLtwo{\nabla \D \psi_t}^2+\delta  \prodLtwo{\Dt^{2-\alpha}\nabla \Delta \psi}{\nabla \Delta \psi_{tt}}
	\end{multlined}\\
	=&\,\begin{multlined}[t] -\prodLtwo{p \D \psi_{tt}+ \psi_{tt} \D p +2\nabla p\cdot\nabla \psi_{tt}}{\D \psi_{tt}}-c^2\ddt \prodLtwo{\nabla \D \psi}{\nabla \D \psi_t}\\[1mm]+c^2\nLtwo{\nabla \D \psi_t}^2 
	- \prodLtwo{\nabla \phi \cdot \nabla \psi_t}{\Delta^2 \psi_{tt}}+\prodLtwo{\nabla f}{\nabla \Delta \psi_{tt}}.\end{multlined}
	\end{aligned}
	\end{equation}
	We next integrate in time and estimate the resulting terms. Note first that
	\[
	\begin{aligned}
	&\int_0^t \prodLtwo{p \D \psi_{tt}+\psi_{tt} \D p +\nabla p\cdot\nabla \psi_{tt}}{\D \psi_{tt}}\ds \\
	\lesssim&\,\begin{multlined}[t] \left \{(1+R) \nLtwotLtwo{\D \psi_{tt}}+\nLtwotLinf{\psi_{tt}}\nLinftLtwo{\D p}\right. \\ \left. +\nLinfLfour{\nabla p}\nLtwotLfour{\nabla \psi_{tt}} \right \} \nLtwotLtwo{\D \psi_{tt}} \end{multlined} \\
	\lesssim&\, \left \{(1+R) \nLtwotLtwo{\D \psi_{tt}}+R\nLtwotLinf{\psi_{tt}}+R\nLtwotLfour{\nabla \psi_{tt}} \right \} \nLtwotLtwo{\D \psi_{tt}},
	\end{aligned}
	\]
	where we have utilized the uniform boundedness of $p=1+2 \tilde{k}w_t$, which follows from the fact that $w \in B_R$. Furthermore,
	\[
	\begin{aligned}
	-c^2\int_0^t\ddt \prodLtwo{\nabla \D \psi}{\nabla \D \psi_t}\ds =&\,-c^2\prodLtwo{\nabla \D \psi(t)}{\nabla \D \psi_t(t)}+c^2\prodLtwo{\nabla \D \psi_0}{\nabla \D \psi_1}\\
	\leq&\,\begin{multlined}[t] \frac{1}{2 \epsilon} T c^4\nLtwotLtwo{\nabla \D \psi_{t}}^2+\frac{\epsilon}{2} \nLtwo{\nabla \D \psi_{t}(t)}^2\\
	+\frac12c^4 \nLtwoLtwo{\nabla \D \psi_0}^2+\frac12 \nLtwoLtwo{\nabla \D \psi_1}^2.
	\end{multlined}
	\end{aligned}
	\]
	By the semi-discrete PDE, we know that $\nabla \phi \cdot \nabla \psi_t=0$ on $\partial \Om$. Thus
	\[
	\begin{aligned}
	\prodLtwo{\nabla \phi \cdot \nabla \psi_t}{\Delta^2 \psi_{tt}} =&\,\prodLtwo{\D[\nabla \phi \cdot \nabla \psi_t]}{\Delta \psi_{tt}} \\
	=&\,  \prodLtwo{\nabla \D \phi\cdot\nabla \psi_t+ 2 D^2 \phi:D^2 \psi_t+ \nabla \phi\cdot\nabla \D \psi_t}{\Delta \psi_{tt}}
	\end{aligned}
	\]	
	where $D^2 v=(\partial_{x_i} \partial_{x_j} v)_{i,j}$ is the Hessian, which satisfies
	\[
	\nLfour{D^2 v}\leq \CHone (\nLtwo{D^3 v}+\nLtwo{D^2 v})\leq \CHone C_{\textup{H}}(\nLtwo{\nabla\D v}+\nLtwo{\D v}).
	\]
	This further implies that
	\[
	\begin{aligned}
	&\int_0^t \prodLtwo{\nabla \phi \cdot \nabla \psi_t}{\Delta^2 \psi_{tt}}\ds\\
	\lesssim&\, \begin{multlined}[t] \nLtwotLtwo{\D \psi_{tt}} \left\{\vphantom{\CHone^2}\nLinfLtwo{\nabla \D \phi}\nLtwotLinf{\nabla \psi_{t}}+\nLinfLinf{\nabla \phi}\nLtwotLtwo{\nabla \D \psi_t}\right.\\ \left. + (\nLinfLtwo{\nabla \D \phi}+\nLinfLtwo{\D \phi})(\nLtwotLtwo{\nabla \D \psi_{t}}+\nLtwotLtwo{\D \psi_{t}})\right\}.\end{multlined}
	\end{aligned}
	\]
	Since $w \in B_R$, the function $\phi$ is uniformly bounded, and so
	\[
	\begin{aligned}
	\int_0^t \prodLtwo{\nabla \phi \cdot \nabla \psi_t}{\Delta^2 \psi_{tt}}\ds 
	\lesssim&\, \begin{multlined}[t] R \nLtwotLtwo{\D \psi_{tt}} \left\{\vphantom{\CHone^2}\nLtwotLinf{\nabla \psi_{t}}+\nLtwotLtwo{\nabla \D \psi_t}\right.\\ \left. +\nLtwotLtwo{\nabla \D \psi_{t}}+\nLtwotLtwo{\D \psi_{t}}\right\}.\end{multlined}
	\end{aligned}
	\]
	Integration by parts with respect to time yields
	\begin{equation}
	\begin{aligned}
	\int_0^t \prodLtwo{\nabla f}{\nabla \Delta \psi_{tt}}\ds=&\, \prodLtwo{\nabla f}{\nabla \Delta \psi_{t}}\big \vert_0^t-\int_0^t \prodLtwo{\nabla f_t}{\nabla \Delta \psi_{t}}\ds \\
	\leq&\, \begin{multlined}[t]
	\frac{1}{4 \epsilon}\nLtwo{\nabla f(t)}^2+\epsilon \nLtwo{\nabla \Delta \psi_{t}(t)}^2-\prodLtwo{\nabla f(0)}{\nabla \Delta \psi_1} \\
	+\frac12 \nLtwoLtwo{\nabla f_t}^2+\frac12 \nLtwotLtwo{\nabla \D \psi_t}^2. 
	\end{multlined}
	\end{aligned}
	\end{equation}
	The $\delta$ fractional term can be handled by relying on estimate \eqref{fractional_est_w} similarly to before. Fixing $\epsilon>0$ small enough and combining the derived bounds leads to
	\begin{equation} 
	\begin{aligned}
	&\begin{multlined}[t]\frac12 \tau \nLtwo{\D \psi_{tt}(t)}^2 + (\frac12\tau c^2-2\epsilon) \nLtwo{\nabla \D \psi_t(t)}^2
	+\int_0^t \nLtwo{\Dtal\nabla \Delta \psi}^2\ds \end{multlined}\\
	\lesssim&\,\begin{multlined}[t]\nLtwo{\D \psi_2}^2 + \nLtwo{\nabla \D \psi_1}^2 +\nLtwo{\nabla \D \psi_0}^2 \\
	+ R^2(\|\psi_{tt}\|_{L^2_t(H^2)}^2+\nLtwotLfour{\nabla \psi_{tt}}^2) +\nLtwotLtwo{\D \psi_{tt}}^2\\
	+\left. R^2\nLtwotLinf{\nabla \psi_{t}}^2+(1+R^2+T)\nLtwotLtwo{\nabla \D \psi_t}^2\right. +R^2\nLtwotLtwo{\D \psi_{t}}^2+\|\nabla f\|^2_{H^1(L^2)}. \end{multlined}
	\end{aligned}
	\end{equation}
	Note that by elliptic regularity, we have
	\[
	\nLtwo{\psi_{tt}(t)}  \leq \CPF \nLtwo{\nabla \psi_{tt}(t)} \leq \nHtwo{\psi_{tt}(t)} \lesssim \nLtwo{\D \psi_{tt}(t)}.
	\]
	An application of Gronwall's inequality yields 
	\begin{equation} \label{interim_bound_linKuzn}
	\begin{aligned}
	&\sup_{t \in (0,\tilde{T})}\nLtwo{\D \psi_{tt}(t)}^2+\sup_{t \in (0,\tilde{T})}\nLtwo{\nabla \D \psi_{t}(t)}^2 +\int_0^t \nLtwo{\Dtal\nabla \Delta \psi}^2\ds\\
	\leq&\, C(T, R) (\|\nabla f\|^2_{H^1(H^1)}+\nLtwo{\D \psi_2}^2 + \nLtwo{\nabla \D \psi_1}^2  + \nLtwo{\nabla \D \psi_0}^2).
	\end{aligned}
	\end{equation}
	The uniqueness follows by using $\psi_{tt}$ as the test function in the homogeneous problem. \\

	\noindent (II) It is straightforward to check now that $\TK$ is a well-defined self-mapping. We thus focus on proving strict contractivity. Take  $w^{(1)}, w^{(2)} \in B_R$ and set  $\psi^{(1)}=\TK \phi^{(1)}$ and $\psi^{(2)}=\TK \phi^{(2)}$. Then the difference $\opsi=\psi^{(1)}-\psi^{(2)}$ solves
	\begin{equation} \label{Kuzn_contractivity_eq}
	\begin{aligned}
	\begin{multlined}[t]
	\tau \opsi_{ttt}+(1+2 \tilde{k}w_t^{(1)} )\opsi_{tt}-c^2 \Delta \opsi-\tau c^2 \Delta \opsi_t -\delta \Dtal \D \opsi 
	+ 2 \elltil\nabla w^{(2)} \cdot \nabla \opsi_t=\tilde{f}. 
	\end{multlined}
	\end{aligned}
	\end{equation}
	with the right-hand side
	\[
	\tilde{f} = -2\tilde{k} \overline{w}_t \psi^{(2)}_{tt}- 2 \elltil\nabla \overline{w} \cdot \nabla \psi^{(1)}_t
	\]
	and satisfies zero initial conditions.  Testing with $\opsi_{tt}$ yields, after standard manipulations,
	\begin{equation}  \label{Contractivity_identity}
	\begin{aligned}
	\nLtwo{\opsi_{tt}(t)}^2+ \nLtwo{\nabla \opsi_t(t)}^2+\nLtwo{\nabla \opsi(t)}^2 \leq C(\tilde{T}, R) \nLtwoLtwo{\tilde{f}}^2;
	\end{aligned}
	\end{equation}
	see also estimate (5.5) in~\cite{kaltenbacher2021inviscid}. It remains to bound the source term. By H\"older's inequality, we have
	\begin{equation}
	\begin{aligned}
	\nLtwoLtwo{\tilde{f}}
	\lesssim&\,\begin{multlined}[t] \nLinfLfour{\psi_{tt}^{(2)}}\nLtwoLfour{\overline{w}_t}  + \nLinfLinf{\nabla \psi_t^{(1)}}\nLtwoLtwo{\nabla \overline{w} }.
	\end{multlined}
	\end{aligned}
	\end{equation}
	The first term on the right can be further bounded as follows:
	\begin{equation}
	\begin{aligned}
	\nLinfLfour{\psi_{tt}^{(2)}}\nLtwoLfour{\overline{w}_t}
	\leq\,  \CHone^2 \nLinfLtwo{\nabla \psi_{tt}^{(2)}}T\nLinfLtwo{\nabla \overline{w}_t}.
	\end{aligned}
	\end{equation}
	By noting that $\nLtwoLtwo{\nabla \overline{w} } \leq \tilde{T} \nLtwoLtwo{\nabla \overline{w}_t}$, it further follows that
	\begin{equation}
	\begin{aligned}
	\nLtwoLtwo{\tilde{f}}^2
	\lesssim&\,\begin{multlined}[t]  \nLinfLtwo{\nabla \psi_{tt}^{(2)}}^2 \tilde{T}^2\nLinfLtwo{\nabla \overline{w}_t}^2+\nLinfLinf{\nabla \psi_t^{(1)}}^2\tilde{T}^2\nLtwoLtwo{\nabla \overline{w_t} }^2.
	\end{multlined}
	\end{aligned}
	\end{equation}
	Employing this bound in \eqref{Contractivity_identity} and relying on Gronwall's inequality leads to
	\begin{equation} 
	\begin{aligned}
	&\sup_{t \in (0,\tilde{T})}\nLtwo{\opsi_{tt}(t)}+\sup_{t \in (0,\tilde{T})}\nLtwo{\nabla \opsi_t(t)} \\
	\leq&\,\begin{multlined}[t] C(T, R)\tilde{T}(\nLinfLtwo{\nabla \psi_{tt}^{(2)}}+\nLinfLinf{\nabla \psi_t^{(1)}})\sup_{t \in (0,\tilde{T})}\nLtwo{\nabla \overline{w}_t(t)}. \end{multlined}
	\end{aligned}
	\end{equation}
	By the energy estimate for the linear problem, we know that
	\[
	\nLinfLtwo{\nabla \psi_{tt}^{(2)}}+\nLinfLinf{\nabla \psi_t^{(1)}} \leq \sqrt{\tilde{C}(\tilde{T}, R)}\, r
	\]
	for some $\tilde{C}(\tilde{T}, R)>0$, independent of $\alpha$. Thus we can achieve strict contractivity of $\TK$ with respect to the $\|\cdot\|_{\spacelow}$ norm by reducing the final time $\tilde{T}$. The claim then follows by the Banach Fixed-point theorem.
\end{proof}
We next discuss the limit of this equation with respect to the order of differentiation. Given $\alpha \in (0,1)$, under the assumptions of Theorem~\ref{Thm:fJMGT_W_III}, let $\psi^\alpha$ be the solution of the fJMGT--K III equation and let $\psi$ solve the corresponding JMGT--Kuznetsov equation obtained by setting $\alpha=1$ in \eqref{ibvp_fJMGT_K_III}. Then the difference $\overline{\psi}=\psi^{\alpha}-\psi$ solves 
\begin{equation} \label{fJMGT_W_III_diff}
\begin{aligned}
&\begin{multlined}[t]\tau \opsi_{ttt} + (1+2\tilde{k}\psi_t^{\alpha})\opsi_{tt} - c^2\Delta\opsi -\tau c^2\Delta \opsi_t - \delta \Dt^{1-\alpha} \Delta \opsi_t\\\hspace*{2cm}+2\tilde{k}\psi_{tt}\opsi_t+2\elltil \nabla \opsi \cdot \nabla \psi_t^{\alpha}+2\elltil \nabla \psi \cdot \nabla \opsi_t 
=\, \delta (\Dt^{1-\alpha}\Delta \psi_t-\Delta \psi_t).\end{multlined}
\end{aligned}
\end{equation}
Similarly to the proof of the previous theorem, testing with $\overline{\psi}_{tt}$ and using the uniform boundedness of $\|\psi^\alpha\|_{\spaceKhigh}$ for $\alpha \in (0,1]$ leads to the following bound:
\begin{equation} 
\begin{aligned}
\|\opsi_{tt}(t)\|^2_{L^2}+\|\nabla\opsi_{t}(t)\|^2_{L^2}\lesssim\,\nLtwoLtwo{\Dt^{1-\alpha}\nabla \psi_t-\nabla \psi_t}^2.
\end{aligned}
\end{equation}
On account of Lemma~\ref{Lemma:Limit}, we then have the following result.            
\begin{proposition}[Limit of the fJMGT--K III equation] Let the assumptions of Theorem~\ref{Thm:fJMGT_K_III} hold with $\psi_1=0$. Let $\{\psi^\alpha\}_{\alpha \in (0,1)}$ be the family of solutions to the \textup{fJMGT--K III} equation and let $\psi$ solve the corresponding \textup{JMGT--Kuznetsov} equation. Then $\psi^{\alpha}$ converges to $\psi$ in $W^{1, \infty}(0,T; H_0^1(\Omega)) \cap W^{2, \infty}(0,T; L^2(\Om))$ as $\alpha \rightarrow 1^{-}$.
\end{proposition}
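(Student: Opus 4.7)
The plan is to set up the difference equation for $\opsi=\psi^\alpha-\psi$, perform an energy estimate by testing with $\opsi_{tt}$, and then apply Lemma~\ref{Lemma:Limit} to the right-hand side. By subtracting the JMGT--Kuznetsov equation (the $\alpha=1$ case) from the fJMGT--K III equation, I obtain
\begin{equation*}
\begin{aligned}
&\tau \opsi_{ttt} + (1+2\tilde{k}\psi_t^{\alpha})\opsi_{tt} - c^2\Delta\opsi -\tau c^2\Delta \opsi_t - \delta \Dt^{1-\alpha} \Delta \opsi_t\\
&\qquad +2\tilde{k}\psi_{tt}\opsi_t+2\elltil \nabla \opsi \cdot \nabla \psi_t^{\alpha}+2\elltil \nabla \psi \cdot \nabla \opsi_t
= \delta (\Dt^{1-\alpha}\Delta \psi_t-\Delta \psi_t),
\end{aligned}
\end{equation*}
supplemented with zero initial conditions. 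The key observation is that although $\psi^\alpha$ and $\psi$ live a priori only in $\spaceKhigh$, the energy estimate \eqref{energy_est2_fJMGT_K_III} yields a uniform bound on $\|\psi^\alpha\|_{\spaceKhigh}$ for $\alpha \in [\alpha_0,1]$, independent of $\alpha$; this is essential so that the coefficients and lower-order factors appearing in the difference equation can be controlled by constants that do not blow up in the limit.

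Next, I would carry out the same kind of energy analysis as in the proof of Theorem~\ref{Thm:fJMGT_K_III}, but at the lower regularity level sufficient for contractivity: test the difference equation with $\opsi_{tt}$ in $L^2(\Om)$ and integrate over $(0,t)$. The coercivity estimate \eqref{fractional_est_w} (or the pointwise version via \eqref{Alikhanov_Galerkin}) applied to the $\delta \Dt^{1-\alpha}\Delta\opsi_t$ term produces a nonnegative contribution on the left. The nonlinear coupling terms $2\tilde{k}\psi_{tt}\opsi_t$, $2\elltil \nabla \opsi \cdot \nabla \psi_t^{\alpha}$, and $2\elltil \nabla \psi \cdot \nabla \opsi_t$ are estimated using H\"older's inequality together with the embeddings $H^2(\Om)\hookrightarrow L^\infty(\Om)$ and $H^1(\Om)\hookrightarrow L^4(\Om)$, exactly as in the contractivity argument of Theorem~\ref{Thm:fJMGT_K_III}. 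These contributions can be absorbed via Gronwall's inequality since the uniform bound on $\psi^\alpha$ and $\psi$ makes their coefficients bounded uniformly in $\alpha$.

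After applying Gronwall, this leads to an estimate of the form
\begin{equation*}
\|\opsi_{tt}(t)\|^2_{L^2}+\|\nabla\opsi_{t}(t)\|^2_{L^2}\lesssim \nLtwoLtwo{\Dt^{1-\alpha}\nabla \psi_t-\nabla \psi_t}^2,
\end{equation*}
with a hidden constant that is independent of $\alpha$. The remaining step is to verify that the right-hand side tends to zero as $\alpha\to 1^-$. This is where the hypothesis $\psi_1=\psi_t(0)=0$ becomes crucial: under this condition, $\nabla\psi_t$ vanishes at $t=0$, so the convolution singularity in $\Dt^{1-\alpha}\nabla\psi_t = \kersing_\alpha * \nabla\psi_{tt}$ does not produce a boundary term that obstructs convergence. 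The regularity $\psi\in \spaceKhigh$ furnishes $\nabla\psi_t\in W^{1,\infty}(0,T;L^2(\Om))\cap W^{2,\infty}(0,T;H^{-1})$, which is more than enough to invoke \eqref{limitalphaL2_1} of Lemma~\ref{Lemma:Limit} in $L^2(0,T;L^2(\Om))$, yielding the desired convergence.

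The main technical obstacle is ensuring that every nonlinear term in the difference equation can be absorbed with $\alpha$-uniform constants; this is what forces the a priori use of the uniform higher-order estimate on $\psi^\alpha$, together with the assumption $\psi_1=0$ that reconciles the discontinuity of $\alpha\mapsto \Dt^{1-\alpha}\nabla\psi_t$ at $\alpha=1$. Once these two ingredients are in place, the argument reduces to a routine Gronwall plus limit-of-fractional-derivative computation, and the convergence in $W^{1,\infty}(0,T;H_0^1(\Om))\cap W^{2,\infty}(0,T;L^2(\Om))$ follows immediately.
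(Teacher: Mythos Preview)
Your approach is essentially identical to the paper's: derive the same difference equation, test with $\opsi_{tt}$, use the uniform $\spaceKhigh$ bound on $\psi^\alpha$ to control the nonlinear coupling terms via Gronwall, and then invoke Lemma~\ref{Lemma:Limit} with $\psi_1=0$ to make the right-hand side vanish. One small slip: since the order $1-\alpha$ tends to $0^+$, the relevant part of Lemma~\ref{Lemma:Limit} is \eqref{limitalphaL2} rather than \eqref{limitalphaL2_1} (and correspondingly $\Dt^{1-\alpha}\nabla\psi_t=\kersing_{1-\alpha}*\nabla\psi_{tt}$, not $\kersing_\alpha*\nabla\psi_{tt}$); this is precisely why the hypothesis $\psi_t(0)=0$ is needed, to kill the $w(0)$ term in \eqref{limitalphaL2}.
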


\section{Reformulation of linear models as wave equations with memory} \label{Sec:WaveEq_Memory}
By neglecting the nonlinear terms in the equations given in Table~\ref{table:fJMGT}, we arrive at their linear counterparts, which are listed separately in Table~\ref{table:fMGT}. A possible idea to facilitate the linear analysis, which we wish to explore here, is to re-formulate these equations in terms of 
$$z=\tau^\gamma \Dt^\gamma\psi+\psi,$$ 
where $\gamma=1$ in case of fMGT III and $\gamma=\alpha$ otherwise. These linear models can be rewritten as second-order wave equations with memory
\[
z_{tt}-c^2\Delta z -\delta \Dt^\beta\D \psi = f
\]
where $\beta=2-\alpha$ for the fMGT III equation; otherwise it is given by \eqref{def_beta}. By using the Mittag-Leffler functions $E_{\gamma,\gamma}$ and $E_{\gamma,1}$ we can express $\psi$ as
\begin{equation}\label{psifromz}
\begin{aligned}
\psi(t)=&\, {E_{\gamma,1}(-(\tfrac{t}{\tau})^{\gamma})\psi_0+}
\tau^{-\gamma}\int_0^t(t-s)^{\gamma-1}E_{\gamma,\gamma}(-(\tfrac{t-s}{\tau})^{\gamma})z(s)\ds\\
=&\,{E_{\gamma,1}(-(\tfrac{t}{\tau})^{\gamma})\psi_0+}\kerML_\gamma*z.
\end{aligned}
\end{equation}
with $\kerML_\gamma$ as in \eqref{kalpha} below.
That is, 
$$
\Dt^\beta\psi = \Dt^{\beta-\gamma}\Dt^\gamma\psi = \tau^{-\gamma}\Dt^{\beta-\gamma} (z-\psi)= 
\tau^{-\gamma}\Dt^{\beta-\gamma} \left( z-\kerML_\gamma*z {-E_{\gamma,1}(-(\tfrac{t}{\tau})^{\gamma})\psi_0}\right).
$$
The $z$ forms of each of the linear models are also listed in Table~\ref{table:fMGT}, where \[\tilde{f}=f-\delta\tau^{-\gamma}\Dt^{\beta-\gamma} E_{\gamma,1}(-(\tfrac{t}{\tau})^{\gamma})\D\psi_0\] for the fMGT III equation. Note that for $\delta=0$ they are all the same and their analysis can be performed as in Section~\ref{Sec:LinearAnalysis}. We thus focus here on the more challenging case of $\delta>0$.

\subsection{Analysis of the fMGT II equation with $\boldsymbol{\sigma=0}$} \label{Sec:LinearAnalysis}
We carry out an analysis of the fMGT II equation
\[
\tau^\alpha \Dt^{2+\alpha}\psi + \psi_{tt} - c^2\Delta\psi -\tau^\alpha c^2\Dt^\alpha\Delta \psi - \delta \Dt^\alpha \Delta \psi = f.
\]
It can be rewritten in terms of \[z=\tau^\alpha \Dt^\alpha\psi+\psi\] as
\begin{equation}\label{zform_fMGT4}
z_{tt}-(c^2+\tfrac{\delta}{\tau^\alpha})\Delta z + \tfrac{\delta}{\tau^\alpha} \, \kerML_\alpha*\Delta z =f
{-\delta\tau^{-\alpha} E_{\alpha,1}(-(\tfrac{t}{\tau})^{\alpha})\D\psi_0}
\end{equation}
with the kernel function
\begin{equation}\label{kalpha}
\kerML_\alpha(t)= \tau^{-\alpha}t^{\alpha-1}E_{\alpha,\alpha}(-(\tfrac{t}{\tau})^{\alpha}) = - \ddt E_{\alpha,1}(-(\tfrac{t}{\tau}^\alpha)).
\end{equation}
We observe that this kernel has the following properties:
\begin{equation}\label{kerprop}
\kerML_\alpha(t)\geq0\,, \quad 
\lim_{t\to0+} \kerML_\alpha(t) = +\infty\,, \quad 
\int_0^\infty \kerML_\alpha(t)\dt =1\,, \quad 
\kerML_\alpha'(t)\leq 0.
\end{equation}
\begin{table}[h]
	\captionsetup{width=.9\linewidth}
	\begin{center} \small
		\begin{tabular}{|m{1.14cm}||m{10.6cm}|}
			\hline \vspace*{2mm}
			{fMGT} &  \vspace*{2mm} \textbf{Linear time-fractional acoustic equations} \\[6pt]
			\Xhline{2\arrayrulewidth} \hline \vspace*{4mm}
			\centering	\hphantom{I}   &  $(1+\tau^\alpha \Dal)(\psi_{tt}-c^2 \Delta \psi)- \delta \D\psi_{t}=f$ \\[2mm]	\Xhline{0.02\arrayrulewidth} \vspace*{5mm}
			\centering	$z$ form  &  {$z_{tt}-c^2\Delta z -\dfrac{\delta}{\tau^\alpha}\Dt^{1-\alpha}\Delta z+ \dfrac{\delta}{\tau^\alpha} \displaystyle \int_0^t \kernel(t-s) \Delta z \ds=\tilde{f}$, \ \ $\kernel=\ddt(\kersing_{1-\alpha}*\kerML_\alpha)$} \\[5mm]
			\hline \hline \vspace*{3mm}	
			\centering	I  &  $(1+\tau^\alpha \Dal)(\psi_{tt}-c^2 \Delta \psi)- \delta \Doal \D\psi_{t}=f$\\[2mm]	\Xhline{0.02\arrayrulewidth} \vspace*{5mm}
			\centering	$z$ form   &  {$z_{tt}-c^2\Delta z -\dfrac{\delta}{\tau^\alpha}\Dt^{2-2\alpha}\Delta z+ \dfrac{\delta}{\tau^\alpha} \displaystyle \int_0^t \kernel(t-s) \Delta z \ds=\tilde{f}$, \ \ $\kernel=\ddt(\kersing_{2(1-\alpha)}*\kerML_\alpha)$}\\[5mm]
			\hline \hline \vspace*{3mm}
			\centering	II   &  $(1+\tau^\alpha \Dal)(\psi_{tt}-c^2 \Delta \psi)-\delta \Dal \D\psi=f$ \\[2mm]	\Xhline{0.02\arrayrulewidth} \vspace*{5mm}
			\centering	$z$ form   &  $z_{tt}-\left(c^2+\dfrac{\delta}{\tau^\alpha}\right)\Delta z + \dfrac{\delta}{\tau^\alpha} \displaystyle \int_0^t \kernel(t-s) \Delta z \ds=f$, \ \ $\kernel=\kerML_\alpha$\\[4mm]
			\hline \hline \vspace*{3mm}
			\centering	III &  $(1+\tau \partial_t)(\psi_{tt}-c^2 \Delta \psi)- \delta \Doal\D\psi_{t}=f$ \\[2mm]	\Xhline{0.02\arrayrulewidth} \vspace*{5mm}	
			\centering	$z$ form  &  {$z_{tt}-c^2\Delta z -\dfrac{\delta}{\tau}\Dt^{1-\alpha}\Delta z+ \dfrac{\delta}{\tau} \displaystyle \int_0^t \kernel(t-s) \Delta z\ds =\tilde{f}$, \ \ $\kernel=\ddt(\kersing_{1-\alpha}*\kerML_1)$} \\[5mm]
			\hline 		\end{tabular}
	\end{center}
	\caption{\small Linear time-fractional models with $\kerML_\alpha(t)= \tau^{-\alpha}t^{\alpha-1}E_{\alpha,\alpha}(-(\tfrac{t}{\tau})^{\alpha})$, 
		$\kerML_1(t)=\tau^{-1}\exp(-(\tfrac{t}{\tau}))$, and $\kersing_\gamma(t)=t^{-\gamma}$.
		\label{table:fMGT}}
\end{table}
~\\
Moreover, since the function $t\mapsto  E_{\alpha,1}(-(\tfrac{t}{\tau}^\alpha))$ is completely monotone, by Schoenberg's theorem \cite[Theorem 7.13]{wendland2004} we conclude that the kernel $\kerML_\alpha$ itself and also the kernel $t\mapsto \int_t^\infty \kerML_\alpha(s)\ds$ is positive definite.
Therefore, the next result is a straightforward consequence of~\cite[Theorem 4.5]{cannarsaSforza2008}, where the regularity of $\psi$ as the unique solution of 
	\begin{equation} \label{ODEpsifromz}
\tau^\alpha \Dt^\alpha\psi+\psi=z
	\end{equation}
a.e. in $(0,T)$ with $\psi(0)=\psi_0$ follows from the fact that  $\textup{I}^\alpha$ maps $L^\infty(0,T)$ to $C^{0,\alpha}(0,T)$; see~\cite[Corollary 2, p.\ 56]{samko1993fractional}.
\begin{proposition}[Well-posedness of the fMGT II equation] \label{Prop:fMGT_4} 
Let $\alpha \in (0,1]$. Given $f \in L^1(0,T; L^2(\Om))$, $(z_0,z_1) \in H_0^1(\Om)\times L^2(\Om)$, and $\psi_0\in\Honetwo$, there exists a unique mild solution 
\[z\in W^{2,\infty}(0,T; H^{-1}(\Om))\cap W^{1,\infty}(0,T; L^2(\Om))\cap L^\infty(0,T; H_0^1(\Om))\] of \eqref{zform_fMGT4} with $(z, z_t)\vert_{t=0}=(z_0, z_1)$.
Correspondingly, for inital data
\[
(\psi_0, \psi_1, \psi_2) \in 
\begin{cases}
\Honetwo\times \{0\}\times H^{-1}(\Om) \hphantom{~} \ \mbox{ if }\alpha<1, \\
\Honetwo\times H_0^1(\Om)\times L^2(\Om) \ \mbox{ if }\alpha=1,
\end{cases}
\] 
there exists a unique solution 
\[\psi
\in C^{2,\alpha}(0,T; H^{-1}(\Om))\cap C^{1,\alpha}(0,T; L^2(\Om))\cap C^{0,\alpha}(0,T; H_0^1(\Om))\] 
according to \eqref{psifromz}; that is, the unique solution of \eqref{ODEpsifromz} with $\psi(0)=\psi_0$ and thus of 
	\begin{equation} \label{fMGT4}
	\begin{aligned}
	\begin{multlined}[t] \langle \tau^\alpha \Dt^{2+\alpha}\psi + \psi_{tt}, v\rangle_{H^{-1}, H_0^1} \\+ \prodLtwo{c^2\nabla\psi +\tau^\alpha c^2\nabla \Dt^\alpha\psi +\delta \Dt^\alpha \nabla \psi}{\nabla v} = \prodLtwo{f}{v}, \end{multlined}
	\end{aligned}
	\end{equation}
	a.e. in $(0,T)$, with $(\psi, \psi_t, \psi_{tt})\vert_{t=0}=(\psi_0, \psi_1, \psi_2)$.
Furthermore, the solution satisfies the estimate
	\begin{equation} \label{energy_est1_fMGT4}
	\begin{aligned}
&\begin{multlined}[t]\|\tau^\alpha D_{tt}\Dt^\alpha \psi+\psi_{tt}\|^2_{L^\infty(H^{-1})}
+\|\tau^\alpha D_{t}\Dt^\alpha\psi+\psi_t\|_{\LinfLtwo}^2 \\ \hspace*{6cm}+ \|\tau^\alpha \Dt^\alpha\nabla\psi+\nabla\psi\|_{\LinfLtwo}^2 \end{multlined} \\
\lesssim&\,  \begin{cases}
\|\psi_2\|_{H^{-1}(\Omega)}^2 + \|\nabla\psi_0\|_{L^2(\Omega)}^2
+\|f\|_{\LoneLtwo}^2 \hphantom{\hspace*{2.8cm}} &\mbox{ for }\alpha<1,\\
\|\tau\psi_2+\psi_1\|_{L^2(\Omega)}^2 +\|\tau\nabla\psi_1+\nabla\psi_0\|_{L^2(\Omega)}^2
+\|f\|_{\LoneLtwo}^2\quad & \mbox{ for }\alpha=1.
\end{cases}
	\end{aligned}
	\end{equation}	
\end{proposition}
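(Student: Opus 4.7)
My plan is to exploit the reformulation \eqref{zform_fMGT4} of the fMGT II equation as a second-order wave equation with memory, together with the already-noted properties \eqref{kerprop} of $\kerML_\alpha$, and then recover $\psi$ from $z$ through the fractional ODE \eqref{ODEpsifromz}. The statement explicitly points to \cite[Theorem 4.5]{cannarsaSforza2008} for the well-posedness of the $z$-problem, so the task reduces to verifying that the hypotheses of that theorem are met in our setting and to translating its conclusion back to the $\psi$ variable.

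First I would check the kernel hypotheses for the Cannarsa--Sforza framework. The wave operator $z_{tt}-(c^2+\delta/\tau^\alpha)\Delta z$ is the standard one with positive coefficient (since $c^2+\delta/\tau^\alpha>0$), and the memory term carries the negative sign $+\tfrac{\delta}{\tau^\alpha}\kerML_\alpha*\Delta z$ which corresponds to a resolvent-type memory. The crucial structural properties are those collected in \eqref{kerprop}: nonnegativity, singular behaviour at $0^+$, integrability with $\int_0^\infty \kerML_\alpha=1$, and monotone decrease. Since $t\mapsto E_{\alpha,1}(-(t/\tau)^\alpha)$ is completely monotone (by a standard property of Mittag--Leffler functions, cf.\ \cite{kubica2020time}), Schoenberg's theorem \cite[Theorem 7.13]{wendland2004} yields positive definiteness of both $\kerML_\alpha$ and its tail $t\mapsto\int_t^\infty\kerML_\alpha(s)\ds$, which is exactly the sign condition required to make the energy method for the memory term work. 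The right-hand side of \eqref{zform_fMGT4} contributes the extra term $-\delta\tau^{-\alpha}E_{\alpha,1}(-(t/\tau)^\alpha)\Delta\psi_0$, which is uniformly bounded on $(0,T)$ by $\delta\tau^{-\alpha}\|\Delta\psi_0\|_{L^2}$, hence absorbed into the $L^1(0,T;L^2(\Om))$ forcing. Applied to the data $(z_0,z_1)\in H_0^1(\Om)\times L^2(\Om)$, the Cannarsa--Sforza theorem produces a unique mild solution $z\in W^{2,\infty}(0,T;H^{-1})\cap W^{1,\infty}(0,T;L^2)\cap L^\infty(0,T;H_0^1)$ together with the natural energy bound by $\|z_1\|_{L^2}^2+\|\nabla z_0\|_{L^2}^2+(\|f\|_{L^1(L^2)}+\delta\tau^{-\alpha}T\|\Delta\psi_0\|_{L^2})^2$.

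Next I would recover $\psi$. For $\alpha<1$, we have $\psi_1=0$, so the compatibility $z_0=\psi_0$ and $z_1=\tau^\alpha (\Dt^\alpha\psi)(0)+\psi_1=0$ fix the initial data of the $z$-problem in terms of $(\psi_0,\psi_1,\psi_2)$; equivalently $\psi_2$ is read off from $z_{tt}(0)=\tau^\alpha \Dt^{2+\alpha}\psi(0)+\psi_2$ in $H^{-1}$. Given such $z$, formula \eqref{psifromz} then defines $\psi$, and one verifies directly that it is the unique solution of \eqref{ODEpsifromz} with $\psi(0)=\psi_0$, using that $\textup{I}^\alpha:L^\infty(0,T)\to C^{0,\alpha}(0,T)$ (cf.\ \cite[Cor.~2, p.~56]{samko1993fractional}). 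Since each component of $z$ sits in $L^\infty$ in time with values in the corresponding Sobolev space ($H^{-1}$, $L^2$, and $H^1_0$ for $z_{tt}$, $z_t$, $z$ respectively), applying $\textup{I}^\alpha$ componentwise yields $\psi\in C^{2,\alpha}(0,T;H^{-1})\cap C^{1,\alpha}(0,T;L^2)\cap C^{0,\alpha}(0,T;H^1_0)$, as claimed. For $\alpha=1$, the ODE degenerates to $\tau\psi_t+\psi=z$ with $\psi(0)=\psi_0$, which is solved by $\psi(t)=e^{-t/\tau}\psi_0+\tau^{-1}\int_0^t e^{-(t-s)/\tau}z(s)\ds$, and the translation of the $z$ initial data becomes $z_0=\tau\psi_1+\psi_0$, $z_1=\tau\psi_2+\psi_1$.

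Finally, \eqref{energy_est1_fMGT4} follows at once from the standard energy estimate for the memory wave equation: testing \eqref{zform_fMGT4} with $z_t$ and using nonnegativity of $\kerML_\alpha$ together with positive definiteness of its tail to control the memory contribution gives $\|z_t\|_{L^\infty L^2}^2+\|\nabla z\|_{L^\infty L^2}^2\lesssim \|z_1\|_{L^2}^2+\|\nabla z_0\|_{L^2}^2+\|\tilde f\|_{L^1(L^2)}^2$, and the bound on $z_{tt}$ in $H^{-1}$ comes directly from the PDE itself. Substituting the identification $z=\tau^\alpha\Dt^\alpha\psi+\psi$ and translating the $z$-initial data gives the two right-hand side cases in \eqref{energy_est1_fMGT4}. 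The main technical obstacle I anticipate is the careful check that the forcing generated by $\psi_0$ and the non-standard translation between $(\psi_0,\psi_1,\psi_2)$ and $(z_0,z_1)$ really does lie in the regularity class where Cannarsa--Sforza applies; for $\alpha<1$ the assumption $\psi_1=0$ is precisely what makes $z_1\in L^2$ without requiring additional smoothness of $\psi_2$, while for $\alpha=1$ the higher data regularity is what allows the stronger initial energy bound.
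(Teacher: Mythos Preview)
Your proposal is correct and follows exactly the paper's approach: invoke \cite[Theorem 4.5]{cannarsaSforza2008} for the $z$-equation after verifying the kernel properties \eqref{kerprop} and positive definiteness via Schoenberg, then recover $\psi$ through \eqref{psifromz}/\eqref{ODEpsifromz} using the mapping $\textup{I}^\alpha:L^\infty\to C^{0,\alpha}$. One small slip: your formula ``$z_1=\tau^\alpha(\Dt^\alpha\psi)(0)+\psi_1$'' is the expression for $z_0$, not $z_t(0)$; the correct identity is $z_t=\tau^\alpha\Dt^{1+\alpha}\psi+\psi_t$ (once $\psi_1=0$ kills the singular $p^{-\alpha}$ term), which still gives $z_1=0$ as you claim.
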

Note that the condition $\psi_1=0$ in case $\alpha<1$ is enforced by the singularity at zero in the identity 
\[z_t=\Dt\left(\tau^\alpha \Dt^\alpha\psi+\psi\right)= \tau^\alpha \left(\Dt^{1+\alpha}\psi + \psi_t(0)\frac{t^{-\alpha}}{\Gamma(1-\alpha)}\right) + \psi_t\,.
\]
Spatially higher-order regularity can be obtained with more regular initial data and the source term by using the multiplier $(-\D)^m z_t$ in place of $z_t$ (which led to the energy estimate \eqref{energy_est1_fMGT4}). To study the limiting behavior, we will make use of the following resulting estimate in case $\alpha=1$, $m=1$:
\begin{equation} \label{energy_est2_fMGT4_alpha1}
\begin{aligned}
&\|\tau \psi_{ttt}+\psi_{tt}\|^2_{L^\infty(L^2)}
+\|\tau \nabla\psi_{tt}+\nabla\psi_t\|^2_{L^\infty(L^2)} + \|\tau\D\psi_t+\D\psi\|^2_{\LinfLtwo}\\
\lesssim&\,  \|\tau\nabla\psi_2+\nabla\psi_1\|_{L^2(\Omega)}^2 +\|\tau\D\psi_1+\D\psi_0\|_{L^2(\Omega)}^2
+\|\nabla f\|_{\LoneLtwo}^2.
\end{aligned}
\end{equation}	
\begin{remark}[On the $z$-form of the other linear models]\label{rem:fMGT123}
As already mentioned, a reformulation of the type \eqref{zform_fMGT4} is available for the other linear models \textup{fMGT}, \textup{fMGT I}, and \textup{fMGT III} as well; see Table~\ref{table:fMGT}. However, it is not clear whether properties \eqref{kerprop} still hold for the corresponding kernels. \\
\indent Due to the term $\Dt^{\beta-\gamma} ( z-\kerML_\gamma*z)$ present in these models with \[\epsilon=\beta-\gamma>0\in\{1-\alpha,2-2\alpha\},\] one might consider using the adjoint of $(\Dt^\epsilon)^{-1}$ in the multiplier; that is, test with $((\Dt^\epsilon)^{-1})^*z_t$ in place of $z_t$. Indeed, this leads to tractable terms
\begin{equation}\label{testIepsilon}
\begin{aligned}
&-\tfrac{\delta}{\tau^\alpha}\int_0^t\prodLtwo{\Dt^{\epsilon} \Delta( z-\kerML_\gamma*z)}{((\Dt^\epsilon)^{-1})^*z_t}\dt
= \tfrac{\delta}{\tau^\alpha}\int_0^t\prodLtwo{\nabla( z-\kerML_\gamma*z)}{z_t}\dt,\\
&\int_0^t\prodLtwo{z_{tt}}{((\Dt^\epsilon)^{-1})^*z_t}\dt 
= \int_0^t\prodLtwo{(\Dt^\epsilon)^{-1} \Dt^\epsilon \textup{I}^{\epsilon} z_{tt}}{z_t}\dt
= \int_0^t\prodLtwo{\Dt^{2-\epsilon} z}{z_t}\dt\,.
\end{aligned}
\end{equation}
However, the $c^2$ term does not appear to be amenable to useful estimates since in 
\[
- c^2\int_0^t\prodLtwo{\D z}{((\Dt^\epsilon)^{-1})^*z_t}\dt
= c^2\int_0^t\prodLtwo{\nabla \textup{I}^\epsilon }{\nabla z_t}\dt
\]
the difference between the time differentiation orders of the two factors is $1-(-\epsilon)=1+\epsilon>1$, which leads to an adverse sign, while the norm of $\nabla z_t$ is not controllable by any of the other left-hand side terms resulting from \eqref{testIepsilon}.
\end{remark}
\subsection{Limiting behavior of the fMGT II equation} 
For $\alpha\in(0,1]$, we denote by $\psi^\alpha$ the solution according to Proposition~\ref{Prop:fMGT_4} under the assumptions \[(\psi_0, \psi_1, \psi_2) \in \Honetwo\times \{0\}\times H_0^1(\Om),\quad f, \nabla f\in L^1(0,T; L^2(\Om)).\]
Let $\psi$ be the solution of the corresponding MGT equation. Note that then the corresponding functions $z^\alpha$ and $z$ satisfy the initial conditions 
\[z^\alpha(0)=z(0)=\psi_0,\quad z^\alpha_t(0)=0, \quad z_t(0)=\tau\psi_2.\] Hence to achieve compatibility, besides $\psi_1=0$ (see the proof of Proposition~\ref{Prop:fMGT_4}), we also have to assume $\psi_2=0$.
Then the difference $\overline{z}=z^\alpha-z$ solves 
\begin{equation} \label{fMGT_4_diff}
\begin{aligned}
&\overline{z}_{tt}-(c^2+\tfrac{\delta}{\tau^\alpha})\D \overline{z} + \delta \, \kerML_\alpha*\D \overline{z} 
&=\delta \left( (\tau^{-\alpha}-\tau^{-1})\D z - (\kerML_\alpha-\kerML_1)*\D z\right) 
\end{aligned}
\end{equation}
with homogeneous initial data $\overline{z}(0)=\overline{z}_t(0)=0$. Testing with $\overline{z}_t$ leads to 
\begin{equation}\label{fMGT_4_estdiffz}
\|\overline{z}_t\|_{L^\infty(L^2)}^2 + 2c^2 \|\nabla \overline{z}\|_{L^\infty(L^2)}^2	
\leq 4 \delta\Bigl(|\tau^{-\alpha}-\tau^{-1}| + \|\kerML_\alpha-\kerML_1\|_{L^1(0,T)}\Bigr) \|\D z\|_{\LoneLtwo}^2,
\end{equation}
where we can estimate $\|\D z\|_{\LoneLtwo}^2$ according to \eqref{energy_est2_fMGT4_alpha1} and 
\[
\|\kerML_\alpha-\kerML_1\|_{L^1(0,T)}
=\int_0^T \left| \tau^{-\alpha}t^{\alpha-1}E_{\alpha,\alpha}(-(\tfrac{t}{\tau})^{\alpha}) 
- \tau^{-1}\exp(-(\tfrac{t}{\tau}))\right|\dt\to0\ \mbox{ as }\alpha\to1^{-},
\]
by Lebesgue's Dominated Convergence theorem.
Thus, with 
\[
\begin{aligned}
\psi^\alpha-\psi &= \kerML_\alpha*z^\alpha-\kerML_1*z =(\kerML_\alpha-\kerML_1)*z+\kerML_\alpha*\overline{z}\\
(\psi^\alpha-\psi)_t(t) &=(\kerML_\alpha-\kerML_1)(t)\,z^\alpha(0)+ ((\kerML_\alpha-\kerML_1)*z_t)(t)
+\kerML_\alpha(t)\overline{z}(0)+(\kerML_\alpha*\overline{z}_t)(t)\\
\end{aligned}
\]
we obtain, using $\overline{z}(0)=0$, $z^\alpha(0)=\psi_0$, and Young's Convolution inequality,
\begin{equation} 
\begin{aligned}
&\|(\psi^\alpha-\psi)_t\|_{L^\infty(L^2)}^2 + \|\nabla (\psi^\alpha-\psi)\|_{L^\infty(L^2)}^2\\
\lesssim&\,
\|\kerML_\alpha-\kerML_1\|_{L^p(0,T)}^2\|\psi_0\|_{L^2(\Om)}^2 
+ \|\kerML_\alpha-\kerML_1\|_{L^1(0,T)}^2\|z_t\|_{L^p(L^2)}^2
+\|\kerML_\alpha\|_{L^1(0,T)}^2 \|\overline{z}_t\|_{L^p(L^2)}^2\\
&\quad+ \|\kerML_\alpha-\kerML_1\|_{L^1(0,T)}^2\|\nabla z_t\|_{L^\infty(L^2)}^2
+ \|\kerML_\alpha\|_{L^1(0,T)}^2 \|\nabla\overline{z}\|_{L^\infty(L^2)}^2\,,
\end{aligned}
\end{equation}
where we can use Proposition~\ref{Prop:fMGT_4} and estimate \eqref{fMGT_4_estdiffz} to further bound the right-hand side terms.
Note that since \[\|\kerML_\alpha-\kerML_1\|_{L^\infty(0,T)}=\lim_{t\to0}|\kerML_\alpha-\kerML_1|(t)=+\infty,\] we only get an estimate of $\|(\psi^\alpha-\psi)_t\|_{L^\infty(L^2)}^2$ if we additionally assume $\psi_0=0$. Furthermore,
\[
\begin{aligned}
\|\kerML_\alpha-\kerML_1\|_{L^p(0,T)}
\leq& 
|\tau^{-\alpha}-\tau^{-1}|\|\cdot^{\alpha-1}E_{\alpha,\alpha}(-(\tfrac{\cdot}{\tau})^{\alpha})\|_{L^p(0,T)}\\
&+\tau^{-1}\|\cdot^{\alpha-1}-1\|_{L^p(0,T)}\|E_{\alpha,\alpha}(-(\tfrac{\cdot}{\tau})^{\alpha})\|_{L^\infty(0,T)}\\
&+\tau^{-1}\|E_{\alpha,\alpha}(-(\tfrac{\cdot}{\tau})^{\alpha})-\exp(-(\tfrac{\cdot}{\tau}))\|_{L^p(0,T)}
\to0 \ \ \mbox{ as }\alpha\to1^{-}\,,
\end{aligned}
\]
where the only critical term  is the one containing the singularity, $\int_0 |t^{-(1-\alpha)} -1|^p\dt$. Its convergence to zero follows from Lebesgue's Dominated Convergence theorem with the $L^1$ bound \[2^{p-1}(t^{-p(1-\alpha_0)}+1)\] for $1\geq\alpha\geq\alpha_0>1-\frac{1}{p}$. Thus we arrive at the following result. 
\begin{proposition} Let $\psi_0 \in H_0^1(\Om)\cap H^2(\Om)$ and $f$, $\nabla f\in L^1(0,T; L^2(\Om))$. Further, let $\{\psi^\alpha\}_{\alpha \in (0,1)}$ be the family of solutions to the \textup{fMGT II} equation and let $\psi$ solve the corresponding equation with $\alpha=1$, where the initial data is in both cases given by \[(\psi^\alpha, \psi^\alpha_t, \psi^\alpha_{tt})\vert_{t=0}=(\psi, \psi_t, \psi_{tt})\vert_{t=0}=(\psi_0,0,0).\] Then for any $p\in[1,\infty)$, $\psi^{\alpha}$ converges to $\psi$ in the $W^{1,p}(0,T;L^2(\Om))\cap L^\infty(0,T;H_0^1(\Om))$ norm as $\alpha \rightarrow 1^{-}$.
If additionally $\psi_0=0$, then we also have convergence in $W^{1,\infty}(0,T;L^2(\Om))\cap L^\infty(0,T;H_0^1(\Om))$.
\end{proposition}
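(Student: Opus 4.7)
The plan is to exploit the reformulation of the fMGT II equation as a second-order wave equation with memory in the variable $z^\alpha = \tau^\alpha \Dt^\alpha \psi^\alpha + \psi^\alpha$ (with $z = \tau \psi_t + \psi$ in the limit), as developed in \eqref{zform_fMGT4} and Table~\ref{table:fMGT}. Since the assumptions $\psi_1 = \psi_2 = 0$ ensure the compatibility $(z^\alpha(0), z^\alpha_t(0)) = (z(0), z_t(0)) = (\psi_0, 0)$, the difference $\overline{z} = z^\alpha - z$ satisfies equation \eqref{fMGT_4_diff} with vanishing initial data and a right-hand side proportional to $\delta\bigl((\tau^{-\alpha} - \tau^{-1})\Delta z - (\kerML_\alpha - \kerML_1) \ast \Delta z\bigr)$. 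Testing with $\overline{z}_t$---the natural energy multiplier, since the memory term is absorbed via Young's convolution inequality thanks to $\|\kerML_\alpha\|_{L^1} = 1$ from \eqref{kerprop}---yields the bound \eqref{fMGT_4_estdiffz}, whose right-hand side is quantified by $\|\Delta z\|_{L^1(L^2)}$. This latter norm is controlled uniformly in $\alpha$ via a higher-regularity variant of Proposition~\ref{Prop:fMGT_4}, namely the estimate \eqref{energy_est2_fMGT4_alpha1} applicable under $\psi_0 \in H_0^1 \cap H^2$ and $\nabla f \in L^1(L^2)$.

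The second step transfers convergence from $\overline{z}$ to $\overline{\psi} = \psi^\alpha - \psi$ via the recovery formula \eqref{psifromz}, writing
\[
\psi^\alpha - \psi = (\kerML_\alpha - \kerML_1) \ast z + \kerML_\alpha \ast \overline{z}
\]
and differentiating once in time while carefully tracking the pointwise boundary contributions $(\kerML_\alpha - \kerML_1)(t)\,z^\alpha(0)$ and $\kerML_\alpha(t)\,\overline{z}(0)$; the latter vanishes since $\overline{z}(0) = 0$, while the former equals $(\kerML_\alpha - \kerML_1)(t)\psi_0$. Young's convolution inequality then reduces matters to three ingredients: the energy bound on $\overline{z}$ above, uniform $\alpha$-bounds on $z$ from Proposition~\ref{Prop:fMGT_4}, and the convergence $\|\kerML_\alpha - \kerML_1\|_{L^p(0,T)} \to 0$ for $p \in [1,\infty)$.

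The main technical obstacle is this $L^p$ convergence of the Mittag--Leffler kernels near the singularity at $t=0$, since $\kerML_\alpha(t) = \tau^{-\alpha} t^{\alpha-1} E_{\alpha,\alpha}(-(t/\tau)^\alpha)$. I would split $\kerML_\alpha - \kerML_1$ into three pieces, isolating $\tau^{-\alpha} - \tau^{-1}$, the singular factor $t^{\alpha-1} - 1$, and the smooth difference $E_{\alpha,\alpha}(-(t/\tau)^\alpha) - \exp(-t/\tau)$, and invoke Lebesgue's Dominated Convergence theorem on each. The delicate piece is $\|t^{\alpha-1} - 1\|_{L^p(0,T)} \to 0$, which I would dominate by $2^{p-1}(t^{-p(1-\alpha_0)} + 1) \in L^1(0,T)$ for any $\alpha_0 > 1 - 1/p$ and $\alpha \in [\alpha_0, 1]$; this integrability threshold is precisely what forces $p < \infty$ in general. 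Finally, the strengthening to $W^{1,\infty}(0,T; L^2(\Om))$ under the extra hypothesis $\psi_0 = 0$ is immediate: the only obstruction to pointwise-in-time control of $(\psi^\alpha - \psi)_t$ is the term $(\kerML_\alpha - \kerML_1)(t)\psi_0$, which blows up as $t \to 0^+$ (since $\|\kerML_\alpha - \kerML_1\|_{L^\infty(0,T)} = +\infty$) unless $\psi_0 = 0$. The spatial $L^\infty(0,T; H_0^1(\Om))$ convergence comes for free, since $\overline{z}_t$ already controls $\|\nabla \overline{z}\|_{L^\infty(L^2)}$ in the energy estimate.
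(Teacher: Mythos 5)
Your proposal follows the paper's proof essentially verbatim: the same $z$-reformulation with the compatibility requirement $\psi_1=\psi_2=0$, the same difference equation \eqref{fMGT_4_diff} tested with $\overline{z}_t$ to obtain \eqref{fMGT_4_estdiffz} with $\|\D z\|_{\LoneLtwo}$ controlled by the higher-order estimate \eqref{energy_est2_fMGT4_alpha1}, the same recovery formula \eqref{psifromz} with Young's convolution inequality and the vanishing boundary term $\kerML_\alpha(t)\overline{z}(0)$, and the same three-way splitting of $\kerML_\alpha-\kerML_1$ handled by dominated convergence with the dominating function $2^{p-1}(t^{-p(1-\alpha_0)}+1)$ for $\alpha_0>1-\tfrac1p$, including the observation that $\psi_0=0$ is needed for $W^{1,\infty}$ because $\|\kerML_\alpha-\kerML_1\|_{L^\infty(0,T)}=+\infty$. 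The only loose aside is your claim that the memory term is absorbed ``via Young's convolution inequality''—testing with $\overline{z}_t$ pairs it with a quantity that is not directly controlled, and the paper's estimate \eqref{fMGT_4_estdiffz} instead rests on the kernel properties \eqref{kerprop} (nonnegativity, monotonicity, unit $L^1$ mass and positive definiteness); since you ultimately invoke the established estimate \eqref{fMGT_4_estdiffz}, this does not affect the correctness of the argument.
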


\section*{Conclusion and Outlook}
In this work, based on physical balance and constitutive laws, we have derived four different fractional-order versions of a well-known third-order in time model of nonlinear acoustics, the JMGT equation. The fractional-order of differentiation $\alpha\in(0,1]$ (sometimes restricted to $\alpha\in(1/2,1]$) appears as a parameter in each of these models. We have studied the well-posedness of these equations and their linearizations in appropriate spaces and justified the respective limits as $\alpha\to1^{-}$, leading to the (J)MGT equation. \\
\indent Formally taking the limit of these equations as the relaxation time $\tau$ vanishes would lead to time-fractional second-order acoustic equations, which are of independent interest as well. An analysis of this limit will be the subject of future research.
\section*{Acknowledgments}
The work of the first author was supported by the Austrian Science Fund {\sc fwf} under the grants P30054 and DOC 78.
\end{document}